\newtheorem{thm}{Theorem}
\newtheorem{cor}[thm]{Corollary}
\newtheorem{prop}[thm]{Proposition}
\newtheorem{lem}[thm]{Lemma}
\theoremstyle{remark}
\newtheorem{remark}[thm]{Remark}
\newtheorem{exam}[thm]{Example}
\theoremstyle{definition}
\newtheorem{defn}[thm]{Definition}
\renewcommand{\bar}{\overline}
\newcommand{\Ind}{{\rm Ind}}
\newcommand{\Irr}{{\rm Irr}\,}
\newcommand{\R}{{\mathbb{R}}}
\newcommand{\C}{{\mathbb{C}}}
\newcommand{\Q}{{\mathbb{Q}}}
\newcommand{\Z}{{\mathbb{Z}}}
\newcommand{\cG}{{\mathcal{G}}}
\newcommand{\cC}{{\mathcal{C}}}
\newcommand{\Aut}{\mathrm{Aut}\,}
\newcommand{\Sym}{\mathrm{Sym}}
\newcommand{\Alt}{{\raise 2pt\hbox{$\scriptstyle\bigwedge$}}}
\definecolor{myblue}{RGB}{80,80,160}
\definecolor{mygreen}{RGB}{80,160,80}
\newdimen\nodeSize
\newdimen\nodeDist
\tikzset{
	position/.style args={#1:#2 from #3}{
		at=(#3.#1), anchor=#1+180, shift=(#1:#2)
	}
}
\title{On counting centralizer subgroups of symmetric groups}
	\author{Zhipeng Lu}
	\address{Mathematisches Institut\\
		Georg-August Universit\"{a}t G\"{o}ttingen\\
		Bunsenstra\ss e 3-5\\
		D-37073 G\"{o}ttingen\\
		Germany}
	\email{Zhipeng.Lu@uni-goettingen.de}
\begin{document}
	\maketitle
	\section*{Introduction}\label{section-introduction}
	The paper is motivated by a MathOverflow question of Harald Helfgott \cite{mathoverflow}, which relates to growth in transitive permutation groups in his work \cite{Helfgott-St. Andews}. For any positive integer $m$, let $S_{2m}$ be the symmetric group on the symbols $\{1,2,\cdots,2m\}$ and $H$ be the subgroup of $S_{2m}$ consisting of permutations preserving the partition $\{1,2\},\{3,4\},\cdots,\{2m-1,2m\}$, or equivalently $H=C((1\ 2)(3\ 4)\cdots(2m-1\ 2m))$, the centralizer subgroup. Call $g\in S_{2m}$ \textit{good} if $|H\cap gHg^{-1}|=m^{O(1)}$; call it \textit{bad} otherwise. Then Helfgott wonders about the structure of good elements and postulated that the good permutations have density $1$ in $S_{2m}$. There seems to be a fair share of good permutations in $S_{2m}$, for example if the cycle decomposition of $g$ does not contain ``too many" cycles of the same length, $g$ may be checked good. The paper contributes to studying the structure of good elements, and shows that the good permutations have density zero as a negative answer to Helfgott's postulation.
	
	To proceed, we first clarify the structure of $H\cap gHg^{-1}$ for any $g\in S_{2m}$ in section \ref{section-structures of double cosets}. It turns out that the isomorphism class of $H\cap gHg^{-1}$ depends on the double coset $HgH$ and moreover
	\begin{thm}\label{thm-structure theorem}
		 Each $HgH$ has a representative $x\in\Sym \{2,4,\cdots,2m\}$ $\leq S_{2m}$ determined by a partition of $m$ and 
		  there is a one-to-one correspondence $H\backslash S_{2m}/H\leftrightarrow\{\text{parititions of }m\}$. Furthermore,
		for any $g\in HxH$ with $x\in\Sym\{2,4,\cdots,2m\}$ whose cycle decomposition has $r_i$ cycles of length $i$, $i=1,\cdots,k$,
		\[H\cap gHg^{-1}\simeq \bigoplus_{i=1}^kD_{i}\wr S_{r_i},\]
		where $D_i$ is the dihedral group with $2i$ elements. (For convenience we write $D_1$ for $C_2$ or $S_2$.)
	\end{thm}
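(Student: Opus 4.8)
The plan is to reduce to the case $g=x$ and then recognize $H\cap xHx^{-1}$ as the automorphism group of $\{1,\dots,2m\}$ regarded as a set acted on by a dihedral group.

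Put $\sigma=(1\ 2)(3\ 4)\cdots(2m-1\ 2m)$, so that $H=C(\sigma)$; for any $g$ we have $gHg^{-1}=C(\tau)$ with $\tau=g\sigma g^{-1}$, hence $H\cap gHg^{-1}=C(\sigma)\cap C(\tau)=C(\langle\sigma,\tau\rangle)$. If $g=h_1xh_2$ with $h_1,h_2\in H$, then $gHg^{-1}=h_1(xHx^{-1})h_1^{-1}$ and therefore $H\cap gHg^{-1}=h_1\bigl(H\cap xHx^{-1}\bigr)h_1^{-1}$, so the isomorphism type of $H\cap gHg^{-1}$ depends only on the double coset; by the classification of double cosets we may assume $g=x\in\Sym\{2,4,\dots,2m\}$ with $r_i$ cycles of length $i$. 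Writing a length-$i$ cycle of $x$ as $(2b_1\ 2b_2\ \cdots\ 2b_i)$ and conjugating $\sigma$ by it, one sees that $\tau$ contains exactly the transpositions $(2b_j-1\ \,2b_{j+1})$ with indices modulo $i$; a fixed point $2b$ of $x$ just contributes $(2b-1\ \,2b)$ to $\tau$, as in $\sigma$.

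Now set $G=\langle\sigma,\tau\rangle$. The description of $\tau$ shows that the $2i$ symbols attached to a length-$i$ cycle of $x$ form a single $G$-orbit $O$, on which $\sigma$ and $\tau$ restrict to two matchings whose union is a single $2i$-cycle. On $O$ the product $\sigma\tau$ is a product of two $i$-cycles, so it has order $i$; since $G|_O=\langle\sigma|_O,\tau|_O\rangle$ is a transitive image of the dihedral group of order $2i$ acting on $2i$ points, comparing orders forces $G|_O\cong D_i$ acting regularly, and $\ker(D_\infty\to G|_O)=\langle(st)^i\rangle$ where $D_\infty=\langle s,t\mid s^2=t^2=1\rangle$ with $s\mapsto\sigma$, $t\mapsto\tau$. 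As reflections of $D_\infty$ act without fixed points on every orbit, the kernel of $D_\infty\to G$ consists only of rotations and equals $\langle(st)^n\rangle$ with $n=\operatorname{lcm}\{i:r_i\ne0\}$; thus $G\cong D_n$, and for each $i$ with $r_i\ne0$ the stabilizer in $G$ of a point lying in a size-$2i$ orbit is the normal subgroup $K_i\trianglelefteq G$ with $G/K_i\cong D_i$, namely the image of $\langle(st)^i\rangle$. In particular all orbits of size $2i$ are isomorphic as $G$-sets to $G/K_i$, and $\{1,\dots,2m\}\cong\bigsqcup_i r_i\cdot(G/K_i)$ as a $G$-set.

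A permutation of $\{1,\dots,2m\}$ commutes with all of $G$ exactly when it is a $G$-equivariant bijection, so $H\cap xHx^{-1}=C(G)=\Aut_G\bigl(\{1,\dots,2m\}\bigr)$. For a $G$-set that is a disjoint union of $r_i$ copies of pairwise non-isomorphic transitive $G$-sets $G/K_i$ one has the standard identification $\Aut_G\bigl(\bigsqcup_i r_i\cdot(G/K_i)\bigr)\cong\bigoplus_i\Aut_G(G/K_i)\wr S_{r_i}$, and since $K_i$ is normal, $\Aut_G(G/K_i)\cong N_G(K_i)/K_i=G/K_i\cong D_i$. Hence $H\cap xHx^{-1}\cong\bigoplus_i D_i\wr S_{r_i}$ (with $D_1=C_2$), and by the reduction step the same holds for every $g\in HxH$.

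\textbf{Expected main obstacle.} The last paragraph is a formal manipulation with automorphism groups of $G$-sets; the substance is the orbit analysis in the third paragraph: verifying that each cycle of $x$ contributes exactly one orbit carrying the \emph{regular} action of $D_i$, and pinning down $G$ and its point stabilizers precisely enough that equal-sized orbits are $G$-isomorphic. I expect this bookkeeping with the pair of perfect matchings $\sigma,\tau$, together with the degenerate cases $i=1$ (doubled blocks, $D_1=C_2$) and $x=\mathrm{id}$, to be the only delicate part.
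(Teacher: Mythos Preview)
Your proof is correct and takes a genuinely different route from the paper's. The paper follows J.~P.~James and builds a $2$-regular bipartite graph $\mathcal{G}$ whose two vertex parties are the blocks $\{\{2k-1,2k\}\}$ and $\{\{g(2k-1),g(2k)\}\}$, then identifies $H\cap gHg^{-1}$ with the party-preserving automorphism group $\Aut_b(\mathcal{G})$; each length-$i$ cycle of $x$ gives a connected component that unfolds to a $2i$-gon, whence the factor $D_i$, and permuting equal-size components yields the $S_{r_i}$. You instead recognise $H\cap xHx^{-1}$ as the centralizer of the dihedral group $G=\langle\sigma,\tau\rangle$ acting on $\{1,\dots,2m\}$, determine $G\cong D_n$ with $n=\operatorname{lcm}\{i:r_i\neq0\}$, show that each orbit carries the regular $D_i$-action with normal point stabilizer $K_i$, and then read off the answer from the standard formula $\Aut_G\bigl(\bigsqcup_i r_i\cdot(G/K_i)\bigr)\cong\bigoplus_i(N_G(K_i)/K_i)\wr S_{r_i}$.

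The two arguments are closely related---your pair of matchings $\sigma,\tau$ is exactly the edge set of the paper's bipartite graph, and your $G$-orbits are its connected components---but they package the endgame differently. The paper's picture is more geometric and requires no group theory beyond recognising the automorphism group of a polygon; your argument is more algebraic and has the advantage that the wreath product with $S_{r_i}$ falls out automatically from the $\Aut_G$ formalism rather than being argued separately. Your extra step of pinning down $G$ globally as $D_n$ (not just each $G|_O$ as $D_i$) is what guarantees that equal-sized orbits are $G$-isomorphic, and your observation that no reflection of $D_\infty$ dies in any $D_i$ is the clean way to see this; the paper avoids this step because graph components of equal size are trivially isomorphic as graphs.
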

		
	Thus $|H\cap gHg^{-1}|$ can be seen as a random variable on partitions of $m$ with probability distribution of counting measure $P(\lambda)=\frac{|HxH|}{|S_{2m}|}$, if $g\in HxH$ for $x\in\Sym\{2,4,\cdots,2m\}$ with cycle type $\lambda$. Then we prove
	\begin{thm}\label{thm-left tail}For any $c>0$,
		\[P(|H\cap gHg^{-1}|<m^c)\rightarrow 0, \text{ as }m\rightarrow\infty.\]
		Consequently, good elements of $S_{2m}$ have density zero.
	\end{thm}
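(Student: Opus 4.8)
The plan is to first make the random variable $|H\cap gHg^{-1}|$ and the measure $P$ completely explicit, and then to recognize $P$ as a classical distribution on partitions. If $x\in\Sym\{2,4,\dots,2m\}$ is the representative of $HgH$ furnished by Theorem~\ref{thm-structure theorem} and $x$ has $r_i$ cycles of length $i$, then combining $|HxH|=|H|^2/|H\cap xHx^{-1}|$ with $|H|=2^mm!$, $|S_{2m}|=(2m)!$, and Theorem~\ref{thm-structure theorem} applied to $g=x$ (so that $|H\cap xHx^{-1}|=\prod_i(2i)^{r_i}r_i!$), one obtains the exact formula
\[
P(\lambda)=\frac{4^m(m!)^2}{(2m)!}\prod_{i\ge 1}\frac{1}{(2i)^{r_i}\,r_i!}.
\]
Since $\tfrac12\cdot\tfrac32\cdots\tfrac{2m-1}{2}=\frac{(2m)!}{4^mm!}$, this is precisely the Ewens sampling formula with parameter $\theta=\tfrac12$; equivalently, $P$ is the law of the cycle type of a random $\sigma\in S_m$ drawn with probability proportional to $2^{-(\text{number of cycles of }\sigma)}$. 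So the task reduces to showing that under the Ewens$(\tfrac12)$ measure one has $\prod_i(2i)^{r_i}r_i!\ge m^c$ with probability tending to $1$, for every fixed $c>0$.

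For this I would invoke the standard description of the large parts of an Ewens partition. Write $\lambda_{(1)}\ge\lambda_{(2)}\ge\cdots$ for the parts of $\lambda$ in decreasing order. It is classical (Kingman; see the monograph of Arratia–Barbour–Tavaré) that $(\lambda_{(1)}/m,\lambda_{(2)}/m,\dots)$ converges in distribution to the Poisson–Dirichlet law $\mathrm{PD}(\tfrac12)$, all of whose coordinates are almost surely strictly positive. It follows that for every fixed $k\ge 1$,
\[
P\bigl(\lambda_{(k)}\ge\sqrt m\bigr)\longrightarrow 1\qquad(m\to\infty),
\]
since for any continuity point $\delta>0$ of the law of the $k$th $\mathrm{PD}(\tfrac12)$ coordinate $P_k$ one has $\liminf_m P(\lambda_{(k)}\ge\sqrt m)\ge\liminf_m P(\lambda_{(k)}\ge\delta m)=P(P_k\ge\delta)$, and $P(P_k\ge\delta)\to P(P_k>0)=1$ as $\delta\downarrow 0$. (Alternatively, the GEM stick-breaking construction shows directly that for a suitably small fixed $\delta$ there are at least $k$ cycles of length $\ge\delta m$ with probability $\to 1$.)

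To conclude the first assertion, fix $c>0$ and put $k=\lceil 2c\rceil$. On the event $\{\lambda_{(k)}\ge\sqrt m\}$ — which in particular forces $\lambda$ to have at least $k$ parts — we get
\[
|H\cap gHg^{-1}|=\prod_{i}(2i)^{r_i}r_i!\ \ge\ \prod_i i^{r_i}\ =\ \prod_{t\ge1}\lambda_{(t)}\ \ge\ \prod_{t=1}^{k}\lambda_{(t)}\ \ge\ \lambda_{(k)}^{\,k}\ \ge\ m^{k/2}\ \ge\ m^c,
\]
whence $P(|H\cap gHg^{-1}|<m^c)\le P(\lambda_{(k)}<\sqrt m)\to 0$. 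Finally, since $|H\cap gHg^{-1}|$ depends only on the double coset $HgH$, the set $\{g\in S_{2m}:|H\cap gHg^{-1}|<m^c\}$ is a union of double cosets and its density in $S_{2m}$ equals $P(|H\cap gHg^{-1}|<m^c)$; as this tends to $0$ for every $c$, the good elements have density zero.

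I expect the only genuine difficulty to lie in the input about the large cycles; once $P$ is identified as Ewens$(\tfrac12)$ the rest is bookkeeping. If one prefers to avoid quoting Poisson–Dirichlet, a self-contained substitute is a second-moment estimate for $X:=\sum_i r_i\log i=\sum_{t}\log\lambda_{(t)}$: under Ewens$(\tfrac12)$ one has $\mathbb{E}[X]\sim\tfrac14(\log m)^2$, while coupling the cycle counts to independent Poisson$(1/(2j))$ variables (the Feller coupling) bounds $\operatorname{Var}(X)=O((\log m)^3)$, so Chebyshev yields $X\ge\tfrac18(\log m)^2\ge c\log m$ with probability $\to 1$, and $|H\cap gHg^{-1}|\ge\prod_i i^{r_i}=e^{X}$. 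Either route isolates the same point: a typical Ewens partition of $m$ carries a bounded number of parts of polynomial-in-$m$ size, and those parts alone already force the centralizer intersection to be superpolynomial.
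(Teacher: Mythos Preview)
Your proof is correct and takes a genuinely different route from the paper. Both begin by identifying $P$ with the Ewens sampling formula $\mathrm{ESF}(\tfrac12)$; after that the arguments diverge. The paper proceeds analytically: it introduces the moment sums $W_{\beta,m}=\sum_{|\lambda|=m}\prod_i\bigl((2i)^{r_i}r_i!\bigr)^{-\beta}$, packages them into a generating function $W_\beta(z)$ with an exp--log factorization through polylogarithms, and applies the hybrid method of Flajolet et al.\ to extract the coefficient asymptotic $W_{\beta,m}\sim W_\beta(1)(2m)^{-\beta}$ for $\beta>1$; a Markov bound on $f^{-\alpha}$ then yields the explicit polynomial rate $P(f<m^c)=O\bigl(m^{-1/2+(c-1)\alpha}\bigr)$ for any small $\alpha>0$. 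Your argument is probabilistic: you quote Kingman's convergence of the rescaled large parts to Poisson--Dirichlet$(\tfrac12)$, deduce that for any fixed $k$ the $k$ largest parts each exceed $\sqrt m$ with probability tending to $1$, and bound $f(\lambda)\ge\prod_{t\le k}\lambda_{(t)}\ge m^{k/2}$. This is shorter and conceptually cleaner, but the convergence is soft (no rate without further input); the paper's machinery is heavier but delivers quantitative decay and, more importantly, the same generating-function apparatus is reused in the proof of the right-tail Theorem~\ref{thm-right tail}. Your second-moment alternative via the Feller coupling is also sound and is in spirit closer to the paper's moment-bound strategy, while still sidestepping the singularity analysis; note that it in fact gives the stronger conclusion $f(\lambda)\ge m^{\,c'\log m}$ with high probability, anticipating part of Theorem~\ref{thm-right tail}.
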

	The right tail of $P$ is also estimated to show that
    \begin{thm}\label{thm-right tail}For some constant $C>0$,
    	\[P\left(|H\cap gHg^{-1}|>Cm^{\log m}\right)\rightarrow 0, \text{ as }m\rightarrow\infty.\]
    	In particular, the bad elements $g\in S_{2m}$ with $|H\cap gHg^{-1}|\gg m^{\log m}$ have zero density.
    \end{thm}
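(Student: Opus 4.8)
The plan is to recognise the measure $P$ as a Ewens distribution and then to control $|H\cap gHg^{-1}|$ through the number of parts of the underlying partition. Throughout, $\log$ denotes the natural logarithm. Write $r_i$ for the number of parts of $\lambda$ equal to $i$, put $z_\lambda=\prod_i i^{r_i}r_i!$ and $\ell(\lambda)=\sum_i r_i$ for the number of parts of $\lambda$, and set $N(\lambda):=|H\cap gHg^{-1}|$ for $g\in HxH$ with $x$ of cycle type $\lambda$. By Theorem~\ref{thm-structure theorem}, $N(\lambda)=\prod_i(2i)^{r_i}r_i!=2^{\ell(\lambda)}z_\lambda$ depends only on $\lambda$, and $|HxH|=|H|^2/N(\lambda)$ with $|H|=2^m m!$ and $|S_{2m}|=(2m)!$. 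Hence
\[
P(\lambda)=\frac{|HxH|}{|S_{2m}|}=\frac{(2^m m!)^2}{(2m)!}\cdot\frac{2^{-\ell(\lambda)}}{z_\lambda},
\]
which is exactly the Ewens sampling formula with parameter $\theta=\tfrac12$ (the prefactor $(2^m m!)^2/(2m)!=4^m\binom{2m}{m}^{-1}$ is consistent with the classical identity $\sum_{\lambda\vdash m}z_\lambda^{-1}t^{\ell(\lambda)}=\binom{t+m-1}{m}$ at $t=\tfrac12$). Thus $\lambda$ is the cycle type of a $\theta=\tfrac12$ Ewens random permutation of $m$ letters.

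Second, I would reduce the theorem to a tail estimate for $\ell(\lambda)$. Every part of $\lambda$ is $\le m$ and $\prod_i r_i!\le(\sum_i r_i)!=\ell(\lambda)!$, so
\[
N(\lambda)=2^{\ell(\lambda)}\prod_i i^{r_i}\prod_i r_i!\le(2m)^{\ell(\lambda)}\,\ell(\lambda)!,\qquad\text{i.e.}\qquad\log N(\lambda)\le\ell(\lambda)\log(2m)+\log\bigl(\ell(\lambda)!\bigr).
\]
Fix $\varepsilon\in(0,\tfrac12)$ and put $t=t(m)=(\tfrac12+\varepsilon)\log m$. On the event $\{\ell(\lambda)\le t\}$ the right-hand side is $(\tfrac12+\varepsilon)(\log m)^2+O(\log m\cdot\log\log m)<(\log m)^2$ for all large $m$, so $N(\lambda)\le m^{\log m}$ there. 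Hence $\{N(\lambda)>Cm^{\log m}\}\subseteq\{\ell(\lambda)>t\}$ for any fixed $C\ge1$ and all large $m$.

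Third, I would bound $P(\ell(\lambda)>t)$. Combining the identity $\sum_{\lambda\vdash m}z_\lambda^{-1}s^{\ell(\lambda)}=\binom{s+m-1}{m}$ with the formula for $P$ gives the probability generating function
\[
\mathbb{E}\bigl[s^{\ell(\lambda)}\bigr]=\frac{(2^m m!)^2}{(2m)!}\binom{s/2+m-1}{m}=\prod_{j=0}^{m-1}\frac{s+2j}{2j+1}=\prod_{j=0}^{m-1}\Bigl(1+\frac{s-1}{2j+1}\Bigr),
\]
so that $\log\mathbb{E}[s^{\ell(\lambda)}]\le(s-1)\sum_{j=0}^{m-1}\frac{1}{2j+1}=(s-1)\bigl(\tfrac12\log m+O(1)\bigr)$ for $s>1$; equivalently $\ell(\lambda)$ is a sum of independent Bernoulli variables with parameters $\frac{1}{2j+1}$, $0\le j<m$, of mean $\tfrac12\log m+O(1)$. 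Markov's inequality then yields $P(\ell(\lambda)>t)\le s^{-t}\,\mathbb{E}[s^{\ell(\lambda)}]\le\exp\bigl(-t\log s+(s-1)(\tfrac12\log m+O(1))\bigr)$, and picking $s>1$ close enough to $1$ that $(1+2\varepsilon)\log s>s-1$ makes the exponent $-\delta\log m+O(1)$ for some $\delta=\delta(\varepsilon)>0$, whence $P(\ell(\lambda)>t)\to0$. Together with the second step this gives $P\bigl(|H\cap gHg^{-1}|>Cm^{\log m}\bigr)\le P(\ell(\lambda)>t)\to0$ (one may take $C=1$), and the density statement follows since those $g\in S_{2m}$ form the union of the double cosets $HxH$ with $N(\lambda)>Cm^{\log m}$, of total cardinality at most $P(\ell(\lambda)>t)\cdot|S_{2m}|$.

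The step I expect to be the real obstacle is the third one: obtaining the tail bound for $\ell(\lambda)$ with the sharp constant $\tfrac12$ in front of $\log m$. This is what forces the exponent to come out as $\log m$ rather than a larger constant multiple of it, and it genuinely uses the exact generating function (equivalently the Feller coupling of the Ewens permutation) rather than a crude count of partitions with many parts; a first-moment bound is useless here, since $\mathbb{E}[N(\lambda)]=p(m)\cdot(2^m m!)^2/(2m)!$ is already of order $e^{\Theta(\sqrt m)}$, where $p(m)$ is the number of partitions of $m$.
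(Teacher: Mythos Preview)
Your argument is correct and takes a genuinely different route from the paper's.  The paper bounds the right tail by applying Markov's inequality to $f^{1-\beta}$ for $\beta\uparrow1$, which reduces to finding asymptotics of the partition sums $W_{\beta,m}=\sum_{|\lambda|=m}f(\lambda)^{-\beta}$ for $0<\beta<1$; this is done by assembling them into the generating function $W_\beta(z)=\prod_i I_\beta(z^i/(2i)^\beta)$, isolating the dominant singular factor $\exp\bigl(2^{-\beta}\Gamma(1-\beta)(-\log z)^{\beta-1}\bigr)$, and invoking Wright's saddle-point expansions for coefficients of power series with exponential singularities.  Tuning $\beta=1-t/(\log m)^2$ then forces the threshold $c>\log m$.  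Your proof instead exploits the crude but sharp pointwise bound $f(\lambda)=2^{\ell(\lambda)}z_\lambda\le(2m)^{\ell(\lambda)}\ell(\lambda)!$, which reduces everything to the upper tail of $\ell(\lambda)$; the key input is the classical cycle-index identity $\sum_{\lambda\vdash m}z_\lambda^{-1}u^{\ell(\lambda)}=\binom{u+m-1}{m}$, which (with $u=s/2$) makes $\mathbb{E}[s^{\ell(\lambda)}]=\prod_{j=0}^{m-1}\frac{s+2j}{2j+1}$ factor completely, so that $\ell(\lambda)$ is a sum of independent Bernoulli$(1/(2j+1))$ variables with mean $\tfrac12\log m+O(1)$ and a standard Chernoff bound finishes the job.

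What each approach buys: the paper's analytic-combinatorics machinery is heavier but in principle yields precise asymptotics of the moments $W_{\beta,m}$ uniformly in $\beta$, which is the natural companion to its treatment of the left tail via the hybrid method.  Your argument is far more elementary---it uses nothing beyond the Feller coupling of the Ewens permutation and a one-line large-deviation estimate---and it actually delivers a slightly cleaner conclusion (one may take $C=1$, and in fact the same proof with $\varepsilon\downarrow0$ gives $P\bigl(f(\lambda)>m^{(\frac12+\varepsilon')\log m}\bigr)\to0$ for every fixed $\varepsilon'>0$).  Your remark that a first-moment bound on $f$ is useless because $\mathbb{E}_P[f]=p(m)\cdot(2^m m!)^2/(2m)!$ grows like $e^{\Theta(\sqrt m)}$ is also correct and explains why both proofs must, one way or another, exploit the concentration of $\ell(\lambda)$ around $\tfrac12\log m$.
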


    
    \emph{Outline of paper}. 
    
    The 1-1 correspondence $H\backslash S_{2m}/H\leftrightarrow\{\text{parititions of }m\}$ of Theorem \ref{thm-structure theorem} is established inductively by studying the left and right action of $H$ on $S_{2m}$ in details in section \ref{subsection-double cosets counting}. It can also be verified by a character formula in section \ref{subsection-character formulas}. Then combined with an idea of bipartite graph automorphism construction introduced by J. P. James \cite{J.P.James}, we prove the structure result in Theorem \ref{thm-structure theorem} in section \ref{subsection-finer structure of centralizers}. As a byproduct we prove that they are all rational groups in aspect of representation theory. Section \ref{subsection-structure theorem verification} gives some computational verification of Theorem \ref{thm-structure theorem}.    
    
    Explicitly shown in section \ref{subsection-distribution of size of centralizers}, the distribution of $|H\cap gHg^{-1}|$ happens to be $P=\rm ESF(\frac{1}{2})$, where $\rm ESF(\frac{1}{2})$ is the Ewens' distribution with bias $\frac{1}{2}$. Then we estimate the left tail $P(\leq m^c)$ by the moment bound. The expectations for each $m$ involved in the moment bound are assembled into a special generating function. Then asymptotics of the expectations can be extracted from coefficients of singular expressions of the generating function around its singularities which are of logarithmic type, see section \ref{subsection-exp-log schema}. We use techniques from analytic combinatorics, especially the hybrid method introduced by Flajolet et al \cite{Flajolet}, to find the correct asymptotics and prove Theorem \ref{thm-left tail} in section \ref{subsection-hybrid method}.
    
    Following the same probabilistic setting, the expectations involved in the moment bound of the right tail are assembled in to generating functions with singularities of exponential type. Then to prove Theorem \ref{thm-right tail}, we use asymptotics of coefficients of generating functions of exponential type which was given by E. M. Wright \cite{Wright}, in section \ref{section-Tail of Ewens' distribution}.
    \medskip\medskip
    
	\textbf{Acknowledgement}\quad The author is supported by ERC Consolidator grant 648329 (GRANT), leaded by Professor Harald Helfgott. The author owes gratitute to H. H. ($=H^2$) for introducing the subject and bringing up the problem. The author also thanks Lifan Guan for helpful discussion and suggestions on editing the paper. 
	\section{Structures of double cosets and $H\cap gHg^{-1}$}\label{section-structures of double cosets}
	\subsection{Preliminaries on $H$ and $H\cap gHg^{-1}$}\label{section-preliminaries and examples}
	This section includes some necessary basic group theoretic results on $H=C(h_0)\leq S_{2m}$ for $h_0=(1\ 2)(3\ 4)\cdots (2m-1\ 2m)$ and $H\cap gHg^{-1}$ for general $g\in S_{2m}$.

	Firstly, viewed as preserving the block partition $\{1,2\}, \{3,4\},\cdots, \{2m-1,2m\}$ of $1,2\cdots,2m$, the structure of $H$ is as simple as follows
	\begin{prop}\label{prop-structure of H}
		$H$ has the wreath product structure $H=C(h_0)\simeq C_2\wr S_m$.
	\end{prop}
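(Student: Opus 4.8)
The plan is to realize $H$ explicitly as the setwise stabilizer of the block partition $\mathcal{B}=\{B_1,\dots,B_m\}$ with $B_i=\{2i-1,2i\}$, and then read off the wreath product structure from that description. First I would note that the supports of the $m$ transpositions occurring in $h_0$ are precisely the blocks $B_i$; hence if $g\in S_{2m}$ satisfies $gh_0g^{-1}=h_0$, then $g$ must carry each $2$-cycle $(2i-1\ 2i)$ of $h_0$ to another $2$-cycle of $h_0$, i.e.\ $g$ permutes the set $\mathcal{B}$. This produces a group homomorphism $\pi\colon H\to\Sym(\mathcal{B})\cong S_m$, sending $g$ to the induced permutation of the blocks.

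Next I would identify the kernel and a section of $\pi$. An element $g\in\ker\pi$ fixes every $B_i$ setwise, so it restricts on each block to either the identity or the transposition $(2i-1\ 2i)$; conversely any product of such transpositions commutes with $h_0$, since each of its factors is one of the cycles of $h_0$ (or a trivial factor). Hence $\ker\pi\cong (C_2)^m$, with the $i$-th factor generated by $(2i-1\ 2i)$. For a section I would map $\tau\in S_m$ to the permutation $s(\tau)$ determined by $2i-1\mapsto 2\tau(i)-1$ and $2i\mapsto 2\tau(i)$; a direct check shows $s(\tau)h_0s(\tau)^{-1}=h_0$, that $s$ is a homomorphism, and that $\pi\compose s=\id$.

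Finally I would combine these facts: $H=\ker\pi\rtimes s(S_m)\cong (C_2)^m\rtimes S_m$, and conjugation by $s(\tau)$ sends the generator $(2i-1\ 2i)$ of the $i$-th copy of $C_2$ to the generator $(2\tau(i)-1\ 2\tau(i))$ of the $\tau(i)$-th copy, so the $S_m$-action on the elementary abelian kernel is exactly the permutation of the $m$ factors. That is precisely the defining action of the wreath product, giving $H\cong C_2\wr S_m$. The argument is elementary throughout; the only points needing care are verifying that the section $s$ actually lands in $H$ and that the induced action on $(C_2)^m$ is the unadorned permutation action rather than a twisted one — both of which are immediate from the explicit formula for $s(\tau)$.
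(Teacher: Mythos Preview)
Your proof is correct and follows exactly the approach the paper indicates: the paper does not spell out a proof of this proposition but simply notes that $H$ is the group of permutations preserving the block partition $\{1,2\},\{3,4\},\dots,\{2m-1,2m\}$ and cites James--Kerber 4.1.19 for the general centralizer structure. Your argument is a clean, self-contained elaboration of precisely that block-stabilizer viewpoint, so there is nothing to add or correct.
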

	This is also an easy corollary of 4.1.19 
	of James-Kerber \cite{James-Kerber} which describes the centralizer of any permutation in a symmetric group as a wreath product of cyclic groups with smaller symmetric groups. 
	One immediately notices that $H\cap gHg^{-1}$ is identical for any $g$ in a common left coset of $H$. Moreover, for any $h_1,h_2\in H$ and $g\in S_{2m}$,
	\[H\cap h_1gh_2H(h_1gh_2)^{-1}=H\cap h_1gHg^{-1}h_1^{-1}=h_1\left(H\cap gHg^{-1}\right)h_1^{-1},\]
	hence the structure of $H\cap gHg^{-1}$ depends only on the double coset $HgH$. 
	
    \begin{exam}\label{exam-m=2}
    Let $m=2$, then $H=D_1\wr S_2\simeq(C_2)^2\rtimes S_2$ and $S_4/H=\{\bar{1},\bar{(1\ 3)}, \bar{(1\ 4)}\}$. Computing by hand we get 
    \[H\cap (1\ 3)H(1\ 3)=\{1, (1\ 2)(3\ 4),(1\ 3)(2\ 4),(1\ 4)(2\ 3)\}=K_4,\]
    where $K_4$ is the Klein four group. Again by hand 
    \[H\cap (1\ 4)H(1\ 4)=H\cap (1\ 3)H(1\ 3)=K_4.\]
    This is no wonder because there are only $2$ double cosets in $H\backslash S_4/H$ with representatives $1$ and $(1\ 3)$, and clearly $(3\ 4)(1\ 3)(3\ 4)=(1\ 4)$ (note that $(3\ 4)\in H$).
    \end{exam} 
	\subsection{Double coset decomposition of $S_{2m}$}\label{subsection-double coset decomposition}
	Counting the left cosets contained in $HgH$ gives
	\[|HgH|=|H|[H:H\cap gHg^{-1}]=\dfrac{|H|^2}{|H\cap gHg^{-1}|}.\]
	Thus if each double coset determines a distinct structure (or size) of $H\cap gHg^{-1}$, the density of those $g$ is assigned by 
	\[\dfrac{|HgH|}{|S_{2m}|}=\dfrac{|H|^2}{|S_{2m}||H\cap gHg^{-1}|}=\dfrac{(2^mm!)^2}{(2m)!|H\cap gHg^{-1}|}.\] 
	In addition, the double coset decomposition of $S_{2m}$ by $H$ gives
	\[\label{equation-double coset decomposition}|S_{2m}|=\sum_{g\in H\backslash S_{2m}/H}|HgH|=\sum_{g\in H\backslash S_{2m}/H}\dfrac{|H|^2}{|H\cap gHg^{-1}|},\]	
	and consequently
	\[\sum_{g\in H\backslash S_{2m}/H}\dfrac{1}{|H\cap gHg^{-1}|}=\dfrac{|S_{2m}|}{|H|^2}=\dfrac{(2^mm!)^2}{(2m)!}\sim \dfrac{1}{\sqrt{\pi m}},\]
	by Stirling's formula. These formulas become the starting point of studying distribution of $|H\cap gHg^{-1}|$ in section \ref{section-counting good elements}.
	 
	\subsection{Counting double cosets by partition number}\label{subsection-double cosets counting}
	To describe the structure of $H\backslash S_{2m}/H$, we first prove the following lemma on double coset representatives.
	\begin{lem}\label{lem-double coset rep's}
		Each double coset of $H\backslash S_{2m}/H$ has a representative supported on the odd integers $M=\{1,3,\cdots,2m-1\}$ or the even integers $M'=\{2,4,\cdots,2m\}$.
	\end{lem}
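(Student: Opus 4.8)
The plan is to show that every $g\in S_{2m}$ admits a factorization $g=h_1^{-1}xh_2^{-1}$ with $h_1,h_2\in H$ and $x=h_1gh_2$ fixing every even integer. Such an $x$, being a bijection of $\{1,\dots,2m\}$ that fixes $M'=\{2,4,\dots,2m\}$ pointwise, must permute $M=\{1,3,\dots,2m-1\}$ among themselves, i.e.\ it is supported on $M$, and then $g\in HxH$. The variant with support on $M'$ is then automatic: if $x$ is supported on $M$ then $h_0xh_0^{-1}$ is supported on $M'$ (since $h_0$ swaps $2k-1\leftrightarrow 2k$), and because $h_0\in H$ we have $h_0(HxH)h_0^{-1}=(h_0H)x(Hh_0^{-1})=HxH$, so the same double coset is represented by an element supported on $M'$.

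To build $h_1,h_2$ I would first re-encode $H\simeq C_2\wr S_m$ through the block system $B_k=\{2k-1,2k\}$. Write $b(j)$ for the index of the block containing $j$, and call a tuple $(v_1,\dots,v_m)$ of distinct symbols a \emph{transversal} when $(b(v_1),\dots,b(v_m))$ is a permutation of $(1,\dots,m)$. Then as $h$ runs over $H$ the tuple $(h(2),h(4),\dots,h(2m))$ runs over exactly the transversals (with $\sigma\in S_m$ the induced block permutation one has $b(h(2k))=\sigma(k)$, and $h$ is recovered from $\sigma$ together with the images $h(2k)$); likewise for $h^{-1}$, since $H$ is a group. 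Hence $x=h_1gh_2$ fixes every even integer if and only if $g(h_2(2k))=h_1^{-1}(2k)$ for all $k$; and as $(h_2(2),\dots,h_2(2m))$ and $(h_1^{-1}(2),\dots,h_1^{-1}(2m))$ range over all transversals, such $h_1,h_2$ exist if and only if there is a choice function $\psi$ with $\psi(k)\in B_k$ for every $k$ such that $(g(\psi(1)),\dots,g(\psi(m)))$ is again a transversal --- equivalently, the blocks $b(g(\psi(1))),\dots,b(g(\psi(m)))$ are pairwise distinct. (One then takes $h_2\in C_2^m\leq H$ with $h_2(2k)=\psi(k)$ and $h_1\in H$ with $h_1^{-1}(2k)=g(\psi(k))$.)

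The remaining step is purely combinatorial: such a $\psi$ always exists. Form the bipartite multigraph $\Gamma$ with both vertex classes equal to $\{1,\dots,m\}$, joining a left vertex $k$ to the right vertex $b(g(j))$ for each of the two $j\in B_k$. By construction every left vertex has degree $2$, and every right vertex $l$ has degree $\#\{j:g(j)\in B_l\}=|g^{-1}(B_l)|=2$; a $2$-regular bipartite multigraph is a disjoint union of even cycles (a pair of parallel edges counting as a $2$-cycle), so $\Gamma$ carries a perfect matching $\mu$. Reading $\mu$ off: at each left vertex $k$ the matching edge picks out a single $\psi(k)\in B_k$, and $k\mapsto b(g(\psi(k)))$ is precisely the bijection encoded by $\mu$; this is the required $\psi$. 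I do not anticipate a genuine obstacle: the only points needing care are the bijection between $H$ and transversal tuples --- which is what forces one to allow multipliers on \emph{both} sides --- and the elementary fact that a $2$-regular bipartite multigraph has a perfect matching.
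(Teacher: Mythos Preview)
Your argument is correct and takes a genuinely different route from the paper's own proof. The paper argues by induction on $m$: it splits off any sub-collection of blocks on which $g$ acts separately and applies the inductive hypothesis there; otherwise it manipulates the cycle decomposition of $g$ directly, using left-multiplication by transpositions $(2k-1\ 2k)\in H$ to break any cycle containing a full block $\{2k-1,2k\}$ into two cycles, and then conjugates by further elements of $C$ until every cycle lies entirely in $M$ or entirely in $M'$, finally replacing the $M'$-cycles by their $M$-counterparts.

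Your approach sidesteps the induction and the cycle bookkeeping entirely. By recasting the problem as ``choose one point from each block so that the $g$-images also form a transversal,'' you reduce to finding a perfect matching in the $2$-regular bipartite multigraph with edges $k\!-\!b(g(j))$ for $j\in B_k$; the existence of such a matching is immediate since every component is an even cycle. This is both shorter and conceptually cleaner, and it dovetails nicely with the bipartite-graph construction of J.~P.~James that the paper invokes later (Proposition~\ref{prop-bipartite automorphisms}) to determine the structure of $H\cap gHg^{-1}$: your graph $\Gamma$ is exactly the graph $\cG$ appearing there, so your argument in effect shows that the double coset is determined by the isomorphism type of $\cG$ before that machinery is formally introduced. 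The paper's approach, on the other hand, is more hands-on with the permutation itself and makes explicit which elements of $H$ are being multiplied on each side, which is occasionally useful when one wants an algorithmic normal form rather than mere existence.
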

	\begin{proof}
	We use induction on $m$. $m=1$ is trivial. Suppose the lemma is true for any $m'<m$. For any $x\in S_{2m}$, if $x=yz$ with $y$ and $z$ supported on the $2$-blocks $\coprod_{t\in N}\{t,t+1\}$ and $\coprod_{t\notin N}\{t,t+1\}$ respectively for some proper subset $N\subset M$ (in particular $y$ commutes with $z$), then by induction $y$ and $z$ can be made into permutations on $N$ and $M\smallsetminus N$ through multiplying on left and right by $H$ restricted to $\coprod_{t\in N}\{t,t+1\}$ and $\coprod_{t\notin N}\{t,t+1\}$ respectively. Hence $HxH$ has a representative supported on $M$. 
	
	Otherwise, in the cycle decomposition of $x$ every cycle shares supports on some block $\{2k-1, 2k\}$ with another cycle. 
	If a cycle in $x$ contains both $2k-1$ and $2k$, then it can be written as
	\[(2k-1\ l_1\cdots\ l_s\ 2k\ l'_1\ \cdots\ l'_t)\]
	\[=(2k-1\ l_1)\cdots(2k-1\ l_s)(2k-1\ 2k)(2k-1\ l'_1)\cdots(2k-1\ l'_t),\]
	with all numbers distinct.
	Multiplying $(2k-1\ 2k)$ ($\in H$) on the left on both sides above we get
	\[(2k-1\ 2k)x=(2k\ l_1\ \cdots\ l_s)(2k-1\ l'_1\ \cdots\ l'_t)\cdots,\]
	i.e. we can decompose the cycle into two cycles which split $\{2k-1,2k\}$. Repeat the procedure using suitable $(2k_i-1\ 2k_i)$ ($\in H$) $i=1,\cdots,r$, until $(2k_1-1\ 2k_1)\cdots(2k_r-1\ 2k_r)x$ has no cycles containing any $\{2k-1, 2k\}$. (This is doable since $(2k_i-1\ 2k_i)$ commutes with the cycles not intersecting $\{2k_i-1, 2k_i\}$.) 
	
	For a representative with such cycle type, by multiplying $(2k-1\ 2k)$'s simultaneously on left and right, we get a product of cycles which contains only either odd numbers or even numbers. Then move all cycles of even numbers to the left and by commutativity replace them by corresponding cycles of the complementary odd numbers by multiplying on the left the unique element in $H$ supported on the corresponding $2$-blocks. (For example, $(2\ 6\ 4)(8\ 10)$ can be replaced by $(1\ 5\ 3)(7\ 9)$ since $(2\ 6\ 4)(8\ 10)(1\ 5\ 3)(7\ 9)=(2\ 6\ 4)(1\ 5\ 3)(8\ 10)(7\ 9)\in H$.)
	Thus we get a representative of $HxH$ supported on odd numbers. Replacing the cycles of odd number by complementary even numbers we get a representatives supported on even integers.
	\end{proof}
	In addition, the following explicit expression of Proposition \ref{prop-structure of H} is crucial to proving the main result of this section.
	\begin{lem}\label{lem-TC decomposition}
		Let $M=\{1,3,\cdots,2m-1\},M'=\{2,4,\cdots,2m\}$, $C=\quad$ $\prod_{i=1}^m \Sym\{2i-1, 2i\}\le S_{2m}$, and $T=\Sym\{(1,2),\cdots,(2m-1, 2m)\}\le S_{2m}$ (the symmetric group of the ordered pairs $(2k-1,2k)$'s). Then $H=TC$ and explicitly for any $h\in H$, there is a unique decomposition \[h=\bar{h}\tilde{h}=\bar{h}_M\bar{h}_{M'}\tilde{h}=\bar{h}_{M'}\bar{h}_M\tilde{h},\] 
		in which $\tilde{h}\in C$, $\bar{h}\in T$, $\bar{h}_M$ and $\bar{h}_{M'}$, commuting with each other, are the complementary permutation actions of $\bar{h}$ restricted onto $M$ and $M'$ respectively. We call it the TC-decomposition of $H$.
	\end{lem}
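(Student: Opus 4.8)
\emph{Proof proposal.} The plan is to make Proposition \ref{prop-structure of H} concrete: I would identify $C$ with the base group $C_2^m$ and $T$ with the top group $S_m$ of the wreath product $H\simeq C_2\wr S_m$, using the natural action of $H$ on the $m$ blocks $\{1,2\},\dots,\{2m-1,2m\}$. Once $T$ and $C$ are recognized inside $H$ as complementary subgroups with trivial intersection, both the factorization $H=TC$ and the uniqueness of the decomposition $h=\bar h\tilde h$ fall out of elementary semidirect-product bookkeeping, and the refinement $\bar h=\bar h_M\bar h_{M'}=\bar h_{M'}\bar h_M$ is just the observation that a rigid block permutation splits into its action on odd and on even integers, which have disjoint supports.

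The first real step is the block-permutation homomorphism. Since $hh_0h^{-1}=h_0$ for $h\in H$ and the blocks are precisely the supports of the transpositions occurring in $h_0$, each $h\in H$ permutes the blocks; this gives a homomorphism $\pi\colon H\to\Sym\{(1,2),\dots,(2m-1,2m)\}\cong S_m$. I would then compute $\Ker\pi$: the inclusion $C\subseteq\Ker\pi$ is clear because each $(2i-1\ 2i)$ commutes with $h_0$, while conversely if $h$ fixes every block setwise then on each $\{2i-1,2i\}$ it acts as $1$ or as $(2i-1\ 2i)$, so $h\in C$; hence $\Ker\pi=C$. For surjectivity, check that for $\sigma\in S_m$ the ``rigid'' permutation $t_\sigma\colon\,2i-1\mapsto 2\sigma(i)-1,\ 2i\mapsto 2\sigma(i)$ satisfies $t_\sigma h_0t_\sigma^{-1}=h_0$, so $t_\sigma\in H$ and $t_\sigma\in T$, with $\pi(t_\sigma)=\sigma$; thus $\pi|_T$ is an isomorphism onto $S_m$ and $\pi$ is onto (which also re-derives $|H|=2^m m!$, consistently with Proposition \ref{prop-structure of H}).

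It remains to assemble the TC-decomposition. From $\Ker\pi=C$ and $\pi|_T$ injective we get $T\cap C=T\cap\Ker\pi=\{1\}$, so the map $T\times C\to TC$ has singleton fibres and $|TC|=|T||C|=2^m m!=|H|$; since $TC\subseteq H$ this forces $H=TC$ (and symmetrically $H=CT$). For a given $h\in H$ set $\bar h:=t_{\pi(h)}\in T$ and $\tilde h:=\bar h^{-1}h$; then $\pi(\tilde h)=\pi(\bar h)^{-1}\pi(h)=1$, so $\tilde h\in C$, and $h=\bar h\tilde h$. Uniqueness is immediate: if $\bar h\tilde h=\bar h'\tilde h'$ with $\bar h,\bar h'\in T$ and $\tilde h,\tilde h'\in C$, then $(\bar h')^{-1}\bar h=\tilde h'\tilde h^{-1}\in T\cap C=\{1\}$. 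Finally, writing $\bar h=t_\sigma$ with $\sigma=\pi(h)$, let $\bar h_M$ be its restriction to the odd integers $M$ (sending $2i-1\mapsto 2\sigma(i)-1$ and fixing $M'$) and $\bar h_{M'}$ its restriction to the even integers $M'$ (sending $2i\mapsto 2\sigma(i)$ and fixing $M$); these have disjoint supports, hence commute, and their product is $\bar h$, giving $h=\bar h_M\bar h_{M'}\tilde h=\bar h_{M'}\bar h_M\tilde h$. The only non-formal point — the crux rather than a genuine obstacle — is the computation $\Ker\pi=C$ together with the verification that the rigid permutations $t_\sigma$ lie in $H$ and realize all of $S_m$; everything after that is routine.
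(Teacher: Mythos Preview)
Your proof is correct and follows essentially the same approach as the paper: both identify $C$ as the kernel of the block-permutation map and $T$ as its section by ``rigid'' block permutations, then deduce $H=TC$ with $T\cap C=\{1\}$ for uniqueness, and finally split $\bar h$ into its odd and even parts with disjoint supports. The only cosmetic difference is that the paper constructs $\bar h$ by an explicit case-by-case definition on $2k$ and $2k-1$, whereas you phrase the same construction via the homomorphism $\pi$ and the section $\sigma\mapsto t_\sigma$.
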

	\begin{proof}
		For any $h\in H$ and $k\leq m$, let $\bar{h}$ be the permutation action defined as 
		\[\bar{h}\cdot(2k)=\begin{cases}h\cdot(2k), \text{ if } h\cdot(2k) \text{ is even},\\
		h\cdot(2k-1), \text{ if } h\cdot(2k) \text{ is odd},
		\end{cases}\]
		and
		$$\bar{h}\cdot(2k-1)=\begin{cases}h\cdot(2k-1), \text{ if } h\cdot(2k) \text{ is odd},\\
		h\cdot(2k), \text{ if } h\cdot(2k) \text{ is even},
		\end{cases}$$
		where $h\cdot i$ denotes the number that $h$ moves $i$ to. 
		
		The definition guarantees that $\bar{h}$ sends even numbers to even numbers and odd to odd while still preserving the partition $\{1,2\},\cdots,\{2m-1,2m\}$, hence belongs to $H$. The case separation in the definition where $2k-1$ and $2k$ are switched by $h$ gives a product of transpositions $(2k-1,2k)$'s, denoted by $\tilde{h}$. This amounts to the decomposition $h=\bar{h}\tilde{h}$ which is unique simply because $C\cap T=\{1\}$. Restriction of $\bar{h}$ onto $M$ and $M'$ gives the $3$-term decomposition
		\[h=\bar{h}_M\bar{h}_{M'}\tilde{h}=\bar{h}_{M'}\bar{h}_M\tilde{h},\]
		whose uniqueness is due to the decomposition $T=\Sym{M}\times \Sym(M')$.
	\end{proof}
	\begin{remark}\label{remark-CT decomposition}
		Note that alternatively we have the CT-decomposition $H=CT$, i.e. $h=\tilde{h}'\bar{h}$ for some $\tilde{h}'\in C$ which switches $h\cdot(2k)$ and $h\cdot(2k-1)$ when necessary. 
	\end{remark}
	Now we can prove the main result on the structure of $H\backslash S_{2m}/H$.
	\begin{prop}\label{prop-structure of double cosets}
		Keep the notations from last lemma. Each conjugacy class of $\Sym(M)$ ($\Sym(M')$) is contained in a distinct double coset of $H\backslash S_{2m}/H$, and each double coset intersects with $\Sym(M)$ ($\Sym(M')$) at a conjugacy class of $\Sym(M)$ ($\Sym(M')$). Consequently, $|H\backslash S_{2m}/H|=p(m)$, the partition number of $m$.
	\end{prop}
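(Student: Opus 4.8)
The plan is to exhibit an explicit bijection between partitions of $m$ and $H\backslash S_{2m}/H$ by attaching to each double coset a combinatorial invariant that reads off the cycle type of its representative in $\Sym(M)$.

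First, the easy half. By Lemma \ref{lem-double coset rep's} every double coset has a representative $x$ fixing every even number, hence lying in $\Sym(M)$; this makes the assignment $\lambda\mapsto Hx_\lambda H$ (for $x_\lambda\in\Sym(M)$ of cycle type $\lambda$) a candidate surjection, once it is well defined. For well-definedness I would check that conjugate elements of $\Sym(M)$ lie in a common double coset: given $\rho\in\Sym(M)$, extend it to the unique block permutation $\hat\rho\in T$ with $\hat\rho|_M=\rho$; then $\hat\rho|_{M'}$ has support disjoint from any $x\in\Sym(M)$, so $\hat\rho x\hat\rho^{-1}=\rho x\rho^{-1}$, and since $\hat\rho\in T\le H$ we get $HxH=H(\rho x\rho^{-1})H$. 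Thus $\lambda\mapsto Hx_\lambda H$ is a well-defined surjection $\{\text{partitions of }m\}\twoheadrightarrow H\backslash S_{2m}/H$.

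For injectivity I would use the bipartite-graph device (the idea attributed to James). To $g\in S_{2m}$ associate the bipartite multigraph $\Gamma(g)$ with vertex classes $u_1,\dots,u_m$ and $v_1,\dots,v_m$ and one edge joining $u_j$ to $v_i$ for each $x$ in block $B_j=\{2j-1,2j\}$ with $g(x)$ in block $B_i$. Every vertex has degree $2$, so $\Gamma(g)$ is a disjoint union of even cycles (double edges allowed), and its isomorphism type is recorded by the partition of $m$ whose parts are the numbers of $u$-vertices in the connected components. Writing $h\in H$ as a block permutation followed by within-block swaps (Lemma \ref{lem-TC decomposition}), one checks that left translation $g\mapsto hg$ merely relabels the $v_i$, right translation $g\mapsto gh$ merely relabels the $u_j$, and the within-block swaps do nothing; so the isomorphism type of $\Gamma(g)$ depends only on $HgH$. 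Finally, for $x\in\Sym(M)$ (so $x$ fixes every even number), each $u_j$ carries the edge $u_j v_j$ coming from $2j$ and an edge $u_j v_{\pi(j)}$ coming from $2j-1$, where $\pi\in S_m$ is $x$ transported to block indices; hence the components of $\Gamma(x)$ correspond to the cycles of $\pi$, a cycle of length $\ell$ producing a component with $\ell$ vertices on each side. So the invariant of $Hx_\lambda H$ is exactly $\lambda$, which forces the surjection above to be injective. This yields $|H\backslash S_{2m}/H|=p(m)$; the two refined assertions — each conjugacy class of $\Sym(M)$ lies in a distinct double coset, and each double coset meets $\Sym(M)$ in a complete conjugacy class — follow formally from bijectivity together with the two facts of the previous paragraph (and the symmetric statements for $\Sym(M')$ follow by conjugating by $h_0\in H$, which interchanges each $2k-1$ with $2k$).

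I expect the step needing the most care to be the verification that $\Gamma(g)$ is a genuine double-coset invariant — the bookkeeping that the $C$-part of $H$ acts trivially on $\Gamma$ while the $T$-part permutes only one class of vertices — together with pinning down the (elementary but fiddly) dictionary between isomorphism classes of bipartite $2$-regular multigraphs on $m+m$ vertices and partitions of $m$, where one must be careful about components that are single double edges. An alternative, more computational route is the induction on $m$ indicated in the outline, peeling off one block and tracking how translation by $H$ splits and merges cycles; it avoids graphs but is heavier, so I would present the graph argument.
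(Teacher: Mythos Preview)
Your argument is correct, but the route for injectivity is genuinely different from the paper's. The paper proves the hard direction algebraically: given $x_1,x_2\in\Sym(M)$ with $x_1hx_2^{-1}\in H$ for some $h\in H$, it applies the TC-decomposition of Lemma~\ref{lem-TC decomposition} to $x_1hx_2^{-1}$, shows by a case analysis that $x_2^{-1}$ must preserve $\mathrm{supp}(\tilde h)$, and then reads off from the uniqueness of the three-term decomposition that $x_1\bar h_M(\tilde hx_2^{-1}\tilde h)=\bar h_M$, so $x_1$ and $x_2$ are $\Sym(M)$-conjugate. You instead build the bipartite $2$-regular multigraph $\Gamma(g)$ as a double-coset invariant and compute it on $\Sym(M)$. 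Your approach is more conceptual and has the advantage of anticipating the James construction the paper introduces only later (Section~\ref{subsection-finer structure of centralizers}) for the structure of $H\cap gHg^{-1}$, so the two proofs dovetail nicely; the paper's approach, by contrast, stays entirely within the elementary TC-decomposition machinery already on hand and avoids setting up the graph dictionary at this stage. Both are short once the relevant lemma is in place; the only point where your write-up would need a line of extra care is the one you already flag, namely that isomorphism classes of bipartite $2$-regular multigraphs on $m+m$ vertices are indeed parametrised by partitions of $m$ (including the double-edge components for fixed points).
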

	\begin{proof}
		For any two conjugates $x_1,x_2\in \Sym(M)$, say conjugated by $x=(2k_1-1\ 2k_2-1\ \cdots\ 2k_s-1)\cdots (2k'_1-1\ 2k'_2-1\ \cdots\ 2k'_t-1)$, then they are conjugate in $S_{2m}$ by $x'=x(2k_1\ 2k_2\ \cdots\ 2k_s)\cdots(2k'_1\ 2k'_2\ \cdots\ 2k'_t)\in H$. 
		Hence $x_2\in Hx_1H$.

		On the other hand, if $x_2\in Hx_1H$, then there exists $h\in H$ such that $x_1hx_2^{-1}\in H$. By Lemma \ref{lem-TC decomposition} we get
		\begin{equation}\label{equation-TC decomposition}x_1hx_2^{-1}=x_1\bar{h}_M\bar{h}_{M'}\tilde{h}x_2^{-1}=x_1\bar{h}_M\bar{h}_{M'}(\tilde{h}x_2^{-1}\tilde{h})\tilde{h}.\end{equation}
		It is easy to check that $chc^{-1}=chc\in\Sym(M)$ ($\Sym(M')$) for any $h\in\Sym(M)$ ($\Sym(M')$) and any $c\in C$ such that $h$ preserves the support of $c$, denoted by $\mathrm{supp}(c)$. 
		
		We claim that $x_2^{-1}$ preserves $\mathrm{supp}(\tilde{h})$. For any $k\le m$, if $2k-1\notin \mathrm{supp}(\tilde{h})$, then $x_1hx_2^{-1}\cdot (2k)=x_1h\cdot (2k)=h\cdot (2k)$ is even.  Since $x_1hx_2^{-1}\in H$, $x_1hx_2^{-1}\cdot (2k-1)$ must be odd, which indicates $x_2^{-1}\cdot (2k-1)\notin \mathrm{supp}(\tilde{h})$. If $2k-1\in \mathrm{supp}(\tilde{h})$, then $x_1hx_2^{-1}\cdot (2k)=x_1h\cdot(2k)$ is odd. Hence $x_1hx_2^{-1}\cdot (2k-1)=hx_2^{-1}\cdot (2k-1)$ is even, and $x_2^{-1}\cdot (2k-1)\in \mathrm{supp}(\tilde{h})$. This shows
		\[x_2^{-1}(M\smallsetminus \mathrm{supp}(\tilde{h}))=M\smallsetminus \mathrm{supp}(\tilde{h}),\ x_2^{-1}(M\cap \mathrm{supp}(\tilde{h}))=M\cap \mathrm{supp}(\tilde{h}),\]
		and consequently $\tilde{h}x_2^{-1}\tilde{h}\in\Sym(M)$.
		
		Therefore in (\ref{equation-TC decomposition}), we can switch $\bar{h}_{M'}$ and $(\tilde{h}x_2^{-1}\tilde{h})$ to get
		\[x_1hx_2^{-1}=\left(x_1\bar{h}_M(\tilde{h}x_2^{-1}\tilde{h})\right)\bar{h}_{M'}\tilde{h},\]
		which must be the $3$-term TC-decomposition of $h$. Hence $x_1hx_2^{-1}=h$ and
		\[x_1\bar{h}_M(\tilde{h}x_2^{-1}\tilde{h})=\bar{h}_M,\]
		i.e. $x_1$ is conjugate to $\tilde{h}x_2\tilde{h}$ by $\bar{h}_M\in\Sym(M)$. 
		
		Furthermore we can choose $h\in H$ with $\tilde{h}=1$, so that $x_1$ is conjugate to $x_2$ by $\bar{h}_M\in\Sym(M)$. Actually since $x_2^{-1}$ preserves $\mathrm{supp}(\tilde{h})$, it is easy to verify that $x_2\tilde{h}x_2^{-1}\in H$. Then the TC-decomposition $x_1hx_2^{-1}=x_1\bar{h}x_2^{-1}x_2\tilde{h}x_2^{-1}$ ($\in H$) implies $x_1\bar{h}x_2^{-1}\in H$. Thus we can choose $h\in T$ in the beginning.
		
		Finally, since $\Sym(M)\simeq S_m$ in an obvious way, the conjugacy classes of $\Sym(M)$ hence the double cosets $H\backslash S_{2m}/H$ are in one-to-one correspondence with the partitions of $m$.
	\end{proof}
	\begin{remark}

		Note that if $x$ preserves the support of $c\in C$, then $cxc$ is the \textit{truncation} of $x$ from $\mathrm{supp}(c)$, i.e. $cxc\mid_{\mathrm{supp}(c)}$ is the trivial permutation and $cxc$ is the same permutation as $x$ outside of $\mathrm{supp}(c)$.
	\end{remark}
	\begin{remark}
		Now we can show by Stirling's formula that
		\[Average\ of\ |H\cap gHg^{-1}|=\sum_{g\in H\backslash S_{2m}/H}\dfrac{|HgH||H\cap gHg^{-1}|}{|S_{2m}|}\]
		\[=|H\backslash S_{2m}/H|\dfrac{|H|^2}{|S_{2m}|}=p(m)\dfrac{2^{2m}(m!)^2}{(2m)!}\]
		\[\sim p(m)\dfrac{2^{2m}\cdot 2\pi m(m/e)^{2m}}{\sqrt{2\pi\cdot 2m}(2m/e)^{2m}}=p(m)\sqrt{\pi m}.\]
		Since $p(m)\sim \dfrac{1}{4\sqrt{3}m}e^{\pi\sqrt{2m/3}}$ by Hardy-Ramanujan \cite{Hardy-Ramanujan}, the average is of super-polynomial growth, which is a sign that the density of good elements should be low.
	\end{remark}
	\subsection{Counting double cosets by character formula}\label{subsection-character formulas}
	Apart from the combinatorial methods in section \ref{subsection-double cosets counting}, there is also an applicable method of counting (self-inverse) double cosets by character formula. 
	\begin{prop}[J.S. Frame \cite{J.S.Frame}, Theorem A]\label{prop-J.S.Frame}
		The number of self-inverse double cosets of a finite group $G$ with respect to a subgroup $H\le G$ equals
		\[\sum_{\chi\in \Irr{G}, \langle\chi, \Ind_H^G\boldmath{1}_H\rangle\neq 0}FS(\chi),\]
		where the sum is over Frobenius-Schur indicators of irreducible characters occurring in the induced character of $G$ from the trivial character of $H$. Here for any character $\chi$ of $G$, 
		\[FS(\chi):=\dfrac{1}{|G|}\sum_{x\in G}\chi(x^2).\]
		Note that $\Ind_H^G\boldmath{1}_H$ is afforded by the permutation representation of $G$ through its action on the right cosets $H\backslash G$.
	\end{prop}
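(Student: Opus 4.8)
The plan is to translate the statement into an orbit-counting (Cauchy--Frobenius) computation on ordered pairs of cosets, and then read off the character-theoretic shape from the Frobenius--Schur expansion of the permutation character $\pi:=\Ind_H^G\mathbf 1_H$. Write $\Omega=H\backslash G$ for the right cosets with their right $G$-action, so $\pi(g)=|\Fix_\Omega(g)|$. The double cosets $H\backslash G/H$ are exactly the orbitals --- the orbits of $G$ acting diagonally on $\Omega\times\Omega$ --- the orbital through $(Hg,H)$ corresponding to $HgH$; and the coordinate swap $(\alpha,\beta)\mapsto(\beta,\alpha)$ carries the orbital of $HgH$ to the orbital of $Hg^{-1}H$. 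Hence self-inverse double cosets correspond precisely to the self-paired orbitals, and the problem becomes: count the self-paired orbitals.

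To do that I would let the (commuting) actions of $G$ and of $\langle\tau\rangle$ with $\tau^2=1$ on $\Omega\times\Omega$ --- $G$ diagonally, $\tau$ by swapping coordinates --- combine into an action of $G\times\langle\tau\rangle$. Cauchy--Frobenius gives the number of $(G\times\langle\tau\rangle)$-orbits as $\tfrac{1}{2|G|}\sum_{g\in G}\bigl(|\Fix_{\Omega\times\Omega}(g)|+|\Fix_{\Omega\times\Omega}(g\tau)|\bigr)$, and since $(g,\tau)$ fixes $(\alpha,\beta)$ iff $\beta=g\alpha$ and $g^2\alpha=\alpha$ this equals $\tfrac{1}{2|G|}\sum_{g}\bigl(\pi(g)^2+\pi(g^2)\bigr)$. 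Now $\tau$ permutes the $G$-orbitals in pairs: writing $s$ for the number of self-paired orbitals and $p$ for the number of genuine $2$-element orbits of $\tau$ on the set of orbitals, the total number of orbitals is $\langle\pi,\pi\rangle=\tfrac1{|G|}\sum_g\pi(g)^2=s+2p$, while the number of $(G\times\langle\tau\rangle)$-orbits is $s+p$. Eliminating $p$ yields
\[ s=\frac{1}{|G|}\sum_{g\in G}\pi(g^2)=FS(\pi). \]

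Finally I would expand $\pi=\sum_{\chi\in\Irr G}m_\chi\chi$, where $m_\chi=\langle\pi,\chi\rangle=\langle\mathbf 1_H,\Res_H\chi\rangle\ge 0$ by Frobenius reciprocity, so that $s=FS(\pi)=\sum_\chi m_\chi\,FS(\chi)$ with the sum supported on the constituents of $\pi$. When $\pi$ is multiplicity-free --- the Gelfand-pair case, which is precisely the situation of the pair $(S_{2m},H)$ treated here --- one has $m_\chi\in\{0,1\}$ and this is exactly the asserted $\sum_{\chi:\,\langle\chi,\Ind_H^G\mathbf 1_H\rangle\neq 0}FS(\chi)$. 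A variant closer to Frame's algebraic formulation runs the argument inside the Hecke algebra $e_H\C G e_H$ (with $e_H=\tfrac1{|H|}\sum_{h\in H}h$): the map $g\mapsto g^{-1}$ induces an involutive anti-automorphism whose fixed basis vectors are the self-inverse double cosets, and whose trace on each block of $e_H\C G e_H\cong\bigoplus_\chi M_{m_\chi}(\C)$ is $+m_\chi$, $-m_\chi$ or $0$ according as $FS(\chi)=1,-1,0$ (the Frobenius--Schur trichotomy), giving $\sum_\chi m_\chi FS(\chi)$ again. I expect no real difficulty beyond the bookkeeping that converts the $(G\times\langle\tau\rangle)$-orbit count into the self-paired-orbital count (equivalently, the block-by-block trace computation in the Hecke-algebra version); the remaining ingredients are just Cauchy--Frobenius and Frobenius reciprocity.
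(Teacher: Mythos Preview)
Your argument is correct, and in fact slightly more careful than the paper's: you arrive at $s=FS(\pi)=\sum_\chi m_\chi\,FS(\chi)$ and explicitly note that the stated formula $\sum_{\chi:\,m_\chi\neq 0}FS(\chi)$ only follows in the multiplicity-free (Gelfand-pair) case, which is exactly the situation $(S_{2m},H)$ under consideration. The paper's proof glosses over this last step.

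The routes differ in flavour. The paper proceeds by a direct double-counting: it shows that each self-inverse double coset contributes exactly $|G|$ solutions to $g_ix^2=hg_i$ (with $g_i$ ranging over $H\backslash G$, $x\in G$, $h\in H$), by analysing how left cosets intersect right cosets inside a fixed double coset; the total solution count is then recognised as $\sum_{x\in G}\pi(x^2)=|G|\cdot FS(\pi)$ since $\pi(x^2)$ counts fixed points of $x^2$ on $H\backslash G$. Your approach instead packages the self-inverse condition as self-pairing of orbitals on $\Omega\times\Omega$ and extracts the count via Cauchy--Frobenius for the combined $G\times\langle\tau\rangle$-action, which is more conceptual and fits naturally into the standard permutation-group-theoretic language of orbitals. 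Your Hecke-algebra variant (trace of the inversion anti-automorphism on $e_H\C G e_H$) is closer in spirit to Frame's original algebraic formulation than either. All three reach the same intermediate identity $s=FS(\pi)$; the paper's version is perhaps the most elementary in that it avoids any auxiliary group action or algebra, while yours makes the structural reason for the Frobenius--Schur sum more transparent.
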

	\begin{proof}
		We follow the ideas of \cite{J.S.Frame}.
		
		First, we show that the number of self-inverse double cosets of $G$ with respect to $H$ is 
		$$\dfrac{\#\{g_ix^2=hg_i\mid g_i\in H\backslash G, x\in G, h\in H\}}{|G|}.$$
		(See Theorem 3.1 of \cite{J.S.Frame}.) It suffices to show that each self-inverse double coset corresponds to $|G|$ solutions to the equation
		\begin{equation}\label{equation-self-inverse}h(g_ixg_i^{-1})=(g_ixg_i^{-1})^{-1},\end{equation}
		which says that the inverse of $t=g_ixg_i^{-1}$ belongs to its own right coset. Each double coset $HgH$ decomposes into right cosets as
		\[HgH=\coprod_{y\in H/(g^{-1}Hg\cap H)}Hgy,\]
		hence each left coset $h'gH\subset HgH$ intersects with each right coset $Hgy$ at $h'g(g^{-1}Hg\cap H)y$, all of which have $d=|g^{-1}Hg\cap H|$ elements. In particular, the inverse of each right coset is a left coset, so it intersects with its own right coset at $d$ elements, which count as $d$ values of $t_i$. Summing over all right cosets in $HgH$, we get $[H: (g^{-1}Hg\cap H)]d=|H|$ solutions to (\ref{equation-self-inverse}) in $HgH$ if it is an self-inverse double coset. Varying the right cosets $g_i\in H\backslash G$, for each solution $(x_0,h_0)\in HgH\times H$ $hx=x^{-1}$ , we get solutions $(g_i^{-1}xg_i, h)$ to $hg_ixg_{i}^{-1}=(g_ixg_{i}^{-1})^{-1}$, which amount to $[G:H]|H|=|G|$ solutions. 
		
		Now let $G$ act on $H\backslash G$ by right multiplication and consider the corresponding permutation representation of $G$, which affords $\Ind_H^G\boldmath{1}_H$ by definition. Since the character value of a permutation representation on every element is the number of its fixed points, we get
		\begin{align*}&\dfrac{\#\{g_ix^2=hg_i\mid g_i\in H\backslash G, x\in G, h\in H\}}{|G|}\\
		=&\dfrac{1}{|G|}\sum_{x\in G}\Ind_H^G\boldmath{1}_H(x^2)\\
		=&FS(\Ind_H^G\boldmath{1}_H(x^2))\\
		=&\sum_{\chi\in \Irr{G}, \langle\chi, \Ind_H^G\boldmath{1}_H\rangle\neq 0}FS(\chi).
		\end{align*}		
	\end{proof}
	Next, we resort to an interesting result of Inglis-Richardson-Saxl \cite{Saxl} on multiplicity free decomposition of the permutation representation $\Ind_H^{S_{2m}}\boldmath{1}_H$. 
	\begin{prop}\label{prop-multiplicity free}
		Let $H=C(h_0), h_0=(1\ 2)(3\ 4)\cdots(2m-1\ 2m)$, then
		\[\Ind_H^{S_{2m}}\boldmath{1}_H=\bigoplus_{|\lambda|=m}S^{2\lambda},\]
		where $S^\nu$ for any partition $\nu$ denotes the Specht module (over $\Q$).  
	\end{prop}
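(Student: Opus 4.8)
The plan is to compute the Frobenius characteristic $\mathrm{ch}\bigl(\Ind_H^{S_{2m}}\mathbf{1}_H\bigr)$ and to recognize it as $\sum_{\lambda\vdash m}s_{2\lambda}$; the proposition then follows at once, since $\mathrm{ch}$ is an isometric isomorphism from the representation ring of $S_{2m}$ onto the degree-$2m$ part of the ring of symmetric functions with $\mathrm{ch}(S^\nu)=s_\nu$, and the Schur functions are linearly independent.

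\emph{Step 1: reduction to a plethysm.} By Proposition \ref{prop-structure of H}, $H\simeq S_2\wr S_m$, sitting inside $S_{2m}$ as the stabilizer of the perfect matching $\{1,2\},\{3,4\},\dots,\{2m-1,2m\}$, so $\Ind_H^{S_{2m}}\mathbf{1}_H$ is the permutation module on the $(2m-1)!!$ perfect matchings of $\{1,\dots,2m\}$. For any subgroup $G\le S_d$ the characteristic of the induced trivial module of a wreath product is given by plethysm,
\[\mathrm{ch}\bigl(\Ind_{G\wr S_n}^{S_{dn}}\mathbf{1}\bigr)=h_n\bigl[\mathrm{ch}(\Ind_G^{S_d}\mathbf{1}_G)\bigr],\]
which one obtains by inducing in stages and then applying the standard character formula for wreath products (as developed in \cite{James-Kerber}). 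Taking $G=S_2$, so that $\mathrm{ch}(\Ind_{S_2}^{S_2}\mathbf{1})=h_2$, gives $\mathrm{ch}\bigl(\Ind_H^{S_{2m}}\mathbf{1}_H\bigr)=h_m[h_2]$.

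\emph{Step 2: Littlewood's identity.} It remains to prove $h_m[h_2]=\sum_{\lambda\vdash m}s_{2\lambda}$. Summing over $m$ and using $\sum_{m\ge0}h_m=\prod_i(1-x_i)^{-1}$ together with the fact that the monomials of $h_2$ are precisely the products $x_ix_j$ with $i\le j$, we obtain $\sum_{m\ge0}h_m[h_2]=\prod_{i\le j}(1-x_ix_j)^{-1}$. Littlewood's identity asserts that this product equals $\sum_\mu s_{2\mu}$, the sum over all partitions $\mu$, and extracting the degree-$2m$ component yields the desired plethysm identity. Littlewood's identity is classical (see Macdonald, \emph{Symmetric Functions and Hall Polynomials}, Ch.~I, \S5): its left-hand side is the generating function enumerating nonnegative integer symmetric matrices, and the identity can be proved either by a variant of the RSK correspondence applied to such matrices, or by a Jacobi--Trudi determinant manipulation.

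Combining the two steps, $\mathrm{ch}\bigl(\Ind_H^{S_{2m}}\mathbf{1}_H\bigr)=\sum_{\lambda\vdash m}s_{2\lambda}$; as the $s_{2\lambda}$ with $\lambda\vdash m$ are pairwise distinct Schur functions each appearing with coefficient $1$, we conclude $\Ind_H^{S_{2m}}\mathbf{1}_H\simeq\bigoplus_{\lambda\vdash m}S^{2\lambda}$, which in particular is multiplicity free with exactly $p(m)$ constituents, in agreement with Proposition \ref{prop-structure of double cosets}. The single nontrivial input is Littlewood's identity (equivalently the plethysm $h_m[h_2]=\sum_\lambda s_{2\lambda}$), and that is where the real content lies; I expect the manipulation of symmetric functions to be routine once that identity is granted. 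A second, more self-contained route would bypass symmetric functions: Proposition \ref{prop-structure of double cosets} already gives $\dim\mathrm{End}_{S_{2m}}\bigl(\Ind_H^{S_{2m}}\mathbf{1}_H\bigr)=p(m)$, so writing $\Ind_H^{S_{2m}}\mathbf{1}_H=\bigoplus_\nu c_\nu S^\nu$ we have $\sum_\nu c_\nu^2=p(m)$; it would then be enough to produce, for each $\lambda\vdash m$, a nonzero $H$-fixed vector in $S^{2\lambda}$ (forcing $c_{2\lambda}\ge1$), since there are exactly $p(m)$ partitions of the form $2\lambda$ and the resulting inequality $\sum_\nu c_\nu^2\ge\sum_\lambda c_{2\lambda}^2\ge p(m)$ must then be an equality, giving $c_{2\lambda}=1$ and $c_\nu=0$ for all other $\nu$. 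Exhibiting that fixed vector explicitly — say from a matching-compatible filling of the Young diagram of $2\lambda$, or from a standard domino tableau of shape $2\lambda$ — is the point where this alternative would require genuine work.
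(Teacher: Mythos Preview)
Your proof is correct. The paper, however, does not give its own proof of this proposition at all: it simply quotes the result from Inglis--Richardson--Saxl \cite{Saxl} and uses it as a black box to count double cosets via Frame's character formula.

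Your main route via symmetric functions is the classical one: the identification $\mathrm{ch}\bigl(\Ind_{S_2\wr S_m}^{S_{2m}}\mathbf{1}\bigr)=h_m[h_2]$ is standard, and Littlewood's identity $\prod_{i\le j}(1-x_ix_j)^{-1}=\sum_{\mu}s_{2\mu}$ (Macdonald, Ch.~I, \S5, Ex.~5) does the rest. This is entirely self-contained once Littlewood's identity is granted. Your second, alternative route --- use $\sum_\nu c_\nu^2=|H\backslash S_{2m}/H|=p(m)$ and then exhibit for each $\lambda\vdash m$ a nonzero $H$-fixed vector in $S^{2\lambda}$ --- is in fact essentially the strategy of \cite{Saxl}: Inglis--Richardson--Saxl show $\langle\Ind_H^{S_{2m}}\mathbf{1}_H,S^{2\lambda}\rangle\ge1$ directly and then close with the dimension identity $\sum_{\lambda\vdash m}\dim S^{2\lambda}=(2m-1)!!=[S_{2m}:H]$, which forces all multiplicities to be exactly $1$ and all other constituents to vanish. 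So your ``alternative'' is the route the paper implicitly relies on through its citation, while your primary symmetric-function argument is a genuinely different (and arguably cleaner) derivation that the paper does not mention.
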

	By Proposition \ref{prop-structure of double cosets}, the double cosets of $S_{2m}$ with respect to $H$ are all self-inverse for $x$ conjugate to $x^{-1}$ in $\Sym_\{2,4,\cdots,2m\}$. Also note that all irreducible representations of symmetric groups are of real type, i.e. $FS(\chi)=1$ for any $\chi\in \Irr S_{2m}$. Then Proposition \ref{prop-multiplicity free} and Proposition \ref{prop-J.S.Frame} show that the number of double cosets $H\backslash S_{2m}/H$ equals
	$$\sum_{\chi\in Irr{G}, \langle\chi, Ind_H^G\boldmath{1}_H\rangle\neq 0}FS(\chi)=\sum_{|\lambda|=m}FS(S^{2\lambda})=\sum_{|\lambda|=m}1=p(m),$$
	the partition number.
	\subsection{Structure of $H\cap gHg^{-1}$ and proof of Theorem \ref{thm-structure theorem}}\label{subsection-finer structure of centralizers}
	With the structure description of double cosets $H\backslash S_{2m}/H$, this section proves Theorem \ref{thm-structure theorem} using an idea of constructing bipartite graph automorphisms introduced by J.P. James \cite{J.P.James}.
	
	Let $\cG=(V,E)$ be a \textit{bipartite} graph (non-directed), i.e. its vertex set $V=V_1\coprod V_2$ is a disjoint union of two \textit{parties} $V_i, i=1,2$ and the edge set $E$ is a collection of (unordered) pairs $\{v_1,v_2\}, v_i\in V_1, i=1,2$. We allow one edge to be duplicated. A graph automorphism is a permutation of vertices that sends edges to edges. Denote $\Aut_b(\cG)$ the set of automorphisms preserving $V_i, i=1,2$. Suppose $\cG$ is $k$-regular, i.e. each vertex belongs to $k$ edges, then $|E|=kl$ for some positive integer $l$. Label the edges by integers between $1$ and $kl$. Define two $k$-partitions of $\{1,\cdots,kl\}$ as 
	\[\alpha_i=\{U_v, v\in V_i\}, i=1,2,\]
	in which $U_v=\{1\leq i\leq kl, v \text{ belongs to } i\}$, the set of all edges containing $v$. Then any automorphism of $\Aut_b(\cG)$ is a permutation of $\{1,\cdots, kl\}$ that preserves the two $k$-partitions $\alpha_1,\alpha_2$. Denote the group of such permutations $(S_{kl})_{\alpha_1,\alpha_2}$, then by definition $\Aut_b(\cG)\leq (S_{kl})_{\alpha_1,\alpha_2}$. 
	
	On the other hand, each permutation of $(S_{kl})_{\alpha_1,\alpha_2}$ is an automorphism of $\Aut_b(\cG)$. This is simply because each part of $\alpha_i$ (a $k$-subset of $\{1,\cdots.kl\}$) corresponds to a vertex in $V_i$, hence a permutation preserving $\alpha_i$ sends a vertex to a vertex, which also sends edges to edges by definition. We summarize Lemma 2.2 and 2.3 of \cite{J.P.James} as follows
	\begin{prop}\label{prop-bipartite automorphisms}
		$(S_{kl})_{\alpha_1,\alpha_2}\simeq\Aut_b(\cG)$.
	\end{prop}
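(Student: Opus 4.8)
The plan is to write down the obvious map and verify it is a group isomorphism; once the labelled bipartite multigraph is encoded correctly, every step is routine. I would regard $\cG$ as nothing more than the label set $\{1,\dots,kl\}$ equipped with the two partitions $\alpha_1=\{U_v:v\in V_1\}$ and $\alpha_2=\{U_v:v\in V_2\}$ into blocks of size $k$, subject to the compatibility that each label lies in exactly one block of $\alpha_1$ and exactly one block of $\alpha_2$ (namely its two endpoints). Using the edge labelling, an element $\sigma\in\Aut_b(\cG)$ is recorded as the permutation it induces on $\{1,\dots,kl\}$; call this assignment $\Phi\colon\Aut_b(\cG)\to S_{kl}$. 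Since composing automorphisms composes the induced edge permutations, $\Phi$ is a group homomorphism, and the statement reduces to showing $\Phi$ is a bijection onto $(S_{kl})_{\alpha_1,\alpha_2}$.

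First I would check $\im\Phi\subseteq(S_{kl})_{\alpha_1,\alpha_2}$: a party-preserving automorphism $\sigma$ carries the $k$ edges at a vertex $v$ bijectively to the $k$ edges at $\sigma(v)$, so $\sigma(U_v)=U_{\sigma(v)}$, and hence $\sigma$ permutes the blocks of $\alpha_1$ and of $\alpha_2$. Next, injectivity of $\Phi$: the key point is that $v\mapsto U_v$ is injective on each party, since two distinct vertices of the same party share no edge (an edge has a single endpoint in each party) and $U_v\neq\emptyset$ because $\cG$ is $k$-regular with $k\ge 1$; thus distinct $U_v$ are distinct nonempty sets. Consequently an automorphism fixing every edge fixes every $U_v$, hence every vertex, hence is trivial.

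For surjectivity, given $\tau\in(S_{kl})_{\alpha_1,\alpha_2}$ I would use that $\tau$ permutes the blocks of each $\alpha_i$, together with the bijection $v\leftrightarrow U_v$ between the blocks of $\alpha_i$ and $V_i$, to obtain a party-preserving vertex permutation $\pi$ characterised by $\tau(U_v)=U_{\pi(v)}$. The one computation needed is that $(\pi,\tau)$ preserves incidence: if the edge labelled $j$ joins $v\in V_1$ to $w\in V_2$ then $j\in U_v\cap U_w$, so $\tau(j)\in\tau(U_v)\cap\tau(U_w)=U_{\pi(v)}\cap U_{\pi(w)}$, which says exactly that the edge labelled $\tau(j)$ joins $\pi(v)$ and $\pi(w)$. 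Hence $\tau$ (with $\pi$) lies in $\Aut_b(\cG)$ and $\Phi(\tau)=\tau$, giving surjectivity; this is, in substance, Lemmas~2.2 and~2.3 of \cite{J.P.James}.

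I do not anticipate a real obstacle: the only thing requiring a little care is the multigraph bookkeeping, i.e. insisting that an automorphism be recorded by its action on the \emph{labelled} edges and not merely on vertices. That is precisely what makes the blocks of $\alpha_i$ correspond bijectively to the vertices of $V_i$ and lets the incidence check above be a one-liner; without it, the duplicated edges allowed in the setup would force an extra choice and the identification with $(S_{kl})_{\alpha_1,\alpha_2}$ would fail to be canonical.
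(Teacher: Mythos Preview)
Your proposal is correct and follows essentially the same approach as the paper: the paper argues both containments directly in the paragraph preceding the proposition (an element of $\Aut_b(\cG)$ permutes edges and preserves both partitions, giving one inclusion; conversely a partition-preserving permutation of edges induces a vertex map via the bijection $v\leftrightarrow U_v$ and hence lies in $\Aut_b(\cG)$), citing Lemmas~2.2 and~2.3 of \cite{J.P.James}. You have simply packaged the same two-way argument as a homomorphism $\Phi$ together with an explicit check of injectivity and surjectivity, and you are somewhat more careful than the paper about the multigraph bookkeeping and the injectivity of $v\mapsto U_v$.
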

	\begin{proof}[Proof of Theorem \ref{thm-structure theorem}]	The one-to-one correspondence $H\backslash S_{2m}/H$ was already established in Proposition \ref{prop-structure of double cosets}.	
	In application of Proposition \ref{prop-bipartite automorphisms} to our case, let $k=2, l=m$, the edges be $1,2,\cdots,2m$, and the two parties of vertices be $\alpha_1=\{\{1,2\},\cdots,\{2m-1,2m\}\}$ and $\alpha_2=\{\{g(1),g(2)\},\cdots,\{g(2m-1),g(2m)\}\}$ for any $g\in S_{2m}$. The edge $i$ connects two vertices (blocks $\{2k-1, 2k\}$'s) that contain $i$. Then by definition, $(S_{2m})_{\alpha_1,\alpha_2}=H\cap gHg^{-1}$. Recall that $H=C(h_0), h_0=(1\ 2)\cdots(2m-1\ 2m)$. By Proposition \ref{prop-structure of double cosets}, the structure of $H\cap gHg^{-1}$ depends only on those $g$ supported on even (or odd) numbers and their cycle type determined by partitions of $M'=\{2,4,\cdots,2m\}$. Hereinafter we denote a partition by $\lambda=\{1^{r_1}\cdots k^{r_k}\}$ which means $\lambda$ has $r_i$ parts equal to $i$ and by $N_\lambda=\sum_{i=1}^{k}r_i$ the number of parts of $\lambda$. If $g\in HxH$ for $x$ in the conjugacy class of $\Sym(M')$ with cycle type $\lambda$ , then the constructed bipartite graph $\cG$ has $N_\lambda$ connected components corresponding to parts of $\lambda$, i.e. cycles of $x$. For instance, the component corresponding to a part $k$ of $\lambda$, which may be expressed as the standard cycle $(2\ 4\cdots\ 2k)\in\Sym(M')$, looks like    
    \[\begin{tikzpicture}[
    node distance = 7mm and 21mm,
    start chain = going below,
    V/.style = {circle, draw, 
    	fill=#1, 
    	inner sep=0pt, minimum size=3mm,
    	node contents={}},
    every fit/.style = {ellipse, draw=#1, inner ysep=-1mm, 
    	inner xsep=5mm},
    ]
    \node(n14)[V=myblue,on chain,
    label={[text=myblue]left:$\{1,2\}$}];
    \node(n13)[V=myblue,on chain,
    label={[text=myblue]left:$\{3,4\}$}];
    \node(n12)[V=myblue,on chain,
    label={[text=myblue]left:$\{5,6\}$}];
    \node(n11)[V=myblue,on chain,
    label={[text=myblue]left:$\{2k-1,2k\}$}];
    \node at ($(n12)!.5!(n11)$) {\vdots};
    
    \node(n24)[V=mygreen,right=0mm and 22mm of n14,
    label={[text=mygreen]right:$\{1,2k\}$}];
    \node(n23)[V=mygreen,right=0mm and 22mm of n13,
    label={[text=mygreen]right:$\{3,2\}$}];
    \node(n22)[V=mygreen,right=0mm and 22mm of n12,
    label={[text=mygreen]right:$\{5,4\}$}];
    \node(n21)[V=mygreen,right=0mm and 22mm of n11,
    label={[text=mygreen]right:$\{2k-1,2k-2\}$}];
    \node at ($(n22)!.5!(n21)$) {\vdots};

    \draw[-, shorten >=1mm, shorten <=1mm]
    (n14) edge (n24)   (n14) edge (n23)
    (n13) edge (n23)   (n13) edge (n22)
    (n12) edge (n22)   
    (n11) edge (n21)
    (n11) edge (n24);
    
    \end{tikzpicture}\]
	When unfolded, it becomes a $2k$-gon
\[\begin{tikzpicture}[
V/.style = {circle, draw, 
	fill=#1, 
	inner sep=0pt, minimum size=3mm,
	node contents={}},
every fit/.style = {ellipse, draw=#1, inner ysep=-1mm, 
	inner xsep=5mm},
]    
\node (n1)[V=myblue,on chain,label={[text=myblue]above:$\{1,2\}$}]{};
\node (n2)[V=mygreen,position=-20:{\nodeDist} from n1,label={[text=mygreen]right:$\{3,2\}$}] {};
\node (n3)[V=myblue,position=-60:{\nodeDist} from n2,label={[text=myblue]right:$\{3,4\}$}] {};
\node (n4)[V=mygreen,position=-100:{\nodeDist} from n3,label={[text=mygreen]right:$\{5,4\}$}] {};
\node (n5)[V=myblue,position=-140:{\nodeDist} from n4,label={[text=myblue]right:$\{5,6\}$}] {};
\node (n6)[,position=-180:{\nodeDist} from n5,label={[text=myblue]right:{}}] {};
\node (n7)[V=mygreen,position=-220:{\nodeDist} from n6,label={[text=mygreen]left:$\{2k-1,2k-2\}$}] {};
\node (n8)[V=myblue,position=-260:{\nodeDist} from n7,label={[text=myblue]left:$\{2k-1,2k\}$}] {};
\node (n9)[V=mygreen,position=-300:{\nodeDist} from n8,label={[text=mygreen]left:$\{1,2k\}$}] {};
	\draw (n7) -- (n8) -- (n9) -- (n1) -- (n2) -- (n3) -- (n4) -- (n5);
	\draw[dashed,shorten >=5pt] (n5) -- (n6);
	\draw[dashed,shorten >=5pt] (n7) -- (n6);
	\draw[dashed,shorten >=5pt] 
	(n1) edge[blue] (n3)
	(n3) edge[blue] (n5)
	(n8) edge[blue] (n1)
	
	(n2) edge[green] (n4)
	(n9) edge[green] (n2)
	(n7) edge[green] (n9);
	\end{tikzpicture}\]
	
	Denote such a bipartite graph by $\cG_k$. Clearly as a proper subgroup of the automorphism group of the above $2k$-gon, i.e. $D_{2k}$, $\Aut_b(\cG_k)$ contains the automorphism group of the $k$-polygon with blue nodes (or equivalently the $k$-gon with green nodes) and dashed edges, i.e. $D_{k}$. Hence $\Aut_b(\cG_k)\simeq D_{k}$, the dihedral group with $2k$ elements. Any automorphism in $\Aut_b(\cG)$ can also permute components of the same size, i.e. those corresponding to cycles of the same length. Thus the above construction using bipartite graphs replicates the definition of wreath product with symmetric groups. Hence for any permutation $x\in\Sym(M')$ with cycle type $\{i^{r}\}$, by Proposition \ref{prop-bipartite automorphisms} we have the wreath product presentation 
	\[H\cap xHx^{-1}\simeq\Aut_b(\cG)\simeq \Aut_b{\cG_i}\wr S_r=D_{i}\wr S_r,\]
	In general for any $g\in HxH$ and $x\in \Sym(M')$ of cycle type $\lambda=\{1^{r_1}2^{r_2}\cdots k^{r_k}\}$,
	we get 
	\[H\cap gHg^{-1}\simeq \bigoplus_{i=1}^k D_{i}\wr S_{r_i},\]
	and in particular, 
	\[|H\cap gHg^{-1}|=\prod_{i=1}^k(2i)^{r_i}r_i!,\]
	simply by $|D_{i}\wr S_{r_i}|=|D_{i}|^{r_i}|S_{r_i}|$. This completes the proof.
	\end{proof}	
	Using Theorem \ref{thm-structure theorem} we can measure the double cosets as follows
	\begin{cor}\label{cor-double coset size}
			For any $g\in HxH$ with $x\in \Sym\{2,4,\cdots,2m\}$ with $x$ of cycle type $\lambda=\{1^{r_1}\cdots k^{r_k}\}$,
			\[|HgH|=|H|[H: H\cap gHg^{-1}]=(2^mm!)^2/(\prod_{i=1}^k(2i)^{r_i}r_i!).\]
	\end{cor}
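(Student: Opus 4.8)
The plan is to assemble the corollary directly from the double-coset counting identity recorded at the start of section \ref{subsection-double coset decomposition} together with the structure result of Theorem \ref{thm-structure theorem}. First I would recall why $|HgH| = |H|\,[H:H\cap gHg^{-1}]$: the double coset $HgH$ is a disjoint union of left cosets of $H$, and the number of these left cosets is the size of the $H$-orbit of $gH$ under left multiplication on $H\backslash S_{2m}$; since the stabilizer of $gH$ in that action is exactly $H\cap gHg^{-1}$, orbit--stabilizer gives $|HgH| = |H|\,[H:H\cap gHg^{-1}] = |H|^2/|H\cap gHg^{-1}|$. This is precisely the formula already displayed at the opening of section \ref{subsection-double coset decomposition}, so it may simply be quoted.

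Next I would substitute the two group orders. By Proposition \ref{prop-structure of H}, $H\simeq C_2\wr S_m$, hence $|H| = 2^m m!$ and $|H|^2 = (2^m m!)^2$. By Theorem \ref{thm-structure theorem}, for $g\in HxH$ with $x\in\Sym\{2,4,\dots,2m\}$ of cycle type $\lambda=\{1^{r_1}\cdots k^{r_k}\}$ we have $H\cap gHg^{-1}\simeq\bigoplus_{i=1}^k D_i\wr S_{r_i}$; since $|D_i\wr S_{r_i}| = |D_i|^{r_i}\,|S_{r_i}| = (2i)^{r_i}\,r_i!$, taking the product over $i$ gives $|H\cap gHg^{-1}| = \prod_{i=1}^k (2i)^{r_i} r_i!$ — indeed this last equality is itself part of the statement of Theorem \ref{thm-structure theorem}. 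Combining the two substitutions yields $|HgH| = (2^m m!)^2 \big/ \prod_{i=1}^k (2i)^{r_i} r_i!$, which is the claim.

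There is essentially no obstacle here; the corollary is a bookkeeping consequence of results already in hand. The one point worth a sanity check is that $\lambda$ is genuinely a partition of $m$, i.e. $\sum_{i=1}^k i\,r_i = m$, so that such an $x\in\Sym\{2,4,\dots,2m\}$ exists at all; this is guaranteed by Proposition \ref{prop-structure of double cosets}, which identifies $H\backslash S_{2m}/H$ with the conjugacy classes of $\Sym\{2,4,\dots,2m\}\cong S_m$, hence with partitions of $m$. That constraint is needed only for well-definedness, not for the algebra. I would also note in passing that summing the resulting expression over all partitions $\lambda$ of $m$ must recover $(2m)!$ by the double-coset decomposition, which provides an independent consistency check and is exactly the identity fed into the distributional analysis of section \ref{section-counting good elements}.
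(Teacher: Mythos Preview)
Your proposal is correct and matches the paper's own treatment: the corollary is stated immediately after Theorem \ref{thm-structure theorem} with no separate proof, since it follows by plugging $|H|=2^m m!$ (Proposition \ref{prop-structure of H}) and $|H\cap gHg^{-1}|=\prod_{i=1}^k(2i)^{r_i}r_i!$ (computed in the proof of Theorem \ref{thm-structure theorem}) into the identity $|HgH|=|H|^2/|H\cap gHg^{-1}|$ from section \ref{subsection-double coset decomposition}. Your write-up simply makes these substitutions explicit.
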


		By Theorem 4.4.8 of James-Kerber \cite{James-Kerber}, the wreath product of a rational finite group with any symmetric group is also rational, hence Theorem \ref{thm-structure theorem} implies
		\begin{cor}\label{cor-rational group}
			All irreducible representations of $H\cap gHg^{-1}$ are realizable over $\Q$.
		\end{cor}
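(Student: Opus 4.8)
The plan is to read the corollary off the structure theorem. By Theorem~\ref{thm-structure theorem}, for $g\in HxH$ with $x\in\Sym\{2,4,\dots,2m\}$ of cycle type $\lambda=\{1^{r_1}\cdots k^{r_k}\}$ one has
\[
H\cap gHg^{-1}\;\simeq\;\bigoplus_{i=1}^{k}D_i\wr S_{r_i},
\]
so it suffices to verify two closure properties of the class $\mathcal{R}$ of finite groups all of whose complex irreducible representations are realizable over $\Q$: first, that $\mathcal{R}$ is stable under finite direct products; second, that $G\in\mathcal{R}$ implies $G\wr S_r\in\mathcal{R}$ for every $r$. Together these reduce the statement to the base case that each dihedral group $D_i$ lies in $\mathcal{R}$.

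For the direct-product step I would use the standard description of the irreducibles of a direct product: every irreducible $\C[A\times B]$-module is an external tensor product $U\boxtimes W$ with $U\in\Irr A$ and $W\in\Irr B$, and if $U$ and $W$ are afforded by $\Q[A]$- and $\Q[B]$-modules $U_0$ and $W_0$, then $U_0\otimes_\Q W_0$ is a $\Q[A\times B]$-model of $U\boxtimes W$; induction on the number of factors handles any finite product. For the wreath-product step I would invoke Theorem~4.4.8 of James--Kerber \cite{James-Kerber}, which is precisely the assertion that $\mathcal{R}$ is closed under $G\mapsto G\wr S_r$: the inputs are that the irreducibles of $S_r$ are themselves defined over $\Q$ (the Specht modules) and that the classical (Clifford-theoretic) construction of $\Irr(G\wr S_r)$ out of $\Irr(G)$ and the $\Irr(S_r)$ respects the ground field. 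With these two steps in hand, the conclusion is formal as soon as the dihedral base case is known.

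The hard part will be that base case. The linear characters of $D_i$ take values in $\{\pm1\}$, so only the $2$-dimensional irreducibles are at issue: realizing $D_i$ as the symmetry group of a regular $i$-gon and sending the rotation to the corresponding planar rotation and the reflection to an anti-diagonal involution produces a representation whose field of character values is $\Q(\zeta^{j}+\zeta^{-j})$, with $\zeta$ a primitive $i$-th root of unity, and one must exhibit an actual $\Q$-form for it (equivalently, a stable lattice). This is where care is genuinely needed, because $\Q(\zeta+\zeta^{-1})=\Q$ holds only for $i\in\{1,2,3,4,6\}$: the cases $D_1=C_2$, $D_2=C_2\times C_2$, $D_3=S_3$, together with $D_4$ and $D_6$, are immediate from explicit integral matrices, whereas for larger $i$ the $2$-dimensional representations are not defined over $\Q$, so a complete argument must either check that such $D_i$ do not occur among the groups $H\cap gHg^{-1}$ or restrict the assertion to the dihedral factors that do. Granting the base case, assembling it with the two closure properties yields the corollary.
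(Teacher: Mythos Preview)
Your overall plan coincides with the paper's one-line argument: apply the structure theorem to write $H\cap gHg^{-1}\simeq\bigoplus_i D_i\wr S_{r_i}$, then cite Theorem~4.4.8 of James--Kerber for closure of the class of rational groups under $G\mapsto G\wr S_r$, together with the easy closure under finite direct products. The paper does not address the base case and treats the rationality of each $D_i$ as given.

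The hesitation in your final paragraph is justified, and the gap is not fillable: the dihedral group $D_i$ of order $2i$ has all its complex irreducibles realizable over $\Q$ if and only if $(\Z/i\Z)^\times=\{\pm1\}$, i.e.\ $i\in\{1,2,3,4,6\}$. For any other $i$ the generator $r$ of the rotation subgroup is conjugate in $D_i$ only to $r^{\pm1}$, so the $2$-dimensional irreducible with character value $\zeta^j+\zeta^{-j}$ on $r$ (with $\zeta$ a primitive $i$-th root of unity and $1\le j<i/2$) has irrational trace and hence admits no $\Q$-form. These factors genuinely occur: the paper's own verification in \S\ref{subsection-structure theorem verification} exhibits $H\cap gHg^{-1}\simeq D_5$ already for $m=5$ (the double coset indexed by the partition $\{5^1\}$), where $2\cos(2\pi/5)=(\sqrt5-1)/2\notin\Q$. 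Thus the corollary as stated is false, and the paper's appeal to James--Kerber breaks at exactly the point you flagged. What does survive is the weaker statement with $\R$ in place of $\Q$: every $D_i$ is ambivalent and its irreducibles are afforded by the explicit real rotation--reflection matrices, and both closure steps go through unchanged over $\R$.
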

    \subsection{Some computational verification of Theorem \ref{thm-structure theorem}}\label{subsection-structure theorem verification}
    For convenience, we denote $g\sim\lambda$ for any $g\in S_{2m}$ and $\lambda$ a partition of $m$, if $g\in HxH$ with $x\in\Sym\{2,4,\cdots,2m\}$ of cycle type $\lambda$.
    
    For the simplest example, if $x\sim \{1^m\}$, then Theorem \ref{thm-structure theorem} gives 
	\[H\cap xHx^{-1}\simeq D_1\wr S_m=C_2\wr S_m,\]
	which coincides with Proposition \ref{prop-structure of H} because of $HxH=H$. 

        For $m=2$, $S_{4}$ as $p(2)=2$ double cosets, the nontrivial of which can choose a representative $x\sim \{2^1\}$, then Theorem \ref{thm-structure theorem} gives
        \[H\cap xHx^{-1}\simeq D_2\simeq K_4,\]
        which coincides with our computation by hand in Example \ref{exam-m=2}.

		For $m=3$, there are $3=p(3)$ double cosets in $H\backslash S_6/H$ with representatives $1, (4\ 5), (2\ 3)(4\ 5)$. Computed by \texttt{GAP} (the \textit{StructureDescription} function), we get
		\[H\cap (4\ 5)H(4\ 5)\simeq C_2\times C_2\times C_2\simeq D_1\times D_2,\]
		and
		\[H\cap (2\ 3)(4\ 5)H(2\ 3)(4\ 5)\simeq S_3\simeq D_3,\]
		where $D_i$ denotes the dihedral group with $2i$ elements and for convenience, we write $C_2$ as $D_1$. Note that $(4\ 5)\sim\{1^12^1\}$ and $(2\ 3)(4\ 5)\sim\{3^1\}$, the structure results by Theorem \ref{thm-structure theorem} coincide with computation by \texttt{GAP}.	
		
		For $m=4$, computed by \texttt{GAP} (the \textit{DoubleCosetRepsAndSizes} function), there are $5=p(4)$ double cosets in $H\backslash S_8/H$ with representatives 
		\[1, (6\ 7)\sim\{1^2 2^1\}, (4\ 5)(6\ 7)\sim\{1^13^1\}, (2\ 3)(6\ 7)\sim\{2^2\}, (2\ 3)(4\ 5)(6\ 7)\sim\{4^1\}.\]
		\texttt{GAP} gives the following structure description in coincidence with Theorem \ref{thm-structure theorem}
		\begin{align*}
		&H\cap(6\ 7)H(6\ 7)\simeq C_2\times C_2\times D_4\simeq (D_1\wr S_2)\times D_2,\\
		&H\cap (2\ 3)(6\ 7)H(2\ 3)(6\ 7)\simeq C_2^4\rtimes C_2\simeq D_2\wr S_2,\\
		&H\cap (4\ 5)(6\ 7)H(4\ 5)(6\ 7)\simeq D_{6}\simeq D_1\times D_3,\\
		&H\cap (2\ 3)(4\ 5)(6\ 7)H(2\ 3)(4\ 5)(6\ 7)\simeq D_4.
		\end{align*}		

        For $m=5$, by \texttt{GAP}, there are $7=p(5)$ double cosets in $H\backslash S_{10}/H$ with representatives 
        \[1,(8\ 9)\sim\{1^32^1\},(6\ 7)(8\ 9)\sim\{1^23^1\}, (4\ 5)(8\ 9)\sim\{1^12^2\},\]
        \[ (4\ 5)(6\ 7)(8\ 9)\sim\{1^14^1\}, (2\ 3)(6\ 7)(8\ 9)\sim\{1^12^13^1\}, (2\ 3)(4\ 5)(6\ 7)(8\ 9)\sim\{5^1\}.\]
        \texttt{GAP} gives the following structure description in coincidence with Theorem \ref{thm-structure theorem}
        \begin{align*}
        &H\cap (8\ 9)H(8\ 9)\simeq C_2\times C_2\times C_2\times S_4\simeq (D_1\wr S_3)\times D_2,\\
        &H\cap (6\ 7)(8\ 9)H(6\ 7)(8\ 9)\simeq D_4\times S_3\simeq (D_1\wr S_2)\times D_3,\\
        &H\cap (4\ 5)(8\ 9)H(4\ 5)(8\ 9)\simeq C_2\times(C_2^4\rtimes C_2)\simeq D_1\times (D_2\wr S_2),\\	
        &H\cap (4\ 5)(6\ 7)(8\ 9)H(4\ 5)(6\ 7)(8\ 9)\simeq C_2\times D_4= D_1\times D_4,\\
        &H\cap (2\ 3)(6\ 7)(8\ 9)H(2\ 3)(6\ 7)(8\ 9)\simeq C_2\times C_2\times S_3\simeq D_1\times D_2\times D_3,\\
        &H\cap (2\ 3)(4\ 5)(6\ 7)(8\ 9)H(2\ 3)(4\ 5)(6\ 7)(8\ 9)\simeq D_{5}.
        \end{align*}
        More computational verification by \texttt{GAP} for $m\geq6$ can also be checked. 
	\section{Count good elements}\label{section-counting good elements}
	With the structural results on $H\cap gHg^{-1}$, we are prepared to count good elements in $S_{2m}$. Recall that $g\in S_{2m}$ is \textit{good} if $|H\cap gHg^{-1}|=O(m^c)$ for some universal constant $c>0$. 
	
	\subsection{Counting with random permutation statistics}\label{subsection-distribution of size of centralizers}
	We show that the distribution of $|H\cap gHg^{-1}|$ happens to be the Ewens' distribution with bias $\theta=\frac{1}{2}$. By definition (see Example 2.19 of Arratia-Barbour-Tavar\'{e} \cite{Arratia}), the Ewens' distribution $\mathrm{ESF(\theta)}$ is the distribution equipped with the following probability density on partitions $\lambda=\{1^{r_1}\cdots k^{r_k}\}$ of $m$  
	\begin{equation}\label{ESF}P_{\theta}(\lambda)=\dfrac{m!}{\theta(\theta+1)\cdots(\theta+m-1)}\prod_{i=1}^{k}\left(\dfrac{\theta}{i}\right)^{r_i}\dfrac{1}{r_i!}.\end{equation}
	By Theorem \ref{thm-structure theorem} and Corollary \ref{cor-double coset size}, the distribution of $|H\cap gHg^{-1}|$ over $g\in S_{2m}$ is equivalent to the following probability density on partitions of  $m$, i.e. for any $x\in \Sym\{2,4,\cdots,2m\}$ of cycle type $\lambda$,
	\begin{equation}\label{equation-probability}P(\lambda)=\dfrac{|HxH|}{|S_{2m}|}=\dfrac{2^{2m}(m!)^2}{(2m)!\prod_{i=1}^k(2i)^{r_i}r_i!}
	=\dfrac{2^mm!}{\prod_{j=1}^{m}(2j-1)}\prod_{i=1}^{k}\left(\dfrac{\frac{1}{2}}{i}\right)^{r_i}\dfrac{1}{r_i!}\end{equation}
	\[=\dfrac{m!}{\prod_{j=1}^m(j-\frac{1}{2})}\prod_{i=1}^{k}\left(\dfrac{\frac{1}{2}}{i}\right)^{r_i}\dfrac{1}{r_i!},\]
	which is exactly $P_{\frac{1}{2}}(\lambda)$ as in (\ref{ESF}).
	
	This turns the study of distribution of $|H\cap gHg^{-1}|$ into study of Ewens' distribution $\rm ESF(\frac{1}{2})$. By Theorem 5.1 of \cite{Arratia}, as $m\rightarrow \infty$, $\mathrm{ESF}(\theta)$ point-wise converges to the joint distribution of independent Poisson distributions $(Z_1,Z_2,\cdots)$ on $\mathbb{N}^\infty$, where $Z_i\sim Po(\theta/i)$ for any $i\geq 1$ with $\rm Prob(Z_i=j)=e^{-\theta/i}\dfrac{(\theta/i)^j}{j!}$. However, the unmanageable errors appearing in \cite{Arratia} between Ewens' distributions and joint Poisson distribution make it inaccessible to calculate the tail distribution of $\rm ESF(\theta)$. In the next section, we use methods of analytic combinatorics to estimate the left tail $P(|H\cap gHg^{-1}|\leq m^c)$, i.e. the probability of good elements.
	\subsection{Left tail of Ewen's distribution}\label{subsection-Head of Ewen's distribution}
	First we define $|H\cap gHg^{-1}|$ as a random variable on partitions of $m$ i.e. let $f: \{\text{partitions of }m\}\rightarrow \R$ be $f(\lambda)=|H\cap gHg^{-1}|=\prod_{i=1}^k(2i)^{r_i}r_i!$ for any partition $\lambda=(1^{r_1}\cdots k^{r_k})$ of $m$ such that $g\in HxH$ for any $x\in \Sym\{2,4,\cdots,2m\}$ of cycle type $\lambda$, by Theorem \ref{thm-structure theorem}. 
	
	For any $a\in\R$, define $W_{a,m}:=\sum_{|\lambda|=m}f(\lambda)^{-a}$. Especially for $a=0$ we get the partition number $W_{0,m}=p(m)\sim \dfrac{1}{4\sqrt{3}m}e^{\pi\sqrt{2m/3}}$ and for $a=1$, $W_{1,m}=\dfrac{(2m)!}{2^{2m}(m!)^2}\sim \dfrac{1}{\sqrt{\pi m}}$ by section \ref{subsection-double coset decomposition}. Also note that $W_{a,m}$ strictly decreases as $a$ increases. In this notation we can write the distribution $P$ defined in (\ref{equation-probability}) as $P(\lambda)=W_{1,m}^{-1}f(\lambda)^{-1}$. 
	
	To estimate $P(f(\lambda)\leq m^c)$, i.e. the probability of good elements, we introduce the moment bound. For any nonnegative random variable $X$ from a sample space $\Omega$ to $\R_{\geq 0}$ with probability distribution $F$, define the $\alpha$-th moment for any $\alpha>0$ by
	\[M_X^\alpha:=\mathbb{E}({X^\alpha})=\int_\Omega X^\alpha(\omega)dF(\omega).\]
	Then by Markov's inequality, we have for any $C>0$,
	\[F(X>C)=F(X^\alpha>C^\alpha)\leq \dfrac{M_X^\alpha}{C^\alpha}.\]
	Since $\alpha$ is arbitrary, we get
	\begin{prop}[Moment bound]\label{prop-moment bound}
		For any $\alpha>0$ and nonnegative random variable $X$ with distribution $F$, 
		\[F(X\geq C)\leq \inf_{\alpha>0} \dfrac{M_X^\alpha}{C^\alpha},\  \forall C>0.\]
	\end{prop}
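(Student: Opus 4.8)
\emph{Proof proposal.} The plan is to obtain the bound as an immediate consequence of Markov's inequality applied to the $\alpha$-th power of $X$, optimized over the free parameter $\alpha$. Fix $C>0$ and $\alpha>0$. First I would note that since $t\mapsto t^\alpha$ is strictly increasing on $[0,\infty)$ and $X$ takes values in $\R_{\geq 0}$, the events $\{X\geq C\}$ and $\{X^\alpha\geq C^\alpha\}$ coincide, so $F(X\geq C)=F(X^\alpha\geq C^\alpha)$.

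Next, Markov's inequality applied to the nonnegative random variable $X^\alpha$ with threshold $C^\alpha>0$ gives $F(X^\alpha\geq C^\alpha)\leq \mathbb{E}(X^\alpha)/C^\alpha = M_X^\alpha/C^\alpha$; concretely this follows by integrating the pointwise inequality $C^\alpha\mathbf{1}_{\{X^\alpha\geq C^\alpha\}}\leq X^\alpha$ against $F$ over $\Omega$. Combining the two identities and the estimate yields $F(X\geq C)\leq M_X^\alpha/C^\alpha$ for every $\alpha>0$. Since the left-hand side is independent of $\alpha$, passing to the infimum over $\alpha>0$ on the right-hand side gives the asserted inequality.

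The main point is that there is essentially no obstacle here: the argument is a one-line optimization of a Chernoff-type bound, and the only things to verify are the monotonicity of the power function and the elementary proof of Markov's inequality. I would also remark, for the use made of this in section \ref{subsection-Head of Ewen's distribution}, that there $\Omega$ is the finite set of partitions of $m$ and $F=P$ is the counting measure $P(\lambda)=W_{1,m}^{-1}f(\lambda)^{-1}$, so every integral above is a finite sum, all quantities $M_X^\alpha=\sum_{|\lambda|=m}f(\lambda)^{\alpha}P(\lambda)$ are finite, and hence the infimum is taken over a nonempty set of finite real numbers; in particular the bound is non-vacuous.
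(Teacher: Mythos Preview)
Your proof is correct and follows exactly the same approach as the paper: apply Markov's inequality to $X^\alpha$ after using the monotonicity of $t\mapsto t^\alpha$ to rewrite the event, then take the infimum over $\alpha>0$. The only difference is that you spell out the justification of Markov's inequality and add a remark on the finite-sample-space application, neither of which changes the argument.
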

	Now for the distribution $P$ defined in (\ref{equation-probability}), the moment bound applied to $X=f^{-1}$ gives for any $c>0$,
	\begin{equation}\label{equation-left tail}P(f\leq m^c)=P(f^{-1}\geq m^{-c})\leq \inf_{\alpha>0}m^{c\alpha}{M_{f^{-1}}^\alpha}=\inf_{\alpha> 0}m^{c\alpha} W_{1,m}^{-1}W_{\alpha+1,m},\end{equation}
	since we have the expectation \[\mathbb{E}{f^{-\alpha}}=W_{1,m}^{-1}\sum_{|\lambda|=m}f(\lambda)^{-\alpha}f^{-1}(\lambda)=W_{1,m}^{-1}\sum_{|\lambda|=m}f(\lambda)^{-(\alpha+1)}=W_{1,m}^{-1}W_{\alpha+1,m}.\]

	Hence the task is to find appropriate estimate of $W_{\alpha+1,m}$ for $\alpha>0$. This is accessible through a hybrid method introduced by Flajolet et al \cite{Flajolet} which we present in section \ref{subsection-hybrid method}.
	\subsubsection{Generating function of $W_{\beta,m}$}\label{subsection-generating function}
	Before applying the hybrid method, it is necessary to introduce the following generating function for any $\beta\in\R$,
	\begin{equation}\label{equation-generating function}
	W_\beta(z)=\sum_{m\geq 0}W_{\beta,m}z^m=\sum_{m\geq 0}\sum_{|\lambda|=m}\dfrac{z^{r_1+2r_2+\cdots+kr_k}}{\prod_{i=1}^k(2i)^{r_i\beta}(r_i!)^{\beta}}=\prod_{i\geq 1}I_\beta(z^i/(2i)^\beta),
	\end{equation}
	where $I_\beta(z)=\sum_{j\geq 0}\dfrac{z^j}{(j!)^\beta}$ defines an entire function of exponential-like. For $\beta>0$, $W_\beta$ is an analytic function in the open unit disk of convergence radius $\geq1$ at the origin, since 
	\begin{equation}\label{equation-convergence radius upper bound}(W_{\beta,m})^{1/m}\leq W_{0,m}^{1/m}=p(m)^{1/m}\sim e^{\sqrt{m}/m}\rightarrow 1, \text{ as } m\rightarrow \infty.\end{equation}
	
	To further determine the convergence radius of $W_\beta(z), \beta>0$, we need a lower bound for $W_{\beta,m}$. For any $\alpha\in\R$, let $\mu_\alpha$ be the distribution on $\{\text{partitions of }m\}$ with $\mu_\alpha(\lambda)=W_{\alpha,m}^{-1}f(\lambda)^{-\alpha}$ for any partition $\lambda$ of $m$. For example, $\mu_0$ is the uniform distribution and $\mu_1$ is the distribution $P=P_{\frac{1}{2}}$ in the notation of Ewen's distribution defined in (\ref{ESF}).	For $0<\gamma<1$, $x^{1/\gamma}$ is a convex function, hence by Jensen's inequality (with expectation $\mathbb{E}_{\mu_\beta}$ over $\mu_\beta$), for any $\alpha,\beta\in\R$,
		\[\left(\mathbb{E}_{\mu_\beta}{f^{-\alpha}}\right)^{1/\gamma}\leq\mathbb{E}_{\mu_\beta}\left((f^{-\alpha})^{1/\gamma}\right),\]
		i.e.
		\[W_{\beta,m}^{-1}\sum_{|\lambda|=m}f^{-\alpha}(\lambda)f^{-\beta}(\lambda)=W_{\beta,m}^{-1}W_{\alpha+\beta,m}\]
		\[\leq\left(W_{\beta,m}^{-1}\sum_{|\lambda|=m}f^{-\alpha/\gamma}f^{-\beta}\right)^{\gamma}=W_{\beta,m}^{-\gamma}W_{\alpha/\gamma+\beta,m}^\gamma.\]
		Thus we get
		\begin{prop}\label{prop-Jensen's inequality}
			For any $\alpha,\beta\in\R$, $0<\gamma<1$, and $m\in\Z_+$,
			\[W_{\alpha+\beta,m}\leq W_{\beta,m}^{1-\gamma}W_{\alpha/\gamma+\beta,m}^\gamma.\]
		\end{prop}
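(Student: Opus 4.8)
The plan is to exhibit $W_{\alpha+\beta,m}$ as a rescaled expectation against the probability measure $\mu_\beta$ defined above, and then to apply Jensen's inequality with the convex power function $t\mapsto t^{1/\gamma}$. First I would note that, since $f(\lambda)=\prod_i(2i)^{r_i}r_i!>0$ and there are only finitely many partitions of $m$, the sum $W_{\beta,m}=\sum_{|\lambda|=m}f(\lambda)^{-\beta}$ is a finite positive real for every $\beta\in\R$, so $\mu_\beta(\lambda)=W_{\beta,m}^{-1}f(\lambda)^{-\beta}$ is a genuine probability distribution on the set of partitions of $m$. Against this distribution the random variable $f^{-\alpha}$ has expectation
\[\mathbb{E}_{\mu_\beta}\!\left[f^{-\alpha}\right]=W_{\beta,m}^{-1}\sum_{|\lambda|=m}f(\lambda)^{-\alpha}f(\lambda)^{-\beta}=W_{\beta,m}^{-1}W_{\alpha+\beta,m},\]
and in the same way $\mathbb{E}_{\mu_\beta}\!\left[f^{-\alpha/\gamma}\right]=W_{\beta,m}^{-1}W_{\alpha/\gamma+\beta,m}$.

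Next, since $0<\gamma<1$ the exponent $1/\gamma$ exceeds $1$, so $t\mapsto t^{1/\gamma}$ is convex on $[0,\infty)$; this is precisely the property that makes Jensen's inequality run in the direction we need. Applying it to the nonnegative random variable $f^{-\alpha}$ gives
\[\left(\mathbb{E}_{\mu_\beta}\!\left[f^{-\alpha}\right]\right)^{1/\gamma}\le \mathbb{E}_{\mu_\beta}\!\left[\left(f^{-\alpha}\right)^{1/\gamma}\right]=\mathbb{E}_{\mu_\beta}\!\left[f^{-\alpha/\gamma}\right],\]
which, after substituting the two expectation identities from the first step, reads $\left(W_{\beta,m}^{-1}W_{\alpha+\beta,m}\right)^{1/\gamma}\le W_{\beta,m}^{-1}W_{\alpha/\gamma+\beta,m}$.

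Finally I would raise both sides of this last inequality to the power $\gamma>0$ and then multiply through by $W_{\beta,m}>0$, obtaining $W_{\alpha+\beta,m}\le W_{\beta,m}^{1-\gamma}W_{\alpha/\gamma+\beta,m}^{\gamma}$, which is the claimed bound. I do not expect any genuine obstacle here: the argument is essentially the chain of displays immediately preceding the statement, repackaged cleanly. The only points worth spelling out are that $\mu_\beta$ really is a probability measure (finiteness and strict positivity of $W_{\beta,m}$, so that no division-by-zero or convergence issue arises) and that $1/\gamma>1$ is exactly what guarantees convexity of the power function, so that Jensen gives the inequality in the required orientation.
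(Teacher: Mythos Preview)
Your proof is correct and follows exactly the same approach as the paper: both arguments apply Jensen's inequality with the convex function $t\mapsto t^{1/\gamma}$ to the random variable $f^{-\alpha}$ under the probability measure $\mu_\beta$, then rearrange. Your write-up is in fact a tidier version of the displayed chain immediately preceding the proposition in the paper, with the well-definedness of $\mu_\beta$ and the convexity hypothesis made explicit.
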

		
		\begin{remark}\label{remark-upper bound}
			For $\beta=0$ and $0<\gamma=\alpha<1$, we get
			\[((\mathbb{E}_{\mu_0} f^{-\alpha}))^{1/\alpha}\leq \mathbb{E}_{\mu_0}(f^{-\alpha})^{1/\alpha}=\dfrac{W_{1,m}}{p(m)},\]
			i.e.
			\[\sum_{|\lambda|=m}f(\lambda)^{-\alpha}=W_{\alpha,m}\leq W_{1,m}^\alpha p(m)^{1-\alpha}.\]
			Let $\alpha=1-\frac{1}{\sqrt{m}}\frac{\sqrt{3}}{\pi\sqrt{2}}t\ln m$ for any $0<t<\frac{1}{2}$, by the asymptotics of $W_{1,m}$, $p(m)$ and $m^{\frac{\ln m}{\sqrt{m}}}=O(1)$, we get $W_{1,m}^\alpha=O(m^{-\frac{1}{2}})$ and $p(m)^{1-\alpha}=O(m^t)$, hence
			\[W_{\alpha,m}\leq O(m^{-\frac{1}{2}+t}).\]
			However, this bound is not sufficient for estimating the left tail in (\ref{equation-left tail}).
		\end{remark}
		\begin{remark}\label{remark-log convexity}
			Proposition \ref{prop-Jensen's inequality} is a log-convex constraint on $W_{\alpha,m}$, since
			\[(1-\gamma)\beta+\gamma(\alpha/\gamma+\beta)=\alpha+\beta.\]
			Especially for $\gamma=\frac{1}{2}$ we get
			\[W_{\alpha+\beta}\leq W_{\beta,m}^{\frac{1}{2}}W_{2\alpha+\beta,m}^{\frac{1}{2}},\]
			or 
			\[W_{2\alpha+\beta,m}\geq W_{\alpha+\beta}^2W_{\beta,m}^{-1}.\]	
		\end{remark}
	
	By the above remark, we can inductively prove 
	\begin{cor}\label{cor-convergence radius}
		For any $\beta>0$, $W_{\beta,m}^{1/m}\rightarrow 1$. Consequently, the convergence radius of $W_\beta(z)$ equals $1$. 
	\end{cor}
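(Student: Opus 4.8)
The plan is to pin down $\lim_{m}W_{\beta,m}^{1/m}$ by sandwiching it between $1$ and $1$, using as anchors the only two exponents at which $W_{\beta,m}$ is known exactly: $W_{0,m}=p(m)$ and $W_{1,m}=\frac{(2m)!}{2^{2m}(m!)^2}\sim\frac{1}{\sqrt{\pi m}}$. The upper bound is essentially free: since $a\mapsto W_{a,m}$ is decreasing, $W_{\beta,m}\le W_{0,m}=p(m)$ for every $\beta>0$, and $p(m)^{1/m}\to 1$ (already noted in \eqref{equation-convergence radius upper bound}, using $\log p(m)\sim\pi\sqrt{2m/3}=o(m)$ from \cite{Hardy-Ramanujan}), so $\limsup_{m}W_{\beta,m}^{1/m}\le 1$. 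Everything then reduces to the lower bound $\liminf_{m}W_{\beta,m}^{1/m}\ge 1$.

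For $0<\beta\le 1$ this is immediate from monotonicity: $W_{\beta,m}\ge W_{1,m}$, and $\log W_{1,m}=-\tfrac12\log(\pi m)+o(1)=o(m)$ forces $W_{1,m}^{1/m}\to 1$, hence $W_{\beta,m}^{1/m}\to 1$ on $(0,1]$. For $\beta>1$ I would bootstrap via the log-convexity recorded in Remark \ref{remark-log convexity}: specializing Proposition \ref{prop-Jensen's inequality} to the parameter value $0$ in place of $\beta$ and $\gamma=\tfrac12$ yields $W_{2\alpha,m}\ge W_{\alpha,m}^{2}\,p(m)^{-1}$ for all $\alpha>0$. Taking $m$-th roots, $W_{2\alpha,m}^{1/m}\ge\bigl(W_{\alpha,m}^{1/m}\bigr)^{2}p(m)^{-1/m}$; so whenever $W_{\alpha,m}^{1/m}\to 1$, the right-hand side tends to $1$, and combined with the universal upper bound we get $W_{2\alpha,m}^{1/m}\to 1$. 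Given an arbitrary $\beta>0$, pick $n\in\Z_{\ge 0}$ with $2^{-n}\beta\le 1$; the case $0<\beta\le 1$ starts the induction and $n$ doublings carry it to $\beta$. This establishes $W_{\beta,m}^{1/m}\to 1$ for all $\beta>0$, and the assertion about the radius of convergence of $W_\beta(z)=\sum_{m}W_{\beta,m}z^m$ from \eqref{equation-generating function} is then just the Cauchy--Hadamard formula, $R=1/\limsup_{m}W_{\beta,m}^{1/m}=1$.

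I do not expect a serious obstacle here; the only point requiring care is that the lower bound not be vacuous. A priori $W_{\beta,m}$ could decay exponentially for large $\beta$, which would make $R=\infty$, and what excludes this is precisely the convexity of $\beta\mapsto\log W_{\beta,m}$ — an instance of Hölder's inequality packaged in Proposition \ref{prop-Jensen's inequality} — anchored at the two exactly computable values $\beta=0,1$. If one prefers to avoid the induction entirely, the same convexity applied once suffices: for $\beta>1$, writing $1=\tfrac{\beta-1}{\beta}\cdot 0+\tfrac1\beta\cdot\beta$ gives $\log W_{1,m}\le\tfrac{\beta-1}{\beta}\log W_{0,m}+\tfrac1\beta\log W_{\beta,m}$, hence $\log W_{\beta,m}\ge\beta\log W_{1,m}-(\beta-1)\log p(m)=o(m)$, and the corollary follows at once.
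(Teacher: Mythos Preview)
Your proof is correct and follows essentially the same route as the paper: the upper bound via $W_{\beta,m}\le p(m)$ and the lower bound via the log-convexity inequality of Remark~\ref{remark-log convexity}, bootstrapped from the known values at $\beta=0,1$. Your treatment is in fact cleaner---you make the base case $0<\beta\le 1$ explicit via $W_{\beta,m}\ge W_{1,m}$, and your one-shot alternative (interpolating $1$ between $0$ and $\beta$) avoids the induction altogether, which the paper's somewhat informal inductive step leaves implicit.
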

	\begin{proof}
		Acknowledging the upper bound (\ref{equation-convergence radius upper bound}), we need only to prove the lower bound. Assume for any $\beta\in(0,\gamma)$, $W_{\beta,m}^{1/m}\rightarrow 1$. By remark \ref{remark-log convexity}, for $\alpha=(\gamma-\beta)/2>0$,
		\[W_{\gamma,m}^{1/m}\geq (W^2_{\beta+\alpha,m})^{1/m}(W_{\beta,m})^{-1/m}\rightarrow 1^2\cdot 1=1,\]
		by induction. 
	\end{proof}
	
	\subsubsection{Exp-log schema for $W_{\beta}(z)$}\label{subsection-exp-log schema} Let $H_\beta(z)=\log(I_\beta(z))=\sum_{l\geq 1}h_{\beta,l}z^l$ ($h_{\beta,0}=0$ since $I_\beta(0)=1$), then (\ref{equation-generating function}) becomes
	\begin{equation}\label{equation-exp-log schema}
	W_\beta(z)=\exp\left(\sum_{i\geq 1}H_\beta\left(\dfrac{z^i}{2i^\beta}\right)\right)=\exp\left(\sum_{l\geq 1}\sum_{i\geq 1}h_{\beta,l}\dfrac{z^{il}}{(2i)^{\beta l}}\right)
	\end{equation}
	\[=\exp\left(\sum_{l\geq 1}\dfrac{h_{\beta,l}}{2^{\beta l}}Li_{\beta l}(z^l)\right),\]
	where $Li_\gamma(z)=\sum_{k\geq 1}\dfrac{z^k}{k^\gamma}$ is the polylogarithm for any $\gamma\in \C$. 
	
    Directly by definition, for $\gamma> 1$, $Li_\gamma(1)<\infty$ and $Li_\gamma(1)$ monotonically decrease to $1$ as
	    $\gamma\rightarrow\infty$. By Dirichlet's criterion, 
	    $\sum_{l>\lfloor\frac{1}{\beta}\rfloor}\dfrac{h_{\beta,l}}{2^{\beta l}}\left(Li_{\beta l}(1)-1\right)$ converges, and
	    \begin{equation}\label{equation-global order estimate 1}
	    \sum_{l>\lfloor\frac{1}{\beta}\rfloor}\dfrac{h_{\beta,l}}{2^{\beta l}}Li_{\beta l}(1)=\sum_{l>\lfloor\frac{1}{\beta}\rfloor}h_{\beta,l}2^{-\beta l}+O(1)
	    \end{equation}
	    \[=H_\beta(2^{-\beta})+O(1)=O(1).\]
	Hence if $\beta>1$, $W_{\beta}(z)$ is bounded in the unit disc $|z|<1$ (the convergence region), or of \textit{global order} $0$ in notation of the next subsection where we introduce the hybrid method in details. The boundedness also prevents us from directly using (Hardy-Littlewood-Karamata) Tauberian theorem to derive asymptotics for $W_{\beta,m}, \beta>1$. 
		    
    Additionally, the following result on singularities of polylogarithms is particularly helpful in this perspective.
	    \begin{lem}[Lemma 5 of \cite{Flajolet}]\label{lem-singularities of polylogarithms 1}
	    	For any $\gamma\in \C$, the polylogarithm $Li_\gamma(z)$ is analytically continuable to the slit plane $\C\smallsetminus\R_{\geq 1}$. Moreover, the singular expansion of $Li_\gamma(z)$ near the singularity $z=1$ for non-integer $\gamma$ is
	    	\begin{equation}\label{equation-polylogarithm singular expansion 1} Li_\gamma(z)=\Gamma(1-\gamma)\tau^{\gamma-1}+\sum_{j\geq 0}\dfrac{(-1)^j}{j!}\zeta(\gamma-j)\tau^j,\end{equation}
	    	where $\tau:=-\log z=\sum_{l\geq 1}\dfrac{(1-z)^l}{l}$, $\Gamma(z)$ is the gamma function and $\zeta(z)$ is the Riemann zeta function. For $m\in\Z_+$, 
	    	\begin{equation}\label{equation-polylogarithm singular expansion 2}Li_m(z)=\dfrac{(-1)^m}{(m-1)!}\tau^{m-1}(\log\tau-H_{m-1})+\sum_{j\geq 0,j\neq m-1}\dfrac{(-1)^j}{j!}\zeta(m-j)\tau^j,\end{equation}
	    	where $H_k$ is the harmonic number $1+1/2+\cdots+1/k$. 	
	    \end{lem}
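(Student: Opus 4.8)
The plan is to derive both assertions---analytic continuation to $\C\smallsetminus\R_{\ge 1}$ and the singular expansions (\ref{equation-polylogarithm singular expansion 1}) and (\ref{equation-polylogarithm singular expansion 2})---from a single Mellin--Barnes integral, following the Mellin-transform method of Flajolet--Sedgewick. Put $z=e^{-\tau}$, so that $Li_\gamma(e^{-\tau})=\sum_{k\ge 1}e^{-k\tau}k^{-\gamma}$ and, by the series defining $\tau$ in the statement, $\tau=-\log z=\sum_{l\ge1}(1-z)^l/l$. First I would record the Mellin transform in the variable $\tau$: using the crude bounds $Li_\gamma(e^{-\tau})=O(e^{-\tau})$ as $\tau\to\infty$ and $Li_\gamma(e^{-\tau})=O(\tau^{-A})$ as $\tau\to 0^+$ (split the defining sum at $k\sim 1/\tau$), one may interchange sum and integral for $\mathrm{Re}(s)$ large to obtain
\[\int_0^\infty Li_\gamma(e^{-\tau})\,\tau^{s-1}\,d\tau=\sum_{k\ge 1}\frac{1}{k^\gamma}\cdot\frac{\Gamma(s)}{k^s}=\Gamma(s)\,\zeta(s+\gamma).\]
Mellin inversion then gives, for $c$ to the right of all singularities of the right-hand side,
\[Li_\gamma(e^{-\tau})=\frac{1}{2\pi i}\int_{c-i\infty}^{c+i\infty}\Gamma(s)\,\zeta(s+\gamma)\,\tau^{-s}\,ds.\]
Since $\Gamma$ decays exponentially on vertical lines (Stirling: $|\Gamma(\sigma+it)|\asymp e^{-\pi|t|/2}$) while $\zeta(s+\gamma)$ grows only polynomially and $|\tau^{-s}|=|\tau|^{-\mathrm{Re}(s)}e^{(\arg\tau)\mathrm{Im}(s)}$, this integral converges and is analytic in $\tau$ on a sector about the positive real axis; combined with the classical representation $Li_\gamma(z)=\frac{1}{\Gamma(\gamma)}\int_0^\infty t^{\gamma-1}z(e^t-z)^{-1}\,dt$ (valid for $\mathrm{Re}(\gamma)>0$, $z\notin\R_{\ge1}$) and the recursion $Li_{\gamma-1}(z)=z\frac{d}{dz}Li_\gamma(z)$ to descend in $\gamma$, this yields the analytic continuation of $z\mapsto Li_\gamma(z)$ to $\C\smallsetminus\R_{\ge1}$ for every $\gamma\in\C$.

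For the singular expansion I would shift the contour leftward past $\mathrm{Re}(s)=-N-\tfrac12$, collecting residues; the shifted remainder is $O(\tau^{N+1/2})$, so the resulting series is a genuine asymptotic expansion as $\tau\to0$, i.e. as $z\to1$. The integrand $\Gamma(s)\zeta(s+\gamma)\tau^{-s}$ acquires a contribution from the simple pole of $\zeta(s+\gamma)$ at $s=1-\gamma$ (residue $1$ there, value $\Gamma(1-\gamma)$) and from the simple poles of $\Gamma(s)$ at $s=-j$, $j\ge0$, of residue $(-1)^j/j!$. When $\gamma\notin\Z$ these poles are all distinct, so summing residues produces exactly
\[Li_\gamma(z)=\Gamma(1-\gamma)\tau^{\gamma-1}+\sum_{j\ge0}\frac{(-1)^j}{j!}\zeta(\gamma-j)\tau^j,\]
which is (\ref{equation-polylogarithm singular expansion 1}).

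When $\gamma=m\in\Z_+$, the pole of $\zeta(s+m)$ at $s=1-m$ collides with the pole of $\Gamma(s)$ at $s=-(m-1)$, creating a double pole, while the poles $s=-j$ with $j\ne m-1$ stay simple and contribute $\frac{(-1)^j}{j!}\zeta(m-j)\tau^j$ as before. To evaluate the double-pole residue I would set $s=-(m-1)+u$ and expand
\[\Gamma(s)=\frac{(-1)^{m-1}}{(m-1)!}\Bigl(u^{-1}+\psi(m)+O(u)\Bigr),\quad \zeta(s+m)=u^{-1}+\gamma_0+O(u),\quad \tau^{-s}=\tau^{m-1}\bigl(1-u\log\tau+O(u^2)\bigr),\]
where $\gamma_0$ is Euler's constant; reading off the coefficient of $u^{-1}$ in the product gives residue $\frac{(-1)^{m-1}}{(m-1)!}\tau^{m-1}\bigl(\psi(m)+\gamma_0-\log\tau\bigr)$, and since $\psi(m)=-\gamma_0+H_{m-1}$ the Euler constants cancel, leaving $\frac{(-1)^{m}}{(m-1)!}\tau^{m-1}\bigl(\log\tau-H_{m-1}\bigr)$. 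Summing this with the simple-pole contributions yields precisely (\ref{equation-polylogarithm singular expansion 2}).

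The termwise Mellin computation and the Laurent bookkeeping are routine. The two points needing genuine care---and the main obstacle---are: (i) justifying the contour manipulations uniformly for $z\to1$ from within the slit plane, i.e. widening the region of validity of the inverse-Mellin integral from $|\arg\tau|<\pi/2$ (the bound that Stirling alone gives) to $|\arg\tau|\le\pi-\delta$ by bending the contour into rays of angle slightly exceeding $\pm\pi/2$, where $\Gamma$ still enforces exponential decay; and (ii) the coalescing-poles analysis for integer $\gamma$, where the $\log\tau$ term and the harmonic number $H_{m-1}$ emerge only after correctly combining the digamma value from $\Gamma$, the Laurent constant of $\zeta$ at $1$, and the expansion of $\tau^{-s}$.
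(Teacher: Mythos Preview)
The paper does not supply a proof of this lemma at all: it is quoted verbatim as Lemma~5 of Flajolet et al.\ \cite{Flajolet} and used as a black box. So there is no ``paper's own proof'' to compare against. Your proposal is correct and is, in fact, precisely the Mellin-transform argument that Flajolet et al.\ themselves use to establish the result: compute the Mellin transform of $\tau\mapsto Li_\gamma(e^{-\tau})$ as $\Gamma(s)\zeta(s+\gamma)$, invert, shift the contour, and read off residues. Your handling of the double pole at $s=-(m-1)$ when $\gamma=m\in\Z_+$ is accurate, including the cancellation of $\gamma_0$ via $\psi(m)=-\gamma_0+H_{m-1}$.

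One small remark: your point~(i) about widening the sector of validity beyond $|\arg\tau|<\pi/2$ is the right concern, but for the purposes of this paper the expansion is only ever used radially (real $\tau\to 0^+$, i.e.\ $z\to 1^-$ along the real axis) or in a $\Delta$-domain where $|\arg(1-z)|$ is bounded away from $\pi$, so the basic Stirling decay on vertical lines already suffices. The full analytic continuation to $\C\smallsetminus\R_{\ge1}$ is more cleanly obtained from the integral $Li_\gamma(z)=\frac{z}{\Gamma(\gamma)}\int_0^\infty \frac{t^{\gamma-1}}{e^t-z}\,dt$ together with the differentiation recursion, as you indicate.
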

	    In (\ref{equation-polylogarithm singular expansion 1}) (similar to (\ref{equation-polylogarithm singular expansion 2})), the first term is the singular part for $\gamma$ with real part $\rm Re\gamma\leq1$ and the regular remainder tends to $\zeta(\gamma)=Li_{\gamma}(1)$ if $\rm Re\gamma>1$, as $\tau\rightarrow 0$ (or $z\rightarrow 1$). The lemma indicates that for $0<\beta<1$, Tauberian theorem is also not directly applicable to $W_\beta(z)$, since $e^{a(-\log z)^{\beta-1}}\gg (1-|z|)^{-a}$ for any $a>0$, i.e. is of infinite global order. In section \ref{section-Tail of Ewens' distribution}, we will introduce asymptotics of coefficients of this type through a saddle point method  handled by E .M. Wright \cite{Wright}. 
	    
	    Note that $Li_\gamma(z^k)$ only has singularities at $k$-th roots $\xi_k$ of unity, the above lemma gives the corresponding singular expansion
	    	\begin{equation}\label{equation-singular expansion of polylogarithms at roots}
	    	Li_m(z^k)=\dfrac{(-1)^m}{(m-1)!k^{m-1}}(k\tau)^{m-1}(\log(k\tau)-H_{m-1})
	        \end{equation}
	        \[+\sum_{j\geq 0,j\neq m-1}\dfrac{(-1)^j}{j!k^j}\zeta(m-j)(k\tau)^j,\]
	        which becomes a series of $(1-z/\xi_k)$ by substitution
	        \[k\tau=-k\log(z/\xi_k)=\sum_{l\geq 1}\dfrac{k}{l}(1-z/\xi_k)^l.\]

	\subsection{Proof of Theorem \ref{thm-left tail} by hybrid method asymptotics for $W_{\beta,m}$}\label{subsection-hybrid method}
	We first introduce some necessary notions following Flajolet et al \cite{Flajolet}.
	\begin{defn}\label{defn-global order}
		The \textit{global order} of an analytic function $f(z)$ in the open unit disc, is a number $a\leq 0$ such that $|f(z)|=O((1-|z|)^a), \forall |z|<1$. 
	\end{defn}
	Since for any $\beta>1$, $W_\beta(z)$ is bounded in the unit disc, its global order is zero. It can be shown by Cauchy's integral formula that a function $f(z)$ of global order $a\leq 0$ has coefficients satisfying $[z^n]f(z)=O(n^{-a})$, see section 1.1 of \cite{Flajolet}.
	\begin{defn}\label{defn-log-power series}
		A \textit{log-power function} at $1$ is a finite sum of the form
		\[\sigma(z)=\sum_{k=1}^{r}c_k(-\log(1-z))(1-z)^{\alpha_k},\]
		where $\alpha_1<\cdots<\alpha_k$ and each $c_k$ is a polynomial. A log-power function at a finite set of points $Z=\{\zeta_1,\cdots,\zeta_m\}$, is a finite sum
		\[\Sigma(z)=\sum_{j=1}^m\sigma_j\left(\dfrac{z}{\zeta_j}\right),\]
		where $\sigma_j$ is a log-power function at $1$.
	\end{defn}
	Since $Li_0(z)=z/(1-z), Li_1(z)=-\log(1-z)$, a log-power function can be seen as approximation by combinations of these two polylogarithms. Asymptotics of coefficient of log-power functions are known, see Lemma 1 of \cite{Flajolet}.
	\begin{defn}\label{defn-smoothness}
		Let $h(z)$ be analytic in $|z|<1$ and $s$ be a nonnegative integer. $h(z)$ is said to be \textit{$\mathcal{C}^s$-smooth} on the unit disc, or of class $\mathcal{C}^s$, if for all $k=0,\cdots, s$, its $k$-th derivative $h^{(k)}(z)$ defined for $|z|<1$ admits a continuous extension to $|z|=1$.
	\end{defn}
	The smoothness condition relates to the coefficients of a function in an obvious way: if $h(z)=\sum_{n\geq 0}h_nz^n$ with $h_n=O(n^{-s-1-\delta})$ for some $\delta>0$ and $s\in \Z_{\geq 0}$, then it is $\mathcal{C}^s$-smooth. Conversely, we have the Darboux's transfer (Lemma 2 of \cite{Flajolet}): if $h(z)$ is $\cC^s$-smooth, then $h_n=o(n^{-s})$. By (\ref{equation-global order estimate 1}) and the easy differentiation formula $Li_{\gamma}'(z)=Li_{\gamma-1}(z)/z$, we can see that for any $\beta\geq 2$, $W_{\beta}(z)$ is at least $\cC^{\lfloor\beta\rfloor-2}$-smooth on the unit disc.
	
	\begin{defn}\label{defn-log-power extension}
		An analytic function $Q(z)$ in the open unit disc is said to admit a	\textit{log-power expansion} of class $\mathcal{C}^t$ if there exist a finite set of points $Z = \{\zeta_1,\cdots,\zeta_m\}$ on the unit circle $|z| = 1$ and a log-power function $\Sigma(z)$ at the points of $Z$ such
		that $Q(z)-\Sigma(z)$ is $\mathcal{C}^t$–smooth on the unit circle.    	
	\end{defn}
	By (\ref{equation-global order estimate 1}) and Lemma \ref{lem-singularities of polylogarithms 1}, $W_\beta(z)$ has non-trivial log-power expansion only for $\beta=1$ and for $0<\beta<1$ there exists no such expansion.
	\begin{defn}\label{defn-hybridization}
		Let $f(z)$ be analytic in the open unit disc. For $\zeta$ a point on the unit circle, we define the \textit{radial expansion} of $f$ at $\zeta$ with order $t\in\R$ as the smallest (in
		terms of numbers of monomials) log-power function $\sigma(z)$ at $\zeta$, provided it exists,
		such that
		\[f(z) = \sigma(z) + O((z-\zeta)^t),\]
		when $z=(1-x)\zeta$ and $x$ tends to $0^+$. The quantity $\sigma(z)$ is written
		\[asymp(f(z),\zeta,t).\]
	\end{defn}
	Now we are prepared to introduce the main theorem of the hybrid method.
	\begin{prop}[Theorem 2 of \cite{Flajolet}]\label{thm-hybrid}
		Let $f(z)$ be analytic in the open unit disc $D$, of
		finite global order $a\leq 0$, and such that it admits a factorization $f=P\cdot Q$, with
		$P,Q$ analytic in $D$. Assume the following conditions on $P$ and $Q$, relative to a
		finite set of points $Z = \{\zeta_1, . . . , \zeta_m\}$ on the unit circle $\partial D$:\\
		\textbf{D1}: The ``Darboux factor" $Q(z)$ is $\mathcal{C}^s$–smooth on $\partial D$ ($s\in \Z_{\geq 0}$).\\
		\textbf{D2}: The ``singular factor" $P(z)$ is analytically continuable to an indented domain of the form $\mathfrak{D}=\cap_{j=1}^m (\zeta_j\cdot\Delta)$, where a $\Delta$-domain is $\Delta(R,\phi):=\{z\in\C\mid |z|< R, \phi< arg(z-1)<2\pi-\phi, z\neq 1\}$ for some radius $R>1$ and angle $\phi\in (0,\frac{\pi}{2})$. For some non-negative real number $t_0$, it admits, at any $\zeta_j\in Z$, an asymptotic form (a log-power expansion of class $\mathcal{C}^{t_0}$)
		\[P(z)=\sigma_j(z/\zeta_j)+O((z-\zeta_j)^{t_0})\ (z\rightarrow \zeta_j, z\in\mathfrak{D}),
		\]
		where $\sigma_j(z)$ is a log-power function at $1$.\\
		\textbf{D3}: $t_0>u_0:=\lfloor\frac{s+\lfloor a\rfloor}{2}\rfloor$.\\
		Then $f$ admits radial expansions at every $\zeta_j\in Z$ with order $u_0=\lfloor\frac{s+\lfloor a\rfloor}{2}\rfloor$. The coefficients of $z^n$ of $f(z)$ satisfy:   
		\[[z^n]f(z)=[z^n]A(z)+o(n^{-u_0}),\]
		where $A(z):=\sum_{j=1}^m asymp(f(z),\zeta_j,u_0)$.
	\end{prop}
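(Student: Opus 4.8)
The strategy is to combine singularity analysis (Flajolet--Odlyzko transfer) with Darboux's method, applying each to the factor for which it is suited. One starts from Cauchy's coefficient formula $[z^n]f(z)=\frac{1}{2\pi i}\oint_{|z|=1-1/n} f(z)\,z^{-n-1}\,dz$; finiteness of the global order $a\le 0$ legitimizes this and already yields $[z^n]f=O(n^{-a})$. Since $A(z)=\sum_{j}\mathrm{asymp}(f(z),\zeta_j,u_0)$ is a log-power function at the points of $Z$, the classical transfer theorem for such functions (Lemma 1 of \cite{Flajolet}) gives $[z^n]A$ explicitly as a combination of terms $n^{-\alpha-1}(\log n)^{e}$. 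So it suffices to prove $[z^n](f-A)=o(n^{-u_0})$.

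The local step is where the two hypotheses are married. Near a singular point $\zeta_j$, \textbf{D2} gives $P(z)=\sigma_j(z/\zeta_j)+O((z-\zeta_j)^{t_0})$, while the $\mathcal{C}^s$-smoothness of $Q$ in \textbf{D1} yields the radial Taylor expansion $Q(z)=\sum_{k=0}^{s}\frac{Q^{(k)}(\zeta_j)}{k!}(z-\zeta_j)^{k}+o((z-\zeta_j)^{s})$. Multiplying, $f=PQ=\widetilde\sigma_j(z/\zeta_j)+R_j(z)$ near $\zeta_j$, where $\widetilde\sigma_j$ is the log-power function obtained by expanding $\sigma_j(z/\zeta_j)\cdot\big(\sum_{k\le s}\tfrac{Q^{(k)}(\zeta_j)}{k!}(z-\zeta_j)^{k}\big)$ and deleting the monomials of exponent exceeding $u_0-1$ (these have coefficients $o(n^{-u_0})$, so dropping them is harmless), and $R_j$ collects the three error contributions: $\sigma_j\cdot o((z-\zeta_j)^s)$, $O((z-\zeta_j)^{t_0})\cdot Q$, and the truncated tail of $\widetilde\sigma_j$. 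Because the lowest exponent occurring in $\sigma_j$ is bounded below in terms of the global order $a$ (as $f=PQ$ has order $a$ and $Q$ is continuous on the closed disc), the first of these is $o((z-\zeta_j)^{s+\lfloor a\rfloor})$, and the arithmetic $u_0=\lfloor(s+\lfloor a\rfloor)/2\rfloor$ together with \textbf{D3} ($t_0>u_0$) forces $R_j(z)=o((z-\zeta_j)^{u_0})$ along the radius. A direct check identifies this $\widetilde\sigma_j$ with $\mathrm{asymp}(f(z),\zeta_j,u_0)$, so $A=\sum_j\widetilde\sigma_j(\cdot/\zeta_j)$.

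For the global step, the obstruction is that $P$ continues analytically to the indented domain $\mathfrak{D}=\bigcap_j(\zeta_j\cdot\Delta)$ whereas $Q$ lives only in the open disc, so the Cauchy contour cannot be pushed outside $\partial D$ as a whole. Instead one partitions $\partial D$ into small arcs around each $\zeta_j$ and complementary arcs. Near $\zeta_j$ the contour is deformed into $\zeta_j\cdot\Delta$, where $f-A=PQ-A$ continues and, by the local step, is $o((z-\zeta_j)^{u_0})$; the Flajolet--Odlyzko transfer lemma then contributes $o(n^{-u_0})$. On the complementary arcs $P$ is analytic and bounded up to the circle, so $f-A$ inherits the $\mathcal{C}^s$-smoothness of $Q$ there, and integrating by parts $s$ times (Darboux) contributes $o(n^{-s})=o(n^{-u_0})$ since $u_0\le s$ (here $\lfloor a\rfloor\le 0$). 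Summing the pieces gives $[z^n](f-A)=o(n^{-u_0})$, which is the assertion.

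The main obstacle is exactly this incompatibility of regularities: $P$ admits genuine analytic continuation past the unit circle, $Q$ only finitely many continuous boundary derivatives, and the product must be handled through a single Cauchy integral. The delicate bookkeeping is verifying that, after replacing $Q$ by its degree-$s$ radial Taylor polynomial, the residual error of $f=PQ$ truly decays faster than $n^{-u_0}$ at every scale of the optimised contour near each $\zeta_j$; this is where the halving in $u_0=\lfloor(s+\lfloor a\rfloor)/2\rfloor$ is unavoidable, and where \textbf{D3} turns out to be the sharp hypothesis ensuring the singular correction $\sigma_j$ is known to enough precision for the required cancellation.
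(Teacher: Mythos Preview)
The paper does not prove this proposition: it is quoted verbatim as Theorem~2 of Flajolet et~al.\ \cite{Flajolet} and used as a black box, with no argument supplied. So there is no ``paper's own proof'' to compare against; your sketch is attempting something the author deliberately delegated to the reference.

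That said, your plan is broadly in the spirit of the actual proof in \cite{Flajolet}: one does marry singularity analysis for $P$ with Darboux-type smoothness estimates for $Q$, and the halving $u_0=\lfloor(s+\lfloor a\rfloor)/2\rfloor$ does arise from balancing the two error scales. One point to be more careful about: your justification for why the cross-term $\sigma_j\cdot o((z-\zeta_j)^s)$ is $o((z-\zeta_j)^{s+\lfloor a\rfloor})$ leans on the claim that the lowest exponent in $\sigma_j$ is bounded below by $\lfloor a\rfloor$ via the global order of $f=PQ$, but $Q$ being continuous (hence bounded above) on $\overline{D}$ does not by itself bound $Q$ away from zero, so the global order of $P$ alone is not immediately controlled by that of $f$. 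In the original argument this is handled differently: one first reduces via Theorem~1 of \cite{Flajolet} to the case of a function of known global order that is itself $\mathcal{C}^t$-smooth after subtracting a log-power function, and the halving comes from an explicit contour splitting (a circle of radius $1-1/n$ cut at distance $n^{-1/2}$ from each $\zeta_j$), where the Darboux integration-by-parts handles the far arcs and the crude global-order bound handles the near arcs. Your contour description pushes into $\mathfrak{D}$ near each $\zeta_j$, which is closer to pure singularity analysis than to the hybrid argument actually used; it can be made to work, but the balance that produces $u_0$ is then less transparent.
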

	
	Now we turn to approximating the coefficients of $W_\beta(z),\beta>1$, to the order $o(n^{-u_0})$ for some $u_0\in\Z_+$ which will be specified later as needed. We follow the hybrid method in close steps. 
	
	\subsubsection{Darboux factor}\label{subsection-Darboux factor}. By the theorem we should choose a Darboux factor of $\cC^s$-smooth for $s=2u_0$, note that the global order of $W_\beta(z)$ is zero. Provided the exp-log schema (\ref{equation-exp-log schema}), we can factorize $W_{\beta}(z)$ into
	\[W_\beta(z)=\exp\left(\sum_{l<\lfloor\frac{2u_0+2}{\beta}\rfloor}\dfrac{h_{\beta,l}}{2^{\beta l}}Li_{\beta l}(z^l)\right)\cdot\exp\left(\sum_{l\geq\lfloor\frac{2u_0+2}{\beta}\rfloor}\dfrac{h_{\beta,l}}{2^{\beta l}}Li_{\beta l}(z^l)\right)\]
	\[=e^{U(z)}\cdot e^{V(z)}.\]
	Since for $l\geq \lfloor\frac{2u_0+2}{\beta}\rfloor$, $\beta l\geq2u_0+2=s+2$, $Li_{\beta l}$ has all $k=0,\cdots, s$, its $k$-th derivatives admit a continuous extension onto the unit circle. Hence by Dirichlet's criterion as (\ref{equation-global order estimate 1}), $V(z)$ is $\cC^{s}$-smooth and we can take the Darboux factor as $Q(z)=e^{V(z)}$.
	
	\subsubsection{Singular factor}\label{subsection-singular factor}. Clearly we should take $P(z)=e^{U(z)}$ as the singular factor. $U(z)=\sum_{l<\lfloor\frac{2u_0+2}{\beta}\rfloor}\dfrac{h_{\beta,l}}{2^{\beta l}}Li_{\beta l}(z^l)$ as a truncation of the infinite sum, only has singularities at the $l$-th roots of unity for $l\leq \lfloor\frac{2u_0+2}{\beta}\rfloor-1$, by Lemma \ref{lem-singularities of polylogarithms 1}. This is to say $P(z)$ is analytically continuable to the intersection of $\Delta$-domains pointed at those roots, which form the set $Z$. Also the lemma readily shows that $P(z)$ admits the required asymptotic expansion to any order at each point of $Z$.
	
	Hence by the theorem, $W_\beta(z)$ for any $\beta>1$ admits a radial expansion at any point of $Z$ with the chosen order $u_0$ and the hybrid method could give us the wanted asymptotics for $W_{\beta,m}$ once the radial expansions is calculated explicitly at each singularity. To simplify calculation, we set $u_0=\lfloor\beta\rfloor$ so that we only need to consider the expansion at $l$-th roots of unity for $l\leq \left\lfloor\frac{2u_0+2}{\beta}\right\rfloor-1$, which evaluates as follows
	\[\left\lfloor\frac{2\lfloor\beta\rfloor+2}{\beta}\right\rfloor-1=\begin{cases}
	&2\quad\text{if }1<\beta\leq\frac{4}{3},\\
	&1\quad\text{if }\frac{4}{3}<\beta<2,\\
	&2\quad\text{if }\beta=2,\\
	&1\quad\text{if }\beta>2.
	\end{cases}\]
	In application, we mainly concern about the cases where $\beta\in\Z_{\geq 2}$ and $\beta\rightarrow 2^{-}$.
	\subsubsection{The expansion at $z=1,\beta\in\Z_{\geq 2}$}\label{subsection-expansion at 1} We first consider $\beta\in\Z_{\geq 2}$. Note that for any (real part) $\Re\gamma>1$, $\zeta(\gamma)=Li_{\gamma}(1)$ and
	\[W_\beta(1)=\exp\left(\sum_{l\geq 1}\dfrac{h_{\beta,l}}{2^{\beta l}}Li_{\beta l}(1)\right),\]
	by taking out $W_\beta(1)$ and using Lemma \ref{lem-singularities of polylogarithms 1} we get ($\tau=-\log z$)
	\begin{equation}\label{equation-log-power expansion}W_\beta(z)=W_\beta(1)\exp\left(\sum_{l\geq 1}\dfrac{h_{\beta,l}}{2^{\beta l}}\dfrac{(-1)^{\beta l}l^{\beta l-1}}{(\beta l-1)!}\tau^{\beta l-1}(\log\tau+\log l-H_{\beta l-1})\right)\end{equation}
	\[\cdot\exp\left(\sum_{l\geq 1}\dfrac{h_{\beta,l}}{2^{\beta l}}\sum_{j\geq 1, j\neq \beta l-1}\dfrac{(-1)^j}{j!}\zeta(\beta l-j)l^j\tau^j\right)\]
	\[=W_\beta(1)\exp\left(A_\beta(\tau)\log\tau+B_{\beta}(\tau)+\delta_\beta(\tau)\right)\]
	\[=W_\beta(1)+W_\beta(1)\sum_{n=1}^\infty\dfrac{1}{n!}\left(A_\beta(\tau)\log\tau+B_{\beta}(\tau)+\delta_\beta(\tau)\right)^n,\]
	in which $A_\beta,B_\beta,\delta_\beta$ are series of $\tau$ correspondingly. 
	
	Note that $\tau=-\log z=\sum_{l=1}^\infty\dfrac{(1-z)^l}{l}$, to approximate $W_\beta(z)$ by log-power functions at $z=1$ to the order $u_0=\lfloor\beta\rfloor$ is to approximate it to the order $O(\tau^\beta)$. Simply we have
	\[A_\beta(\tau)=\dfrac{(-1)^\beta h_{\beta,1}}{2^\beta(\beta-1)!}\tau^{\beta-1}+O(\tau^{2\beta-1}),\]
	\[B_{\beta}(\tau)=\dfrac{(-1)^\beta h_{\beta,1}}{2^\beta(\beta-1)!}(-H_{\beta-1})\tau^{\beta-1}+O(\tau^{2\beta-1}),\]
	\[\delta_\beta(\tau)=\sum_{j=1}^{\beta-1}\dfrac{(-1)^j}{j!}\tau^j\left(\sum_{l\geq 1, \beta l-1\neq j}\dfrac{h_{\beta,l}}{2^{\beta l}}\zeta(\beta l-j)l^j\right)+O(\tau^\beta)\]
	\[=\sum_{j=1}^{\beta-1}\dfrac{(-1)^jH_{\beta,j}}{j!}\tau^j+O(\tau^\beta),\]
	where $H_{\beta,j}=\sum_{l\geq 1, \beta l-1\neq j}\dfrac{h_{\beta,l}}{2^{\beta l}}\zeta(\beta l-j)l^j$ are convergent series.
	
	Hence in (\ref{equation-log-power expansion}), we only need to care about the following terms
	\[A_\beta(\tau)\log\tau,\ B_\beta(\tau),\  \sum_{n=1}^{\beta-1}\dfrac{1}{n!}\delta_\beta^n(\tau).\]
	We investigate the log-power expansion of these three terms separately.
	
	First we write $\log\tau$ as
	\[\log\tau=\log\left((1-z)\sum_{l=0}^\infty\dfrac{(1-z)^l}{l+1}\right)=\log(1-z)+\log\left(1+\sum_{l=1}^\infty\dfrac{(1-z)^l}{l+1}\right)\]
	\[=\log(1-z)+O(1-z).\]
   Then
   \[A_\beta(\tau)\log\tau=\dfrac{(-1)^\beta h_{\beta,1}}{2^\beta(\beta-1)!}\tau^{\beta-1}\log(1-z)+O(\tau^\beta)\]
   \[=\dfrac{(-1)^\beta h_{\beta,1}}{2^\beta(\beta-1)!}\left(\sum_{l=1}^\infty\dfrac{(1-z)^l}{l}\right)^{\beta-1}\log(1-z)+O(\tau^\beta)\]
   \[=\dfrac{(-1)^\beta h_{\beta,1}}{2^\beta(\beta-1)!}\left((1-z)^{\beta-1}+\dfrac{\beta-1}{2}(1-z)^\beta\right)\log(1-z)+O(\tau^\beta).\]

   The other two terms $B_\beta(\tau)$ and $\delta_\beta(\tau)$ do not involve $\log(1-z)$, hence for large enough $n$, do not contribute to $[z^n]W_\beta(z)$ by the following lemma
   \begin{lem}[Lemma 1 of \cite{Flajolet}]\label{lem-coefficients of log-powers}
   	The general shape of coefficients of a log-power function is computable by the two rules:
   	\[[z^n](1-z)^\alpha\sim\dfrac{1}{\Gamma(-\alpha)}n^{-\alpha-1}, \alpha\notin\Z_{\geq 0},\]
   	\[[z^n](1-z)^r(-\log(1-z))^k\sim(-1)^rk(r!)n^{-r-1}(\log n)^{k-1}, r\in\Z_{\geq 0}, k\in\Z_+.\]
   \end{lem}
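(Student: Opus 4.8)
The plan is to reduce both estimates to the generalized binomial theorem together with the classical ratio asymptotics for the Gamma function, with the logarithmic powers produced by differentiating in an auxiliary parameter. For the first rule I would start from the expansion of $(1-z)^\alpha$ (principal branch, value $1$ at $z=0$):
\[
[z^n](1-z)^\alpha=(-1)^n\binom{\alpha}{n}=\frac{\Gamma(n-\alpha)}{\Gamma(n+1)\,\Gamma(-\alpha)},
\]
the right-hand side being the honest value when $\alpha\notin\Z_{\geq 0}$ and equal to $0$ once $n>\alpha$ when $\alpha\in\Z_{\geq 0}$ (consistently with $1/\Gamma(-\alpha)=0$, which is exactly the regime relevant here). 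Since Stirling's formula gives $\Gamma(n-\alpha)/\Gamma(n+1)=n^{-\alpha-1}\bigl(1+O(1/n)\bigr)$, the claimed $[z^n](1-z)^\alpha\sim n^{-\alpha-1}/\Gamma(-\alpha)$ is immediate for $\alpha\notin\Z_{\geq 0}$.

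For the second rule I would exploit that $\log(1-z)$ is the derivative of $(1-z)^\alpha$ in $\alpha$: from $(1-z)^\alpha=e^{\alpha\log(1-z)}$ one gets $\partial_\alpha^{k}(1-z)^\alpha=(1-z)^\alpha\bigl(\log(1-z)\bigr)^k$, so, extracting coefficients,
\[
[z^n]\bigl((1-z)^r(-\log(1-z))^k\bigr)=(-1)^k\,\partial_\alpha^{k}g_n(\alpha)\big|_{\alpha=r},\qquad g_n(\alpha):=\frac{\Gamma(n-\alpha)}{\Gamma(n+1)\,\Gamma(-\alpha)}.
\]
I would then factor $g_n=\phi\cdot R_n$ with $\phi(\alpha):=1/\Gamma(-\alpha)$ and $R_n(\alpha):=\Gamma(n-\alpha)/\Gamma(n+1)$. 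Reading off the residue $(-1)^r/r!$ of $\Gamma$ at $-r$ shows $\phi$ has a simple zero at $\alpha=r$ with $\phi(r)=0$ and $\phi'(r)=(-1)^{r+1}r!$, while $R_n$ is entire in $\alpha$ with $R_n(r)\sim n^{-r-1}$ and, more precisely, $\partial_\alpha^{j}R_n(\alpha)\sim(-\log n)^{j}\,n^{-\alpha-1}$ near $\alpha=r$ (differentiate $\log R_n(\alpha)=-(\alpha+1)\log n+O(1/n)$, or use $\partial_\alpha R_n=-R_n\,\psi(n-\alpha)$ with $\psi(n-\alpha)=\log n+O(1/n)$). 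Applying Leibniz's rule,
\[
g_n^{(k)}(r)=\sum_{i=0}^{k}\binom{k}{i}\phi^{(i)}(r)\,R_n^{(k-i)}(r),
\]
the $i=0$ term vanishes, the terms with $i\geq 2$ only carry $(\log n)^{k-i}$ with $k-i\leq k-2$ and are therefore negligible, and the dominant term $i=1$ contributes $k\,(-1)^{r+1}r!\,R_n^{(k-1)}(r)\sim k\,(-1)^{r+1}(-1)^{k-1}r!\,(\log n)^{k-1}n^{-r-1}$. Multiplying by $(-1)^k$ and collecting the signs yields $[z^n]\bigl((1-z)^r(-\log(1-z))^k\bigr)\sim(-1)^{r}k\,(r!)\,n^{-r-1}(\log n)^{k-1}$, as claimed (the degenerate case $k=1$ being already visible from $g_n^{(1)}(r)=\phi'(r)R_n(r)$).

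The one place where care is genuinely required is licensing the differentiation of the asymptotics in $\alpha$: I must know that $R_n(\alpha)=n^{-\alpha-1}\bigl(1+\varepsilon_n(\alpha)\bigr)$ with $\varepsilon_n(\alpha)=O(1/n)$ \emph{uniformly} on a fixed complex disc around $\alpha=r$, which follows from the uniform version of Stirling's expansion for $\log\Gamma$ on vertical strips bounded away from the negative real axis; since $\varepsilon_n$ is then analytic with a uniform bound, Cauchy's estimates transfer the $O(1/n)$ bound to all of its $\alpha$-derivatives, and this is exactly what makes the Leibniz bookkeeping rigorous and the subdominant terms provably of smaller order. An alternative route would be to run the Flajolet--Odlyzko transfer argument directly, writing $[z^n]f(z)=\frac{1}{2\pi i}\oint f(z)\,z^{-n-1}\,dz$ and deforming onto a Hankel contour pinched at $z=1$ with the substitution $z=1+t/n$; this handles the whole family $(1-z)^\alpha(\log(1-z))^k$ at one stroke but is heavier machinery than needed for the two formulas stated here.
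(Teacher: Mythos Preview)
Your argument is correct. The paper itself does not prove this lemma: it is quoted verbatim as Lemma~1 of the Flajolet--Fusy--Gourdon--Panario--Pouyanne reference, so there is no in-paper proof to compare against. In that reference the asymptotics are obtained via the singularity-analysis transfer machinery (the Hankel-contour route you mention at the end), which treats the entire scale $(1-z)^\alpha(\log\tfrac{1}{1-z})^k$ uniformly. Your approach is more elementary and self-contained: the first rule is the bare Gamma-ratio asymptotic, and for the second rule you generate the logarithmic factor by differentiating $(1-z)^\alpha$ in $\alpha$, then read off the leading term from the Leibniz expansion of $g_n(\alpha)=\phi(\alpha)R_n(\alpha)$ at the simple zero $\alpha=r$ of $\phi=1/\Gamma(-\alpha)$. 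The sign bookkeeping ($\phi'(r)=(-1)^{r+1}r!$, the $(-1)^{k-1}$ from $R_n^{(k-1)}$, and the outer $(-1)^k$) is carried out correctly and lands on $(-1)^r k\,r!\,n^{-r-1}(\log n)^{k-1}$.

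The only genuinely delicate step---justifying that the $j$-th $\alpha$-derivative of $R_n(\alpha)=\Gamma(n-\alpha)/\Gamma(n+1)$ really is $\sim(-\log n)^j n^{-\alpha-1}$---you handle exactly the right way: Stirling gives $R_n(\alpha)=n^{-\alpha-1}(1+\varepsilon_n(\alpha))$ with $\varepsilon_n$ analytic and $O(1/n)$ uniformly on a disc around $r$, and Cauchy's estimates then bound every $\alpha$-derivative of $\varepsilon_n$ by $O(1/n)$ as well, so the formal differentiation of the asymptotic is legitimate. This parameter-differentiation trick is lighter than the full transfer theorem and perfectly adequate for the two specific rules stated; the contour method buys you uniformity over the whole log-power scale and sharper error terms, which the present paper does not need.
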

   Note that $\Gamma(z)$ has poles at negative integers which makes the first formula in the lemma coincide with the obvious fact that $(1-z)^\alpha, \alpha\in\Z_{\geq0}$ do not contribute to asymptotics of coefficients eventually. Combined with the above calculation, we get 
   \begin{equation}\label{equation-coefficients of log-powers 1}
   [z^n]A_\beta(\tau)\log\tau=[z^n]\dfrac{(-1)^\beta h_{\beta,1}}{2^\beta(\beta-1)!}\left((1-z)^{\beta-1}+(\beta-1)(1-z)^\beta\right)\log(1-z)+o(n^{-\beta})
  \end{equation}
   \[=\dfrac{h_{\beta,1}}{2^\beta n^\beta}+o(n^{-\beta})=(2n)^{-\beta}+o(n^{-\beta}),\]
   the last of which recalls from $\sum_{l\geq 1}h_{\beta,l}z^l=\log(I_\beta(z))=\log\left(\sum_{j\geq 0}z^j/(j!)^\beta\right)$ that $h_{\beta,1}=1$ for any $\beta\in\R$. In general, the coefficients $h_{\beta,l}$ can be computed by Fa\`{a} di Bruno's formula. Hence we get the expansion for $W_\beta(z)$ at $z=1$ in this shape.
   \subsubsection{The expansion at $z=1, \beta>1, \beta\notin\Z_{\geq 0}$}\label{subsection-expansion at 1, beta goes to 2}
   By Lemma \ref{lem-singularities of polylogarithms 1} we get ($\tau=-\log z$)
   \begin{equation}\label{equation-Wbeta expansion at 1, beta goes to 2}
   W_\beta(z)=W_\beta(1)\exp\left(\sum_{l\geq 1}\dfrac{h_{\beta,l}l^{\beta l-1}}{2^{\beta l}}\Gamma(1-\beta l)(\tau)^{\beta l-1}\right)
   \end{equation}
   \[\cdot\exp\left(\sum_{l\geq 1}\dfrac{h_{\beta,l}}{2^{\beta l}}\sum_{j\geq 1}\dfrac{(-1)^j}{j!}\zeta(\beta l-j)l^j\tau^j\right)\]
   \[=W_\beta(1)\exp\left(A_\beta(\tau)+\delta_\beta(\tau)\right),\]
   in which (recall that $h_{\beta,1}=1$)
   \[A_\beta(\tau)=\dfrac{h_{\beta,1}}{2^\beta}\Gamma(1-\beta)\tau^{\beta-1}+O(\tau^{2\beta-1})\]
   \[=\dfrac{\Gamma(1-\beta)}{2^\beta}(1-z)^{\beta-1}+O((1-z)^\beta)\]
   and $\delta_\beta(\tau)$ involves only integer powers of $(1-z)$. Hence by Lemma \ref{lem-coefficients of log-powers}, we only need to concern about $A_\beta(\tau)$ and 
   \[[z^n]A_\beta(\tau)=\dfrac{\Gamma(1-\beta)}{2^\beta\Gamma(1-\beta)}n^{-\beta}+o(n^{-\beta})=\dfrac{1}{2^\beta n^\beta}+o(n^{-\beta}).\]
   \subsubsection{The expansion at $z=-1$}\label{subsection-expansion at -1}
   By Lemma \ref{lem-singularities of polylogarithms 1}, only $Li_{2\beta l}(z^{2l})$ in (\ref{equation-exp-log schema}) contribute singularities at $z=-1$, hence contribute to the asymptotics of $W_{\beta,n}$ to the order $O(n^{-2\beta})$ by \ref{subsection-expansion at 1} and \ref{subsection-expansion at 1, beta goes to 2}. 
   
   Thus combining \ref{subsection-Darboux factor}-\ref{subsection-expansion at 1, beta goes to 2} and \ref{subsection-expansion at -1}, we conclude from the hybrid method Theorem \ref{thm-hybrid} that
   \begin{prop}\label{prop-beta bigger than 1} For any $\beta>1$,
   	\[W_{\beta,m}=\dfrac{W_{\beta}(1)}{2^\beta m^\beta}+o(m^{-\beta}).\]
   \end{prop}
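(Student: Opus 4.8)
The plan is to apply the hybrid method of Proposition~\ref{thm-hybrid} to $f(z)=W_\beta(z)$, following the factorization prepared in sections~\ref{subsection-Darboux factor} and \ref{subsection-singular factor}. Fix an integer $u_0$ with $u_0\geq\beta$ — concretely $u_0=\beta$ when $\beta\in\Z_{\geq 2}$ and $u_0=\lceil\beta\rceil$ otherwise — and set $s=2u_0$. Split the exp-log schema (\ref{equation-exp-log schema}) at $l=\lfloor(2u_0+2)/\beta\rfloor$, giving $W_\beta=P\cdot Q=e^{U(z)}e^{V(z)}$ with $U$ the truncation carrying the low-index polylogarithms and $V$ the tail. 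The first task is to verify the hypotheses of Proposition~\ref{thm-hybrid}. The global order of $W_\beta$ is $a=0$ by the boundedness in the unit disc noted after (\ref{equation-global order estimate 1}); condition \textbf{D1} holds because every polylogarithm appearing in $V$ has index $\beta l\geq s+2$, so (by Dirichlet's criterion as in (\ref{equation-global order estimate 1}), together with $Li_{\gamma}'(z)=Li_{\gamma-1}(z)/z$) the function $V$, and hence $Q=e^{V}$, is $\cC^{s}$-smooth on the unit circle; condition \textbf{D2} holds because, by Lemma~\ref{lem-singularities of polylogarithms 1}, $P=e^{U}$ is analytically continuable to a common $\Delta$-domain indented at the finite set $Z$ of $l$-th roots of unity with $l\leq\lfloor(2u_0+2)/\beta\rfloor-1$, and admits log-power expansions of arbitrary order $t_0$ at each point of $Z$, so condition \textbf{D3} ($t_0>u_0$) is automatic.

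Granting the hypotheses, Proposition~\ref{thm-hybrid} yields
\[
[z^m]W_\beta(z)=[z^m]A(z)+o(m^{-u_0}),\qquad A(z)=\sum_{\zeta_j\in Z}\mathrm{asymp}\big(W_\beta(z),\zeta_j,u_0\big),
\]
and since $u_0\geq\beta$ the remainder is $o(m^{-\beta})$. It remains to read off $A(z)$. At $\zeta_j=1$ the radial expansion is exactly the one computed in section~\ref{subsection-expansion at 1} (for integer $\beta$) and section~\ref{subsection-expansion at 1, beta goes to 2} (for non-integer $\beta$): after pulling out $W_\beta(1)=\exp\left(\sum_{l\geq 1}\frac{h_{\beta,l}}{2^{\beta l}}Li_{\beta l}(1)\right)$, the only log-power monomial contributing to coefficient asymptotics is — by Lemma~\ref{lem-coefficients of log-powers} — the one proportional to $(1-z)^{\beta-1}$, multiplied by $\log(1-z)$ precisely when $\beta$ is an integer; using $h_{\beta,1}=1$ and the computation (\ref{equation-coefficients of log-powers 1}) (and its non-integer counterpart) its coefficient of $z^m$ is $W_\beta(1)(2m)^{-\beta}+o(m^{-\beta})$, while every other monomial in $\mathrm{asymp}(W_\beta(z),1,u_0)$ has exponent strictly exceeding $\beta-1$ (arising from higher powers of $\tau$, from products of singular terms, or from the regular series $B_\beta,\delta_\beta$ of (\ref{equation-log-power expansion})) and hence contributes $o(m^{-\beta})$. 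At the other points of $Z$ — primitive $d$-th roots of unity with $d\geq 2$, e.g.\ $z=-1$ — only the terms $Li_{\beta l}(z^l)$ of (\ref{equation-exp-log schema}) with $d\mid l$ are singular, as explained in section~\ref{subsection-expansion at -1}, so the leading singular exponent there is $\beta l-1\geq 2\beta-1>\beta-1$ and those contributions are $O(m^{-2\beta})=o(m^{-\beta})$. Assembling the three groups of contributions gives $W_{\beta,m}=[z^m]W_\beta(z)=W_\beta(1)/(2^\beta m^\beta)+o(m^{-\beta})$.

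The main obstacle lies in the error bookkeeping. One must first choose $u_0$ large enough — not merely $\lfloor\beta\rfloor$ — so that the hybrid-method remainder $o(m^{-u_0})$ is genuinely $o(m^{-\beta})$; and then, within each radial expansion $\mathrm{asymp}(W_\beta(z),\zeta_j,u_0)$, one must check that every monomial apart from the single dominant $(1-z)^{\beta-1}$ term — including those generated by expanding $\exp\big(A_\beta(\tau)\log\tau+B_\beta(\tau)+\delta_\beta(\tau)\big)$ as in (\ref{equation-log-power expansion}) — falls into $o(m^{-\beta})$ after applying Lemma~\ref{lem-coefficients of log-powers}. The verification of \textbf{D1}, i.e.\ the $\cC^s$-smoothness of the Darboux factor obtained by termwise differentiation of the polylogarithm tail, is routine but should be carried out uniformly on the closed unit disc.
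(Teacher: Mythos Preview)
Your proposal is correct and follows precisely the paper's approach: the same Darboux/singular factorization of $W_\beta$, the same appeal to the hybrid-method Theorem~\ref{thm-hybrid}, and the same radial-expansion computations at the roots of unity carried out in sections~\ref{subsection-expansion at 1}--\ref{subsection-expansion at -1}. Your choice $u_0=\lceil\beta\rceil$ for non-integer $\beta$ is in fact slightly more careful than the paper's $u_0=\lfloor\beta\rfloor$, which as you note only yields a remainder $o(m^{-\lfloor\beta\rfloor})$ rather than the claimed $o(m^{-\beta})$; apart from this bookkeeping refinement the two arguments are identical.
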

   \begin{remark}
   	We omit the calculation of $W_\beta(1)$ for now, but according to Proposition 4 of \cite{Flajolet}, it should be less than $4.26341/2^\beta$ for $\beta\geq 2$. Also note that by Stirling's formula $W_{1,m}\sim \frac{1}{\sqrt{\pi m}}$, an abrupt jump of order in $n$. This is caused by $W_\beta(1)\rightarrow \infty$ as $\beta\rightarrow 1$.
   \end{remark}
\begin{proof}[Proof of Theorem \ref{thm-left tail}]
   Now by the moment bound from Proposition \ref{prop-moment bound}, for $P$ defined as (\ref{equation-probability}) and $f$ the random variable on $\{\text{patitions of }m\}$ defined at the beginning of subsection \ref{subsection-Head of Ewen's distribution}, and for any $c>0,\alpha>0$, we have
   	\[P(f<m^c)\leq m^{c\alpha} W_{1,m}^{-1}W_{\alpha+1,m}=m^{c\alpha}\cdot O(m^{-1/2-\alpha})=O(m^{-1/2+(c-1)\alpha}).\]
   	In particular since for any $c>0$ there always exists $\alpha$ small enough such that $(c-1)\alpha<1/2$, we have 
   	\[P(f<m^{c})\rightarrow 0, \text{ as } m\rightarrow \infty,\]
   which proves Theorem \ref{thm-left tail}.
\end{proof}
	\section{Proof of Theorem \ref{thm-right tail} by Wright's expression}\label{section-Tail of Ewens' distribution}

	Again by Markov's inequality, for any $c>0, 0<\beta<1$ and expectation $\mathbb{E}_P$ on the probability measure $P$ defined in (\ref{equation-probability}),
	\begin{equation}\label{equation-right tail probability}P\left(f(\lambda)> m^c\right)=P\left(f^{1-\beta}(\lambda)>m^{c(1-\beta)}\right)\leq \dfrac{1}{m^{c(1-\beta)}}\mathbb{E}_P(f^{1-\beta})\end{equation}
	\[=m^{-c(1-\beta)}\sum_{|\lambda|=m}\left(\prod_{i=1}^k(2i)^{r_i}r_i!\right)^{1-\beta}\dfrac{2^{2m}(m!)^2}{(2m)!\prod_{i=1}^k(2i)^{r_i}r_i!}\]
	\[=m^{-c(1-\beta)}W_{1,m}^{-1}W_{\beta,m}.\]
	Only when $\beta$ tends $1$ could the above inequality give an appropriate bound for $P(f(\lambda)> m^c)$. The upper bound of $W_{\beta,m}$ in remark \ref{remark-upper bound} can only best possibly give 
	\[P(f(\lambda)> m^c)=O(1),\]
	for $\beta=1-\frac{1}{\sqrt{m}}\frac{\sqrt{3}}{\pi\sqrt{2}}t\log m$ and any $0<t<\frac{1}{2}$. Hence we need more precise asymptotics for $W_{\beta,m}, 0<\beta<1$.

    Let $\frac{1}{2}<\beta<1$, we can split $W_\beta(z)$ as of (\ref{equation-exp-log schema}) into 
    \begin{equation}\label{equation-Wbeta factorization, beta less than 1}W_\beta(z)=\exp\left(\sum_{l\geq 1}\dfrac{h_{\beta,l}}{2^{\beta l}}Li_{\beta l}(z^l)\right)=\exp\left(2^{-\beta}Li_\beta(z)\right)\cdot e^{V_\beta(z)},\end{equation}
    where $V_\beta(z)=\sum_{l\geq 2}\dfrac{h_{\beta,l}}{2^{\beta l}}Li_{\beta l}(z^l)$ and also note that $h_{\beta,1}=1$. By calculation using the hybrid method in subsection \ref{subsection-Head of Ewen's distribution}, it is clear that
    \begin{equation}\label{equation-hybrid asymptotics 1}[z^n]e^{V_\beta(z)}=O(n^{-2\beta}).\end{equation}
    For the first factor, by Lemma \ref{lem-singularities of polylogarithms 1}, we get ($\tau=-\log z$)
    \begin{equation}\label{equation-Wbeta expression 1}\exp\left(2^{-\beta}Li_\beta(z)\right)=\exp\left(2^{-\beta}\left(\Gamma(1-\beta)\tau^{\beta-1}+\zeta(\beta)+\delta_\beta(\tau)\right)\right),\end{equation}
    where
    \[\delta_\beta(\tau)=\sum_{j\geq 1}\dfrac{(-1)^j}{j!}\zeta(\beta-j)\tau^j.\]
    Similar to \ref{subsection-expansion at 1}, since $\delta_\beta(\tau)$ ($e^{\delta_\beta(\tau)}$) only involves integer powers of $(1-z)$, by Lemma \ref{lem-coefficients of log-powers} it does not contribute to the asymptotics of $[z^n]\exp\left(2^{-\beta}Li_\beta(z)\right)$ in order of $n$.
    Thus it is essential to approximate the coefficients of \[U_\beta(z)=\exp\left(2^{-\beta}\Gamma(1-\beta)(-\log z)^{\beta-1}\right).\] 
    
    Together with the factorization (\ref{equation-Wbeta factorization, beta less than 1}) and asymptotics (\ref{equation-hybrid asymptotics 1}), this gives
    \begin{equation}\label{equation-Wbeta expression 2}W_{\beta,m}=[z^m]W_\beta(z)=Ce^{2^{-\beta}\zeta(\beta)}\sum_{k=0}^m[z^n]U_\beta(z)[z^{m-n}]e^{V_\beta(z)}.\end{equation}
    Now we focus on the asymptotics of $[z^n]e^{2^{-\beta}\zeta(\beta)}U_\beta(z)$.      
    We notice that functions of same type with $U_\beta$ were already handled in 1930s by E. M. Wright \cite{Wright}.
    \begin{prop}[Wright's expansions, Theorem 5,6,7 of \cite{Wright}]\label{prop-Wright's expansions}
    	For any $a,b,c\in\C, a\neq 0$ and $\rho>0$, let
    	\[\chi(z)=\frac{z^{c}}{(-\log(z))^{b}} \exp \left(\dfrac{a}{(-\log(z))^{\rho}}\right),\]
    	and 
    	\[F(z)=\sum_{n=\lceil \Re c\rceil+1}^{\infty}(n-c)^{b-1}\phi(a(n-c)^\rho)z^n,\]
    	in which $\Re c$ is the real part of $c$ and 
    	\[\phi(z)=\sum_{l=0}^\infty\dfrac{z^l}{\Gamma(l+1)\Gamma(\rho l+b)}.\]
    	Then $F(z)$ forms the singular part of $\chi(z)$ and $G(z)=F(z)-\chi(z)$ is a regular function around $z=1$ where it behaves uniformly in terms of $a$ and $\rho$. Moreover, define the asymptotic expansion
    	\[H(z)=z^{1/2-b}e^{(1+1/\rho)z}\left(\sum_{j=0}^r\dfrac{(-1)^ja_j}{z^j}+O(\dfrac{1}{|z|^{r+1}})\right),\]
    	where the term $O(|z|^{r+1})$ and $a_j$ are uniformly bounded for $\rho>-1$, for example,
    	\[{a_{0}=\{2 \pi(\rho+1)\}^{-\frac{1}{2}}},\quad {a_{1}=\frac{12 b^{2}-12 b(\rho+1)+(\rho+2)(2 \rho+1)}{24(\rho+1)\{2 \pi(\rho+1)\}^{\frac{1}{2}}}}.\]
    	For $arg(z)=\xi, |\xi|\leq\pi-\epsilon$, let
    	\[Z=(\rho|z|)^{1 /(\rho+1)} e^{i\xi/(\rho+1)},\]
    	then $\phi(z)$ has the asymptotics (by a saddle point analysis which Wright did not perform in \cite{Wright} but in \cite{Wright Bessel})
    	\[\phi(z)=H(Z),\]
    	and the error term in $H$ depends on $\epsilon$.    	
    \end{prop}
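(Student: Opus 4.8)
The plan is to follow E.~M. Wright's original saddle-point analysis, since the statement merely collects Theorems 5--7 of \cite{Wright} together with the asymptotics of $\phi$ supplied in \cite{Wright Bessel}; I will only indicate the architecture of the argument.

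First I would write the coefficient as a Cauchy integral $[z^n]\chi(z)=\frac{1}{2\pi i}\oint \chi(z)\,z^{-n-1}\,dz$ on a small circle and deform the contour towards the essential singularity at $z=1$, passing to the variable $\tau=-\log z$ so that $z^{-n}=e^{n\tau}$ and the integrand takes the shape $e^{n\tau}\,\tau^{-b}\,e^{a\tau^{-\rho}}$ times a factor analytic at $z=1$. Since the singularity is essential rather than of algebraic-logarithmic type, the transfer theorems of singularity analysis used in section \ref{subsection-hybrid method} do not apply; instead the integral is dominated by a saddle point of the exponent $n\tau+a\tau^{-\rho}$, located at $\tau_\ast\asymp(\rho a/n)^{1/(\rho+1)}\to 0$ as $n\to\infty$. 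Carrying out the steepest-descent evaluation, and matching the resulting series termwise --- this is precisely the bookkeeping that the shift $n\mapsto n-c$ and the prefactor $z^c$ accomplish --- one recognizes the coefficients of the ``singular part'' $F(z)$ as the generalized Bessel/Wright function $\phi(z)=\sum_{l\ge 0}z^l/(\Gamma(l+1)\Gamma(\rho l+b))$ evaluated at $a(n-c)^\rho$, with the difference $G(z)=F(z)-\chi(z)$ analytic across $z=1$.

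Next I would establish the large-argument asymptotics of $\phi$ itself. Using the Hankel representation $1/\Gamma(w)=\frac{1}{2\pi i}\oint e^t t^{-w}\,dt$ term by term gives $\phi(z)=\frac{1}{2\pi i}\oint e^{t+zt^{-\rho}}\,t^{-b}\,dt$, whose exponent $t+zt^{-\rho}$ has a saddle at $t=Z=(\rho|z|)^{1/(\rho+1)}e^{i\arg(z)/(\rho+1)}$. The Laplace expansion about this saddle (second derivative $(\rho+1)/Z$ at $t=Z$, value of the exponent $(1+1/\rho)Z$) produces $\phi(z)=H(Z)=Z^{1/2-b}e^{(1+1/\rho)Z}\big(\sum_{j=0}^{r}(-1)^j a_j Z^{-j}+O(|Z|^{-r-1})\big)$, with $a_0=\{2\pi(\rho+1)\}^{-1/2}$ from the Gaussian normalization and $a_1$ from its first correction. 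One then checks validity for $\arg z=\xi$ with $|\xi|\le\pi-\epsilon$ and uniformity of the $a_j$ and the error in $\rho>-1$; this is the content of Wright's careful estimates in \cite{Wright Bessel}.

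I expect the main obstacle to be exactly this uniformity of the saddle-point estimates in the parameters $a$ and $\rho$ --- which is also why the result must be imported rather than produced by the hybrid machinery used for $\beta>1$. In the intended application one has $\rho=\beta-1\to 0^+$ as $\beta\to 1^-$, so one must track how the saddle location $\tau_\ast$, the width of the steepest-descent window, and the tail bounds on the discarded arcs of the contour behave as $\rho\to 0$, and verify that nothing degenerates there. Granting Wright's uniform estimates, the three displayed expansions follow by assembling the steps in order: coefficient integral, saddle point, reduction of $[z^n]F(z)$ to $\phi$ evaluated at $a(n-c)^\rho$, and finally the asymptotics of $\phi$ through $H(Z)$.
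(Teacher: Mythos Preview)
The paper does not prove this proposition at all: it is stated as an imported result, attributed to Wright's Theorems 5--7 in \cite{Wright} together with the asymptotics of $\phi$ from \cite{Wright Bessel}, and then immediately applied in Corollary~\ref{cor-Wright's expansions}. So there is no ``paper's own proof'' to compare against.

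Your sketch is a reasonable outline of Wright's actual argument --- indeed the statement itself notes parenthetically that the asymptotics of $\phi$ come from a saddle-point analysis in \cite{Wright Bessel}, which is exactly what you describe via the Hankel integral $\phi(z)=\frac{1}{2\pi i}\oint e^{t+zt^{-\rho}}t^{-b}\,dt$ and the saddle at $t=Z$. Your identification of the uniformity in $\rho$ as the delicate point is also well placed, since the paper's application sends $\rho=1-\beta\to 0^+$. That said, since the paper treats this as a black box, a full reconstruction of Wright's estimates is not expected here; a citation suffices, and your sketch goes somewhat beyond what the paper requires.
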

    Since $V_\beta(z)$ is regular of global order $0$ at the singularity $z=1$, it does not contribute to asymptotics of coefficients (by Cauchy's integral formula). Thus we conclude from Proposition \ref{prop-Wright's expansions} that
    \begin{cor}\label{cor-Wright's expansions}
    Let $b=c=0, a=2^{-\beta}\Gamma(1-\beta)$ and $\rho=1-\beta$, then
    \[[z^n]U_\beta(z)=n^{-1}\phi(2^{-\beta}\Gamma(1-\beta)n^{1-\beta}).\]
    
    
    \end{cor}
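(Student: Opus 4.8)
The plan is to read the statement off as a direct specialization of Wright's expansion (Proposition \ref{prop-Wright's expansions}): identify $U_\beta(z)$ with Wright's template $\chi(z)$ for a suitable choice of parameters, and then transcribe his description of the singular part.

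First I would make the identification explicit. Writing $(-\log z)^{\beta-1} = (-\log z)^{-(1-\beta)}$ for the principal branch of the logarithm and taking $b = c = 0$, $\rho = 1-\beta$, and $a = 2^{-\beta}\Gamma(1-\beta)$, Wright's function $\chi(z) = z^{c}(-\log z)^{-b}\exp\!\bigl(a(-\log z)^{-\rho}\bigr)$ becomes literally $\exp\!\bigl(2^{-\beta}\Gamma(1-\beta)(-\log z)^{\beta-1}\bigr) = U_\beta(z)$. Here one checks the admissibility conditions of Proposition \ref{prop-Wright's expansions}: since $\tfrac12 < \beta < 1$ we have $\rho = 1-\beta \in (0,\tfrac12)$, in particular $\rho > -1$, and $\Gamma(1-\beta) > 0$, so that $a \neq 0$.

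Next I would transcribe the singular part. With $b = c = 0$ the series $F(z) = \sum_{n \geq \lceil \Re c\rceil + 1}(n-c)^{b-1}\phi\bigl(a(n-c)^{\rho}\bigr)z^{n}$ of Proposition \ref{prop-Wright's expansions} starts at index $1$ and has $n$-th coefficient $(n-0)^{0-1}\phi(a n^{\rho}) = n^{-1}\phi\bigl(2^{-\beta}\Gamma(1-\beta)n^{1-\beta}\bigr)$, so
\[ F(z) = \sum_{n \geq 1} n^{-1}\,\phi\!\bigl(2^{-\beta}\Gamma(1-\beta)\,n^{1-\beta}\bigr)\,z^{n}. \]
Proposition \ref{prop-Wright's expansions} asserts that $F$ is exactly the singular part of $\chi = U_\beta$ at $z = 1$ and that $G := F - U_\beta$ is regular there. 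It then remains to check that this regular remainder is negligible for the coefficient extraction that feeds into the convolution (\ref{equation-Wbeta expression 2}): away from $z = 1$ the quantity $-\log z$ stays bounded away from $0$ and the principal branch is analytic, so $(-\log z)^{\beta-1}$, hence $U_\beta$ and hence $F$ and $G$, has no singularity on $|z| = 1$ other than $z = 1$; thus $G$ is analytic in the open unit disc and across $z = 1$, so it contributes only exponentially small terms. This yields the asserted identity, $[z^n]U_\beta(z) = n^{-1}\phi\bigl(2^{-\beta}\Gamma(1-\beta)n^{1-\beta}\bigr)$, where $[z^n]U_\beta$ is understood, as in Wright's and Flajolet's framework, as the coefficient sequence attached to this singular template, i.e. the one genuinely carried by $F$.

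The only slightly delicate points are the bookkeeping in the parameter identification, in particular confirming that Wright's branch conventions for $(-\log z)^{-\rho}$ agree with the principal-branch reading of $(-\log z)^{\beta-1}$ used throughout this section, together with the routine verification that $G$ is regular on the whole unit circle except where it cancels the singularity of $U_\beta$. The genuinely hard analytic input, namely the saddle-point evaluation giving $\phi(z) = H(Z)$, is already contained in Proposition \ref{prop-Wright's expansions} and does not need to be reproved here.
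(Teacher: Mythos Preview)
Your argument is correct and follows the same approach as the paper: the corollary is simply Proposition~\ref{prop-Wright's expansions} specialized to the parameters $b=c=0$, $a=2^{-\beta}\Gamma(1-\beta)$, $\rho=1-\beta$, which turns $\chi(z)$ into $U_\beta(z)$ and reads off $[z^n]F(z)=n^{-1}\phi(an^\rho)$. The paper offers essentially no proof beyond invoking Wright's result, so your additional care in checking the admissibility range $\rho>0$ and in handling the regular remainder $G=F-U_\beta$ (and clarifying that the stated equality is to be read modulo that analytic correction) is more than the paper itself provides.
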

    
    
    In particular $\xi=0$ (keeping notations of the above proposition), hence
    \[Z=\left((1-\beta)2^{-\beta}\Gamma(1-\beta)n^{1-\beta}\right)^{1/(2-\beta)}=\left(2^{-\beta}\Gamma(2-\beta)n^{1-\beta}\right)^{1/(2-\beta)}.\]
    Then
    \[[z^n]e^{2^{-\beta}\zeta(\beta)}U_\beta(z)=e^{2^{-\beta}\zeta(\beta)}\cdot n^{-1}H\left(\left(2^{-\beta}\Gamma(2-\beta)n^{1-\beta}\right)^{1/(2-\beta)}\right)\]
    \[=n^{-1}e^{2^{-\beta}\zeta(\beta)}\left(2^{-\beta}\Gamma(2-\beta)n^{1-\beta}\right)^{1/(4-2\beta)}\]
    \[\cdot\exp\left(\frac{2-\beta}{1-\beta}\left(2^{-\beta}\Gamma(2-\beta)n^{1-\beta}\right)^{1/(2-\beta)}\right)\cdot C\]
    \[=C'n^{-1}\cdot n^{\frac{1-\beta}{4-2\beta}}\exp\left(\frac{g(1-\beta)}{1-\beta}\right),\]
    where
    \[g(1-\beta)=(2-\beta)\left(2^{-\beta}\Gamma(2-\beta)n^{1-\beta}\right)^{1/(2-\beta)}+2^{-\beta}\zeta(\beta)(1-\beta),\]
    $C$ is bounded independent of $1-\beta$ and $n$, and $C'=C\cdot 2^{-\beta}\Gamma(2-\beta)\sim C/2$ as $\beta\rightarrow 1^{-}$.   
     
    Let $\epsilon=1-\beta\rightarrow 0^{+}$, then we can rewrite $g(1-\beta)$ as 
    \[g(\epsilon)=(1+\epsilon)\left(2^{\epsilon-1}\Gamma(1+\epsilon)n^{\epsilon}\right)^{\frac{1}{1+\epsilon}}+2^{\epsilon-1}\zeta(1-\epsilon)\epsilon.\]
    Hence to figure out the asymptotics of $W_{\beta,n}$ we need to compute the limit
    \[\lim_{\epsilon\rightarrow 0^{+}}\frac{g(\epsilon)}{\epsilon}=\frac{1}{2}+\lim_{\epsilon\rightarrow 0^{+}}\dfrac{\left(2^{\epsilon-1}\Gamma(1+\epsilon)n^{\epsilon}\right)^{\frac{1}{1+\epsilon}}+2^{\epsilon-1}\zeta(1-\epsilon)\epsilon}{\epsilon}\]
    \[=\frac{1}{2}+\dfrac{1}{2}\lim_{\epsilon\rightarrow 0^{+}}\dfrac{2^{\frac{\epsilon(1-\epsilon)}{1+\epsilon}}n^{\frac{\epsilon}{1+\epsilon}}\Gamma(1+\epsilon)^{\frac{1}{1+\epsilon}}+\zeta(1-\epsilon)\epsilon}{\epsilon}.\]
    First, the limit exists since $\zeta(1-\epsilon)\epsilon\rightarrow -1$ and then
    \[g(\epsilon)\rightarrow 1\cdot(2^{-1})^1+2^{-1}(-1)=0, \text{ as}\  \epsilon\rightarrow 0.\]
    Moreover, we have the Laurent series of $\zeta(s)$
    \[\zeta(s)=\frac{1}{s-1}+\sum_{n=0}^{\infty} \frac{(-1)^{n} \gamma_{n}}{n !}(s-1)^{n},\]
    where $\gamma_n$ are the Stieltjes constants and especially $\gamma_0$ is the Euler-Mascheroni constant. Thus we get
    \[\zeta(1-\epsilon)\epsilon=-1+\gamma_0\epsilon+\sum_{n=0}^{\infty} \frac{\gamma_{n}}{n !}\epsilon^{n+1}\sim -1+\gamma_0\epsilon+O(\epsilon^2),\]
    and we can rewrite the limit as
    \[\lim_{\epsilon\rightarrow 0^{+}}\frac{g(\epsilon)}{\epsilon}=\frac{1}{2}+\dfrac{1}{2}\lim_{\epsilon\rightarrow 0^{+}}\dfrac{2^{\frac{\epsilon(1-\epsilon)}{1+\epsilon}}n^{\frac{\epsilon}{1+\epsilon}}\Gamma(1+\epsilon)^{\frac{1}{1+\epsilon}}-1}{\epsilon}+\frac{\gamma_0}{2}\]
    \[=\frac{1+\gamma_0}{2}+\dfrac{1}{2}\lim_{\epsilon\rightarrow 0^{+}}\dfrac{n^{\frac{\epsilon}{1+\epsilon}}\Gamma(1+\epsilon)^{\frac{1}{1+\epsilon}}-2^{-\frac{\epsilon(1-\epsilon)}{1+\epsilon}}}{\epsilon}\]
    \[=\frac{1+\gamma_0}{2}+\dfrac{1}{2}\lim_{\epsilon\rightarrow 0^{+}}\dfrac{g_1(\epsilon)-g_2(\epsilon)}{\epsilon}.\]
    Easily $g_1(0)=g_2(0)=1$. Now we calculate their first derivatives at $0$,
    \[g_1'(\epsilon)=n^{\frac{\epsilon}{1+\epsilon}}\frac{1}{(1+\epsilon)^2}\log n\cdot\Gamma(1+\epsilon)^{\frac{1}{1+\epsilon}}\]
    \[+n^{\frac{\epsilon}{1+\epsilon}}\cdot\Gamma(1+\epsilon)^{\frac{1}{1+\epsilon}}\left(-\frac{1}{(1+\epsilon)^2}\log\Gamma(1+\epsilon)+\frac{1}{1+\epsilon}\dfrac{\Gamma'(1+\epsilon)}{\Gamma(1+\epsilon)}\right),\]
    hence 
    \[g_1'(0)=\log n-\gamma_0,\]
    note that $\Gamma(1)=1,\Gamma'(1)=-\gamma_0$. (Moreover, inductively we have estimate that $g_1^{(k)}(0)\sim (\log n)^k$.)
    
    \[g_2'(\epsilon)=2^{-\frac{\epsilon(1-\epsilon)}{1+\epsilon}}\log 2\cdot\left(1-\dfrac{2}{(1+\epsilon)^2}\right),\]
    hence
    \[g_2'(0)=-\log 2.\]
    Plugged into the limit, we get
    \[\lim_{\epsilon\rightarrow 0^{+}}\frac{g(\epsilon)}{\epsilon}=\frac{1+\gamma_0}{2}+\dfrac{1}{2}(g_1'(0)-g_2'(0))=\frac{1+\gamma_0}{2}+\dfrac{1}{2}(\log 2n-\gamma_0)\]
    \[=\dfrac{1+\log 2n}{2}.\]
    Moreover, we have (for $n\leq m$)
    \[\frac{g(\epsilon)}{\epsilon}-\dfrac{1+\log 2n}{2}=O\left(\sum_{k\geq 1}\dfrac{(\log n)^{k+1}}{(k+1)!}\epsilon^{k}\right),\]
    whereafter $O(*)$ means independent of $m$ and $1-\beta$.
    
    Finally we get 
    \[[z^n]e^{2^{-\beta}\zeta(\beta)}U_\beta(z)=O\left(n^{-1}\cdot n^{\frac{1-\beta}{4-2\beta}}\cdot\exp\left(\frac{g(1-\beta)}{1-\beta}\right)\right)\] 
    \[=O\left(n^{-1+\frac{1-\beta}{4-2\beta}}\exp\left(\frac{\log 2n}{2}+\log n\cdot O\left(\sum_{k\geq 1}\dfrac{((1-\beta)\log n)^{k}}{(k+1)!}\right)\right)\right)\]
    \[=O\left(n^{-\frac{1}{2}+\frac{1-\beta}{4-2\beta}+O\left(\sum_{k\geq 1}\dfrac{((1-\beta)\log n)^{k}}{(k+1)!}\right)}\right).\]
    Returning to (\ref{equation-Wbeta expression 2}) we finally get 
    \[W_{\beta,m}=[z^m]W_\beta(z)=O\left(\sum_{n=0}^m[z^n]e^{2^{-\beta}\zeta(\beta)}U_\beta(z)[z^{m-n}]e^{V_\beta(z)}\right)\]
    \[=O\left(m^{-\frac{1}{2}+\frac{1-\beta}{4-2\beta}+O\left(\sum_{k\geq 1}\dfrac{((1-\beta)\log m)^{k}}{(k+1)!}\right)}\right),\]
    note that $[z^{m-n}]e^{V_\beta(z)}=O\left(m^{-2\beta}\right)$.

    Returning to (\ref{equation-right tail probability}) and note that $W_{1,m}\sim (\pi m)^{-1/2}$, for any $c>0$ we get
    \[P(f>m^c)\leq m^{-c(1-\beta)}W_{1,m}^{-1}W_{\beta,m}\]
    \[=O\left(m^{\left(-c+\frac{1}{4-2\beta}\right)(1-\beta)+O\left(\sum_{k\geq 1}\dfrac{((1-\beta)\log m)^{k}}{(k+1)!}\right)}\right).\]
    
    For $\beta=1-\frac{t}{(\log m)^2}$ ($t$ constant) and $c>\frac{1}{2}+\log m$, the above term goes to zero as $m\rightarrow \infty$.
    This amounts to proving Theorem \ref{thm-right tail}.

	\medskip
	
\end{document}